\numberwithin{equation}{section}
\numberwithin{figure}{section}
\numberwithin{table}{section}
\newcommand{\lyxaddress}[1]{
\par {\raggedright #1
\vspace{1.4em}
\noindent\par}
}
\theoremstyle{plain}
\newtheorem{thm}{\protect\theoremname}
  \theoremstyle{plain}
  \newtheorem{lem}[thm]{\protect\lemmaname}
\newcommand {\norm} [1] { \lVert #1 \rVert}
\newcommand {\abs} [1] {\left| #1 \right|}
  \providecommand{\lemmaname}{Lemma}
\providecommand{\theoremname}{Theorem}
\begin{document}

\title{Prevalence of Delay Embeddings with a Fixed Observation Function}

\date{$\:$}

\author{Raymundo Navarrete and Divakar Viswanath }

\maketitle

\lyxaddress{Department of Mathematics, University of Michigan (raymundo/divakar@umich.edu). }
\begin{abstract}
Let $x_{j+1}=\phi(x_{j})$, $x_{j}\in\mathbb{R}^{d}$, be a dynamical
system with $\phi$ being a diffeomorphism. Although the state vector
$x_{j}$ is often unobservable, the dynamics can be recovered from
the delay vector $\left(o(x_{1}),\ldots,o(x_{D})\right)$, where $o$
is the scalar-valued observation function and $D$ is the embedding
dimension. The delay map is an embedding for generic $o$, and more
strongly, the embedding property is prevalent. We consider the situation
where the observation function is fixed at $o=\pi_{1}$, with $\pi_{1}$
being the projection to the first coordinate. However, we allow polynomial
perturbations to be applied directly to the diffeomorphism $\phi$,
thus mimicking the way dynamical systems are parametrized. We prove
that the delay map is an embedding with probability one with respect
to the perturbations. Our proof introduces a new technique for proving
prevalence using the concept of Lebesgue points.
\end{abstract}
\global\long\def\norm#1{\left|\left|#1\right|\right|}

\global\long\def\set#1{\left\{  #1\right\}  }

\global\long\def\abs#1{\left|#1\right|}

\global\long\def\falling#1#2{#1^{\underbar{\relsize{-2}\ensuremath{#2}}}}

\global\long\def\e{\mathbb{E}}

\global\long\def\p{\mathbb{P}}

\global\long\def\strr#1#2{\genfrac{\{}{\}}{0pt}{}{#1}{#2} }

\global\long\def\var{\mathrm{Var}}

\global\long\def\floor#1{\lfloor#1\rfloor}

\global\long\def\ceil#1{\lceil#1\rceil}

\section{Introduction}

Let $x_{j+1}=\phi(x_{j})$ be a dynamical system. If $o$ is a scalar
valued observation function, the delay map is given by 
\[
F_{0}(x_{1})=\left(o(x_{1}),\ldots,o(x_{D})\right).
\]
The question of when $F_{0}$ is an embedding was considered by Aeyels
\cite{Aeyels1980} and Takens \cite{Takens1981}. Suppose that $x_{j}\in\mathbb{R}^{n}$
but with the dynamics confined to an invariant submanifold of dimension
$d\leq n$. Alternatively, we may assume $x_{j}\in\mathfrak{m}$,
where $\mathfrak{m}$ is a manifold of dimension $d$. Based on an
analogy to Whitney embedding \cite{Hirsch1976}, we may expect $F_{0}$
to be an embedding for generic $o$ for embedding dimension $D\geq2d+1$.
Here genericity is with respect to the space of functions $o$ under
a $C^{r}$ topology with $r\geq2$ \cite{Hirsch1976}.

Sauer et al \cite{SauerYorkeCasdagli1991} introduced a new point
of view, supported by deep ideas, into the theory of delay embeddings.
If $x\in\mathbb{R}^{n}$ and $\alpha\in\mathbb{Z}_{0}^{n}$ is a multi-index,
denote the monomial $x^{\alpha}$ by $p_{\alpha}(x)$. Instead of
assuming the observation function $o$ to be any $C^{r}$ function,
Sauer et al take the observation function to be the sum of some fixed
function $o^{\ast}$ and a finite linear combination of the monomials
$p_{\alpha}(x)$. Proofs of genericity rely on ``bump'' functions
or $C^{\infty}$ functions with compact support. Although the device
of bump functions is of much utility in differential topology \cite{Hirsch1976},
bump functions hardly ever arise in applications. In contrast, physical
models often use polynomials. Thus, limiting the perturbations to
a finite linear combination of polynomials is a welcome shift in point
of view. 

A property is generic in a Baire space if it holds for a countable
intersection of open and dense sets. A generic set is always dense
but it may be of probability zero (in a reasonable sense). For example,
generic subsets of $[0,1]$ of probability zero may be constructed
easily. Thus, it may be questioned whether the concept of genericity
captures the notion of what is typical in applications.

Sauer et al \cite{SauerYorkeCasdagli1991} answered that question
by introducing the notion of prevalence. To say that delay embeddings
are prevalent is equivalent to saying that the delay map is an embedding
for almost every linear combination of polynomials. If probabilities
are defined by normalizing the Lebesgue measure, we may say that the
delay map is an embedding with probability one.

Suppose $x_{k}=\phi^{k}(x_{1})$ and $y_{k}=\phi^{k}(y_{1})$. For
$F_{0}$ to be an injection, we must have $F_{0}(x_{1})\neq F_{0}(y_{1})$
whenever $x_{1}\neq y_{1}$. A major difficulty in the proof of injectivity
arises in handling points $x_{1}\neq y_{1}$ but with overlapping
orbits. For example, we may have $y_{1}=x_{2}$ or $y_{1}=x_{3}$.
Related difficulties arise in handling periodic points and in the
proof of immersivity (an embedding must be injective as well as immersive).
Sauer et al \cite{SauerYorkeCasdagli1991} introduced several key
ideas for handling these difficulties. However, there is a minor gap
in their proof. In section 4, we fix that gap and show that earlier
mathematical treatments have serious deficiencies. Therefore, proofs
prior to Sauer et al cannot be accepted.

The proof of Sauer et al \cite{SauerYorkeCasdagli1991} is quite informal.
We give a more formally precise development of their ideas in sections
2 and 3. Later, we consider the case where the observation map is
fixed at $o=\pi_{1}$, with $\pi_{1}$ being the projection to the
first coordinate and with polynomial perturbations applied directly
to $\phi$. Ideas essential for the new developments are interspersed
in sections 2 and 3. Sauer et al include a filtering step applied
to the delay map in their main theorems. In addition to mathematical
informality, the filtering step makes the essential ideas difficult
to grasp and verify. Thus, the filtering step is omitted in section
4, where we derive their main results in a modified form.

From section 5 onwards, we treat the case where $o=\pi_{1}$ and $\phi$
itself is perturbed by polynomials. There are two main motivations
for considering this case. First, from a purely aesthetic point of
view, it is desirable to make the theory of delay embeddings depend
upon the dynamics and not the observation function. Second, the setting
with $o=\pi_{1}$ is pertinent to applications. For example, the most
natural way to extract a time series from a fluid flow is to simply
record the fluid velocity at a fixed point \cite{Takens1981}.

The main technical novelty in our approach is related to the concept
of Lebesgue points. Our delay embedding theorem for the $o=\pi_{1}$
case requires $D\geq4d+2$, although our earlier work \cite{NavarreteViswanath17}
suggests $D\geq2d+1$. In the concluding section, we express the hope
that the technique of Lebesgue points may prove useful in obtaining
prevalence versions of some classical results in dynamical systems
theory. In that regard, we mention the extensions of delay embedding
theory to PDE by Robinson \cite{Robinson2005,Robinson2011}. A more
complete account of other mathematical investigations in embedding
theory may be found in the introduction to our earlier work \cite{NavarreteViswanath17}.

\section{Transfer of volume}

A key idea in the work of Sauer et al \cite{SauerYorkeCasdagli1991}
is to transfer volumes from embedding space to parameter space. For
an example of what we mean by transfer of volume, suppose $A$ is
a square matrix. Then a volume equal to $\mathfrak{v}$ in the range
is transferred to $\mathfrak{v}/\det A$ in the domain.

Suppose $G:\mathbb{R}^{D_{\alpha}}\times\mathbb{R}^{\mathfrak{d}}\rightarrow\mathbb{R}^{D}$
is a $C^{r}$ function with $r\geq2$. Here $\mathbb{R}^{D_{\alpha}}$
is the space of parameters and we will denote a point in parameter
space by $\left(c_{\alpha}\right)$ or $c_{\alpha}$, with the understanding
that $\left(c_{\alpha}\right)$ (or $c_{\alpha}$)is a column vector.
The transfer of volume is carried out with fixed $\mathfrak{z}\in\mathbb{R}^{\mathfrak{d}}$.
Thus, the dependence of $G\left(c_{\alpha},\mathfrak{z}\right)$ on
$\mathfrak{z}$, which will be nonlinear, does not come up in the
transfer of volume argument. When the map $\phi$ is fixed and only
the observation function is parametrized, $G$ is linear in the parameters
$c_{\alpha}$. The embedding space is $\mathbb{R}^{D}$ and the dimension
$D$ of this space is of much importance. The rank of $G$ is mainly
constrained by $D$ because $D_{\alpha}\gg D$, and the rank determines
how much volume (or how little, with lesser the better) is transferred
from embedding space to parameter space.

In the following lemma and later we refer to $\mu(B_{1}\cap B_{2})/\mu(B_{2})$,
where $\mu(\cdot)$ is the Lebesgue measure, as the probability of
$B_{1}$ relative to $B_{2}$ (both sets are assumed to be measurable).
Measure will always refer to Lebesgue measure. The following lemma
transfers the volume of a ball of radius $L\epsilon$ in $\mathbb{R}^{D}$
to parameter space. All norms in this paper are spectral or $L^{2}$
norms.
\begin{lem}[\cite{SauerYorkeCasdagli1991}]
 Let $\mathfrak{g}\left(c_{\alpha}\right)=A\left(c_{\alpha}\right)+\mathfrak{g}_{0}$
be a linear (affine) map from $\mathbb{R}^{D_{\alpha}}$ to $\mathbb{R}^{D}$,
with $A$ being a $D\times D_{\alpha}$ matrix. Suppose the first
$\mathfrak{r}$ singular values of $A$ are at least as great as $\sigma>0$.
Then the measure of the set 
\begin{equation}
\set{c_{\alpha}\Bigl|\norm{A(c_{\alpha})+\mathfrak{g}_{0}}\leq L\epsilon}\cap\set{c_{\alpha}\Bigl|\norm{c_{\alpha}}\leq a}\label{eq:lem1-1}
\end{equation}
is less than or equal to 
\begin{equation}
2^{D_{\alpha}}L^{\mathfrak{r}}\epsilon^{\mathfrak{r}}a^{D_{\alpha}-\mathfrak{r}}\Bigl/\sigma^{\mathfrak{r}},\label{eq:lem1-2}
\end{equation}
and the probability of $\norm{A\left(c_{\alpha}\right)+\mathfrak{g}_{0}}\leq L\epsilon$
relative to $\norm{c_{\alpha}}\leq a$ is less than or equal to 
\[
D_{\alpha}!L^{\mathfrak{r}}\epsilon^{\mathfrak{r}}\bigl/\sigma^{\mathfrak{r}}a^{\mathfrak{r}}.
\]
\label{lem:1-transfer-of-volume}\end{lem}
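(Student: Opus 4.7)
The plan is to use the singular value decomposition to diagonalize $A$, reducing the intersection in (\ref{eq:lem1-1}) to a product set whose measure can be read off directly.

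First I would write $A=U\Sigma V^{T}$ with $U$ orthogonal on $\mathbb{R}^{D}$, $V$ orthogonal on $\mathbb{R}^{D_{\alpha}}$, and $\Sigma$ the $D\times D_{\alpha}$ diagonal matrix of singular values $\sigma_{1}\geq\sigma_{2}\geq\cdots$, so that by hypothesis $\sigma_{1},\ldots,\sigma_{\mathfrak{r}}\geq\sigma$. Because orthogonal transformations preserve both Lebesgue measure and the Euclidean norm, the change of variables $\tilde{c}=V^{T}c_{\alpha}$ turns the set (\ref{eq:lem1-1}) into
\[
\Bigl\{\tilde{c}\,\Bigl|\,\norm{\Sigma\tilde{c}+U^{T}\mathfrak{g}_{0}}\leq L\epsilon\Bigr\}\cap\Bigl\{\tilde{c}\,\Bigl|\,\norm{\tilde{c}}\leq a\Bigr\}
\]
without changing the measure. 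Splitting $\tilde{c}=(y_{1},y_{2})$ with $y_{1}\in\mathbb{R}^{\mathfrak{r}}$ and $y_{2}\in\mathbb{R}^{D_{\alpha}-\mathfrak{r}}$, the norm constraint on $\Sigma\tilde{c}+U^{T}\mathfrak{g}_{0}$ forces $\abs{\sigma_{j}\tilde{c}_{j}+(U^{T}\mathfrak{g}_{0})_{j}}\leq L\epsilon$ coordinatewise for $j=1,\ldots,\mathfrak{r}$, so each such $\tilde{c}_{j}$ lies in an interval of length at most $2L\epsilon/\sigma_{j}\leq 2L\epsilon/\sigma$. No restriction from the first set applies to $y_{2}$, and the ball condition gives $\abs{\tilde{c}_{j}}\leq a$ for $j>\mathfrak{r}$. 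Enclosing the intersection in the product box yields the bound
\[
\Bigl(\tfrac{2L\epsilon}{\sigma}\Bigr)^{\mathfrak{r}}(2a)^{D_{\alpha}-\mathfrak{r}}=\frac{2^{D_{\alpha}}L^{\mathfrak{r}}\epsilon^{\mathfrak{r}}a^{D_{\alpha}-\mathfrak{r}}}{\sigma^{\mathfrak{r}}},
\]
which is exactly (\ref{eq:lem1-2}).

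For the relative probability bound, I would divide by the Lebesgue measure of the ball $\norm{c_{\alpha}}\leq a$, which equals $V_{D_{\alpha}}a^{D_{\alpha}}$ where $V_{D_{\alpha}}$ is the unit ball volume in $\mathbb{R}^{D_{\alpha}}$. The key elementary fact is that this ball contains the cross-polytope with vertices at $\pm a e_{j}$, whose volume is $(2a)^{D_{\alpha}}/D_{\alpha}!$; hence $V_{D_{\alpha}}\geq 2^{D_{\alpha}}/D_{\alpha}!$. Dividing (\ref{eq:lem1-2}) by $V_{D_{\alpha}}a^{D_{\alpha}}$ and using this lower bound collapses $2^{D_{\alpha}}/V_{D_{\alpha}}$ into $D_{\alpha}!$, yielding the stated probability bound $D_{\alpha}!L^{\mathfrak{r}}\epsilon^{\mathfrak{r}}/\sigma^{\mathfrak{r}}a^{\mathfrak{r}}$.

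There is essentially no hard step here; the proof is a bookkeeping exercise once the SVD aligns the coordinate axes with the directions of large and small (or zero) singular values. The only point requiring mild care is the lower bound for $V_{D_{\alpha}}$ used to convert the measure estimate into a probability; using the inscribed cross-polytope gives the cleanest constant and matches the $D_{\alpha}!$ in the target bound without invoking Stirling or Gamma-function estimates.
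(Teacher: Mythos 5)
Your proof is correct and follows essentially the same route as the paper: both pass to the singular-vector bases (yours via an explicit change of variables $\tilde{c}=V^{T}c_{\alpha}$, the paper's by expanding $c_{\alpha}$ and $\mathfrak{g}_{0}$ in the right and left singular vectors), bound $\mathfrak{r}$ coordinates by intervals of length $2L\epsilon/\sigma$ and the rest by $2a$, and then divide by the ball volume. The one small point where you diverge is in establishing $V_{D_{\alpha}}\geq 2^{D_{\alpha}}/D_{\alpha}!$: the paper invokes the Gamma-function formula for the ball volume and states the inequality, while your inscribed cross-polytope argument is a cleaner, entirely elementary derivation of the same bound.
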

\begin{proof}
Suppose $\mathfrak{u}_{1},\ldots,\mathfrak{u}_{D_{\alpha}}$are the
right singular vectors, $\mathfrak{v}_{1},\ldots,\mathfrak{v}_{D}$
are the left singular vectors, and $\sigma_{1},\ldots,\sigma_{D_{\alpha}}$
the singular values of $A$. (see \cite{TrefethenBau1997}). Let $\left(c_{\alpha}\right)=\sum_{i=1}^{D_{\alpha}}\mathfrak{c}_{i}\mathfrak{u}_{i}$
and $\mathfrak{g_{0}}=\sum_{i=1}^{D}\mathfrak{g}_{i}\mathfrak{v}_{i}$.

For $i=1,\ldots,\mathfrak{r}$, $\norm{A(c_{\alpha})+\mathfrak{g}_{0}}\leq L\epsilon$
implies that $\abs{\sigma_{i}\mathcal{\mathfrak{c}}_{i}+\mathfrak{g}_{i}}\leq L\epsilon$
and therefore $\abs{\mathfrak{c}_{i}+\mathfrak{g}_{i}/\sigma_{i}}\leq L\epsilon/\sigma_{i}\leq L\epsilon/\sigma$.
Thus, the coefficient $\mathfrak{c}_{i}$ must lie in an interval
of measure less than $2L\epsilon/\sigma$ for $i=1,\ldots,\mathfrak{r}$.

For $i=\mathfrak{r}+1,\ldots,D_{\alpha}$, $\norm{c_{\alpha}}\leq a$
implies that $\mathfrak{c}_{i}$ must vary inside the interval $[-a,a]$,
whose length is $2a$.

Therefore, the volume of the set (\ref{eq:lem1-1}) is bounded above
by $(2L\epsilon/\sigma)^{\mathfrak{r}}(2a)^{D_{\alpha}-\mathfrak{r}}$,
which simplifies to (\ref{eq:lem1-2}).

For the statement about the probability of $\norm{A\left(c_{\alpha}\right)+\mathfrak{g}_{0}}\leq L\epsilon$
relative to $\norm{c_{\alpha}}\leq a$, we divide (\ref{eq:lem1-2})
by $\gamma a^{D_{\alpha}}$, where $\gamma$ is the volume of the
unit sphere in $\mathbb{R}^{D_{\alpha}}$, to obtain 
\[
2^{D_{\alpha}}L^{\mathfrak{r}}\epsilon^{\mathfrak{r}}\Bigl/\gamma\sigma^{\mathfrak{r}}a^{\mathfrak{r}}.
\]
The proof is completed using $\gamma=\pi^{D_{\alpha}/2}\bigl/\Gamma\left(D_{\alpha}/2+1\right)\geq2^{D_{\alpha}}\bigl/D_{\alpha}!$.
\end{proof}
Lemma \ref{lem:1-transfer-of-volume} shows how a volume $\norm{\mathfrak{g}(c_{\alpha})}\leq L\epsilon$
in embedding space is transferred to a probability relative to $\norm{c_{\alpha}}\leq a$
in parameter space. The transferred probability is proportional to
$\epsilon^{\mathfrak{r}}$, and therefore, as the rank $\mathfrak{r}$
increases, the probability becomes smaller.

To obtain prevalence with the observation function fixed and the map
parametrized, we will rely on the following nonlinear transfer of
volume lemma. When the previous Lemma \ref{lem:1-transfer-of-volume}
is applied, $L$ will be a Lipshitz constant. When the following lemma
is applied, $L$ will be a Lipshitz constant as well as a bound on
the quadratic remainder term in a Taylor series.
\begin{lem}
Suppose $\mathfrak{g}:\mathbb{R}^{D_{\alpha}}\rightarrow\mathbb{R}^{D}$
is a $C^{2}$ function, with the Taylor series $\mathfrak{g}\left(c_{\alpha}\right)=\mathfrak{g}_{0}+A\left(c_{\alpha}\right)+\mathfrak{h}\left(c_{\alpha}\right)$.
We assume that both $\mathfrak{g}(\cdot)$ and $\mathfrak{h}(.)$
are defined for $\norm{c_{\alpha}}\leq a$ and that $\norm{\mathfrak{h}\left(c_{\alpha}\right)}\leq L\norm{c_{\alpha}}^{2}$.
We also assume that the first $\mathfrak{r}$ singular values of $A$
are at least as great as $\sigma>0$. Then the probability of $\norm{\mathfrak{g}\left(c_{\alpha}\right)}\leq L\epsilon$
relative to $\norm{c_{\alpha}}\leq\epsilon^{1/2}$ is less than or
equal to 
\[
D_{\alpha}!2^{\mathfrak{r}}L^{\mathfrak{r}}\epsilon^{\mathfrak{r}/2}\bigl/\sigma^{\mathfrak{r}}
\]
for $0<\epsilon^{1/2}\leq a$.\label{lem:2-transfer-of-volume-nonlinear}\end{lem}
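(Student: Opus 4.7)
The plan is to reduce the nonlinear statement to the linear case already established in Lemma \ref{lem:1-transfer-of-volume}. The radius $\epsilon^{1/2}$ in the parameter ball is chosen precisely so that the quadratic remainder $\mathfrak{h}$ is of the same order as the linear bound $L\epsilon$ on $\mathfrak{g}$, which is what will allow a clean triangle inequality to swap the nonlinear event for an affine one.

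First I would observe that on the ball $\norm{c_{\alpha}}\leq\epsilon^{1/2}$ the hypothesis $\norm{\mathfrak{h}(c_{\alpha})}\leq L\norm{c_{\alpha}}^{2}$ immediately gives $\norm{\mathfrak{h}(c_{\alpha})}\leq L\epsilon$. Next, writing $A(c_{\alpha})+\mathfrak{g}_{0}=\mathfrak{g}(c_{\alpha})-\mathfrak{h}(c_{\alpha})$ and applying the triangle inequality, the event $\norm{\mathfrak{g}(c_{\alpha})}\leq L\epsilon$ intersected with $\norm{c_{\alpha}}\leq\epsilon^{1/2}$ is contained in the affine event
\[
\bigl\{c_{\alpha}\bigm|\norm{A(c_{\alpha})+\mathfrak{g}_{0}}\leq 2L\epsilon\bigr\}\cap\bigl\{c_{\alpha}\bigm|\norm{c_{\alpha}}\leq\epsilon^{1/2}\bigr\}.
\]
The constraint $0<\epsilon^{1/2}\leq a$ ensures that $\mathfrak{g}$ and $\mathfrak{h}$ are defined throughout this ball, so the containment is valid.

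Then I would invoke Lemma \ref{lem:1-transfer-of-volume} with $L$ replaced by $2L$ and $a$ replaced by $\epsilon^{1/2}$. The relative probability bound supplied by that lemma becomes
\[
\frac{D_{\alpha}!\,(2L)^{\mathfrak{r}}\epsilon^{\mathfrak{r}}}{\sigma^{\mathfrak{r}}(\epsilon^{1/2})^{\mathfrak{r}}}=\frac{D_{\alpha}!\,2^{\mathfrak{r}}L^{\mathfrak{r}}\epsilon^{\mathfrak{r}/2}}{\sigma^{\mathfrak{r}}},
\]
which is exactly the claimed estimate. Because the nonlinear event is contained in the affine event inside the same reference ball $\norm{c_{\alpha}}\leq\epsilon^{1/2}$, the relative probabilities obey the same inequality, and no further renormalization is required.

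There is no real obstacle here; the only subtle point is the matching of scales. The bound $\norm{\mathfrak{h}}\leq L\norm{c_{\alpha}}^{2}$ is $O(\epsilon)$ precisely on a ball of radius $\epsilon^{1/2}$, so this is the largest reference ball on which the remainder can be absorbed into the linear event by a constant factor. Restricting to a smaller ball would hurt the probability denominator in Lemma \ref{lem:1-transfer-of-volume} without a compensating gain, while a larger ball would leave the remainder too big to control. The resulting exponent $\epsilon^{\mathfrak{r}/2}$ (half of the linear $\epsilon^{\mathfrak{r}}$) is the unavoidable cost of paying for the quadratic perturbation.
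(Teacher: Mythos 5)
Your proof is correct and takes essentially the same route as the paper: bound the quadratic remainder by $L\epsilon$ on the ball of radius $\epsilon^{1/2}$, use the triangle inequality to replace the nonlinear event by the affine event $\norm{A(c_{\alpha})+\mathfrak{g}_{0}}\leq 2L\epsilon$, and then invoke Lemma \ref{lem:1-transfer-of-volume} with $L\leftarrow 2L$ and $a\leftarrow\epsilon^{1/2}$. The paper's proof is just a terser version of the same argument.
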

\begin{proof}
If $\epsilon^{1/2}\leq a$ and $\norm{c_{\alpha}}\leq\epsilon^{1/2}$,
then $\norm{\mathfrak{h}\left(c_{\alpha}\right)}\leq L\epsilon$.
Therefore, $\norm{A\left(c_{\alpha}\right)+\mathfrak{g}_{0}}\leq2L\epsilon$.
The proof is completed by applying the previous lemma with $L\leftarrow2L$
and $a\leftarrow\epsilon^{1/2}$. 
\end{proof}
Applications of Lemmas \ref{lem:1-transfer-of-volume} and \ref{lem:2-transfer-of-volume-nonlinear}
will require us to get a handle on singular values. We will turn to
that in the next section. Before doing so, we recapitulate an elegant
argument of Sauer et al \cite{SauerYorkeCasdagli1991}. This argument,
although elementary, gives a good idea of the general approach when
the observation function is parametrized. 

Suppose $K$ is a smooth sub-manifold or even a fractal set of box
counting dimension $d$ and with compact closure that is a subset
of $\mathbb{R}^{n}$. Let the embedding dimension be $D>2d$. If $d\in\mathbb{Z}^{+}$,
we can take $D=2d+1$ as in Whitney's embedding theorem \cite{Hirsch1976}.
The following assumptions are made about the constant $C_{K}$:
\begin{description}
\item [{Assumption}] about $C_{K}$ (1): The set $K$ can be covered with
$C_{K}/\epsilon^{d}$ $\epsilon$-balls for any $\epsilon>0$.
\item [{Assumption}] about $C_{K}$ (2): The set $K\times K$ can be covered
with $C_{K}/\epsilon^{2d}$ $\epsilon$-balls for any $\epsilon>0$.
\end{description}
All balls are spherical.

A linear map from $\mathbb{R}^{n}$ to $\mathbb{R}^{D}$ can be written
as $F_{\alpha}(x)=\sum_{\alpha\in\mathcal{I}}c_{\alpha}\mathfrak{m}_{\alpha}x$,
where $\mathcal{I}$ is the index set $(i,j)$, $1\leq i\leq D$ and
$1\leq j\leq n$, and $\mathfrak{m}_{\alpha}$ is the matrix with
$1$ in the $i,j$th position if $\alpha=(i,j)$ and zero everywhere
else. Here $D_{\alpha}=nD$. We use $c_{\alpha}$ both to refer to
an entry of the vector $\left(c_{\alpha}\right)$ as in the definition
of $F_{\alpha}$ and to the vector as a whole as in $\norm{c_{\alpha}}$.
The slight ambiguity, which is resolved from context, is highly convenient.
In most instances, $c_{\alpha}$ refers to the vector as a whole. 

Define $G_{\alpha}(x,y)=F_{\alpha}(x)-F_{\alpha}(y)$. Assume $\norm{c_{\alpha}}\leq a_{0}$.
By compactness of the ball $\norm{c_{\alpha}}\leq a_{0}$, we may
assume the Lipshitz constant of $G_{\alpha}(x,y)$ (with respect to
$x,y$) to be bounded above by $L$. Define $\mathcal{K}(\delta)$
to be the set of all points $(x,y)\in K\times K$ satisfying $\norm{x-y}\geq\delta>0$.
Cover $\mathcal{K}(\delta)$ using $C_{K}/\epsilon^{2d}$ balls. Suppose
$G_{\alpha}(x,y)=0$ for some $(x,y)\in\mathcal{K}(\delta)$. Then
by the Lipshitz bound, we must have $\norm{G_{\alpha}(x,y)}\leq L\epsilon$
for $(x,y)$ that is a center of one of the $C_{K}/\epsilon^{2d}$
covering $\mathcal{K}(\delta)$.

The rest of the argument hinges on transferring the volume $\norm{G_{\alpha}(x,y)}\leq L\epsilon$
to parameter space. To do so, write $G_{\alpha}(x,y)$ in the form
\[
\left(\begin{array}{ccc}
\mathfrak{m}_{1,1}(x-y), & \mathfrak{m}_{1,2}(x-y), & \ldots\end{array}\right)(c_{\alpha})
\]
and observe that every column in the resulting matrix is in $\mathbb{R}^{D}$
and is all zeros except for a single entry equal to $\pi_{i}x-\pi_{i}y$,
where $\pi_{i}$ denotes the projection to the $i$th coordinate,
for some $i\in\set{1,\ldots,n}$. The first $D$ singular values of
that matrix are all equal to $\norm{x-y}\geq\delta$. Thus, we may
transfer volumes using Lemma \ref{lem:1-transfer-of-volume} and assert
that the probability of $G_{\alpha}(x,y)=0$ for some $(x,y)\in\mathcal{K}(\delta)$
relative to $\norm{c_{\alpha}}\leq a_{0}$ is at most 
\[
\frac{C_{K}}{\epsilon^{2d}}\times\frac{(nD)!L^{D}\epsilon^{D}}{\delta^{D}a_{0}^{D}}.
\]
By taking the limit $\epsilon\rightarrow0$ and because $D>2d$, it
follows that $G_{\alpha}(x,y)=0$ for some $(x,y)\in\mathcal{K}(\delta)$
only for a set of $c_{\alpha}$ of probability zero relative to the
ball $\norm{c_{\alpha}}\leq a_{0}$. By taking the union of the probability
zero sets with $\delta=1,\frac{1}{2},\frac{1}{2^{2}},\ldots$, we
may conclude that $G_{\alpha}(x,y)=0$ for some $(x,y)\in K\times K$,
$x\neq y$, only for a set of $c_{\alpha}$ of probability zero relative
to $\norm{c_{\alpha}}\leq a_{0}$. Equivalently, $x\rightarrow F_{\alpha}(x)$
is injective for $x\in K$ with probability one relative to the ball
$\norm{c_{\alpha}}\leq a_{0}$ in parameter space.

The argument derives its power by simply refining the cover of $\mathcal{K}(\delta)$
by using smaller and smaller $\epsilon$-balls. If $dF_{\alpha}(x,v)$
is the tangent map at $x$ applied to the tangent vector $v$, then
$dF_{\alpha}(x,v)=F_{\alpha}(v)$ because of the linearity of $F_{\alpha}(x)$
in $x$. If $K$ is a submanifold then $T_{1}K$ is the unit tangent
bundle consisting of points $(x,v)$ with $\norm v=1$. Injectivity
may be proved by considering $dF_{\alpha}(x,v)$ instead of $G_{\alpha}(x,y)$,
with Lemma \ref{lem:1-transfer-of-volume} invoked with $\sigma\leftarrow1$.

\section{Rank lemmas}

In proving a version of the Whitney embedding theorem, the argument
of Sauer et al \cite{SauerYorkeCasdagli1991} reviewed above writes
$G_{\alpha}(x,y)=\mathcal{M}.c_{\alpha}$ and relies on explicit knowledge
of singular values of $\mathcal{M}$. In general, singular values
of $\mathcal{M}$ cannot be obtained so explicitly. Instead, the approach
is to first argue that $\mathcal{M}$ has rank $D$ or greater for
every $(x,y)\in\mathcal{K}(\delta)$ and then observe that 
\[
\sigma_{\delta}=\min_{(x,y)\in\mathcal{K}(\delta)}\sigma_{D}\left(\mathcal{M}\right)>0
\]
because the $D$th singular value $\sigma_{D}(\mathcal{M})$ is continuous
in $x,y$ and $\mathcal{K}(\delta)$ is compact. The argument may
then be completed by applying Lemma \ref{lem:1-transfer-of-volume}
with $\sigma\leftarrow\sigma_{\delta}$.

To support such an argument, we give a few rank lemmas in this section.
The first two lemmas are from \cite{SauerYorkeCasdagli1991}. Rank
lemmas of this type are known in multivariate approximation theory
\cite{Madych2006}, although they are buried inside more sophisticated
results.

Suppose $z\in\mathbb{R}^{d}$. As noted already, the projection to
the $i$th coordinate is denoted by $\pi_{i}$. If $\alpha=(\alpha_{1},\ldots,\alpha_{d})$,
$\alpha_{i}\in\mathbb{Z}^{+}\cup\set 0$, is a multi-index, then $z^{\alpha}=\prod_{i=1}^{d}\left(\pi_{i}z\right)^{\alpha_{i}}$
as usual and $\abs{\alpha}=\sum_{i=1}^{d}\abs{\alpha_{i}}$. In later
arguments, it is essential to take the gradient of $z^{\alpha}$ with
respect to $z$. For notational convenience, we always denote $z^{\alpha}$
by $p_{\alpha}(z)$. The index set $\mathcal{I}_{D^{+}}$ is the set
of all $\alpha$ such that $\abs{\alpha}\leq D^{+}$. By elementary
combinatorics, the cardinality of $\mathcal{I}_{D^{+}}$ is $\binom{d+D^{+}}{D^{+}}$.

Suppose $z_{1},z_{2},\ldots,z_{D'}$ are distinct points in $\mathbb{R}^{d}$.
Then 
\begin{equation}
\left(\begin{array}{c}
p_{\alpha}(z_{1})\\
\vdots\\
p_{\alpha}(z_{D'})
\end{array}\right)\label{eq:vandermonde}
\end{equation}
denotes the multivariate Vandermonde matrix with the column index
$\alpha\in\mathcal{I}_{D^{+}}$ for some $D^{+}$. The dimension of
the matrix is $D'\times\abs{\mathcal{I}_{D^{+}}}$, where $\abs{\mathcal{I}_{D^{+}}}$
is the cardinality of $\mathcal{I}_{D^{+}}$.
\begin{lem}[\cite{SauerYorkeCasdagli1991}]
 For $\alpha\in\mathcal{I}_{D^{+}}$ and $D^{+}\geq D'-1$, the rank
of the Vandermonde matrix (\ref{eq:vandermonde}) is equal to the
number of its rows.\label{lem:3-vandermonde}\end{lem}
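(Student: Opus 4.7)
The plan is to translate the rank statement into a multivariate Lagrange interpolation problem. Since the matrix in (\ref{eq:vandermonde}) has $D'$ rows, proving its rank equals $D'$ is equivalent to showing that its column space (a subspace of $\mathbb{R}^{D'}$) is all of $\mathbb{R}^{D'}$. A column vector, indexed by $\alpha\in\mathcal{I}_{D^+}$, has entries $p_\alpha(z_1),\ldots,p_\alpha(z_{D'})$. A linear combination of columns with coefficients $c_\alpha$ produces the vector $(p(z_1),\ldots,p(z_{D'}))$, where $p=\sum_\alpha c_\alpha p_\alpha$ is an arbitrary polynomial of total degree at most $D^+$. Thus it suffices to exhibit, for each $i\in\{1,\ldots,D'\}$, a polynomial $q_i$ of total degree at most $D^+$ with $q_i(z_i)=1$ and $q_i(z_j)=0$ for $j\neq i$; the associated column combinations then recover all standard basis vectors $e_i\in\mathbb{R}^{D'}$.

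To construct $q_i$, I would build it as a product of $D'-1$ affine linear factors, one for each $j\neq i$. Since $z_i\neq z_j$, there is at least one coordinate direction in which they differ, so I can choose an affine linear function $\ell_{ij}:\mathbb{R}^d\to\mathbb{R}$ with $\ell_{ij}(z_j)=0$ and $\ell_{ij}(z_i)\neq 0$ (for instance $\ell_{ij}(z)=\pi_k z - \pi_k z_j$ for a coordinate $k$ on which $z_i$ and $z_j$ disagree). Then
\[
\tilde{q}_i(z) \;=\; \prod_{j\neq i} \ell_{ij}(z)
\]
is a polynomial of total degree exactly $D'-1$, vanishes at every $z_j$ with $j\neq i$, and is nonzero at $z_i$. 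Normalizing gives $q_i=\tilde{q}_i/\tilde{q}_i(z_i)$, which is the desired Lagrange-type basis polynomial.

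Finally, because $\deg q_i \leq D'-1 \leq D^+$ by hypothesis, $q_i$ lies in the span of $\{p_\alpha : \alpha\in\mathcal{I}_{D^+}\}$, so each $e_i$ lies in the column space of the Vandermonde matrix. Hence the column space is all of $\mathbb{R}^{D'}$ and the rank equals $D'$, which is the number of rows.

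I do not expect a real obstacle: the argument is elementary multivariate Lagrange interpolation, and the only ingredient is the trivial fact that distinct points in $\mathbb{R}^d$ can always be separated by an affine linear functional depending on a single coordinate. The role of the hypothesis $D^+\geq D'-1$ is precisely to guarantee that the degree-$(D'-1)$ product $\tilde q_i$ fits into the column index set $\mathcal{I}_{D^+}$; without it, the interpolating polynomial could fail to lie in the span and the conclusion would no longer follow from this construction.
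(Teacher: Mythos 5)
Your proof is correct, but it follows a genuinely different route from the paper. The paper first applies a generic orthogonal transformation $Q$ (drawn from Haar measure) so that the scalar projections $\pi_{1}Qz_{1},\ldots,\pi_{1}Qz_{D'}$ become distinct, and then reduces the problem to classical univariate Lagrange interpolation in the single rotated coordinate; the resulting univariate polynomial of degree $D'-1$ pulls back to a multivariate polynomial of the same total degree. You instead construct the multivariate Lagrange basis directly: for each $i$ you form $q_{i}$ as a product of $D'-1$ affine factors $\ell_{ij}$, each chosen on a single coordinate where $z_{i}$ and $z_{j}$ disagree, so that $q_{i}(z_{j})=\delta_{ij}$ after normalization. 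Your version is more elementary and fully explicit, avoiding any appeal to a measure-zero exceptional set of rotations. What the paper's rotation device buys is uniformity of technique: the same generic-$Q$ trick is reused almost verbatim in the subsequent Hermite-type rank lemmas (Lemmas \ref{lem:4-incomplete-hermite} and \ref{lem:5-complete-Hermite}), where one must prescribe gradients as well as values and a simple product of affine separating factors would not directly give the needed control over $\nabla p$.
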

\begin{proof}
Following \cite{SauerYorkeCasdagli1991}, let $Q$ be a $d\times d$
orthogonal matrix drawn from the Haar measure. If $z_{1}$ and $z_{2}$
are distinct, then $\pi_{i}z_{1}\neq\pi_{i}z_{2}$ for any $i$ for
$Q$ outside a set of measure $0$. Therefore, we can find a $Q$
such that $\pi_{1}Qz_{1},\ldots,\pi_{1}Qz_{D'}$ are distinct. We
may interpolate arbitrary values at $z_{j}$ using a univariate polynomial
$\mathfrak{p}(\pi_{1}Qz)$ of degree $D'-1$. Because we can write
\[
\mathfrak{p}(\pi_{1}Qz)=\left(\begin{array}{c}
p_{\alpha}(z_{1})\\
\vdots\\
p_{\alpha}(z_{D'})
\end{array}\right)(c_{\alpha})
\]
for a suitable choice of $c_{\alpha}$, it follows that the rank of
(\ref{eq:vandermonde}) is equal to the number of its rows.
\end{proof}
Let 

\begin{equation}
\left(\begin{array}{c}
\nabla p_{\alpha}(z_{1})\\
\vdots\\
\nabla p_{\alpha}(z_{D'})
\end{array}\right)\label{eq:incomplete-Hermite}
\end{equation}
be the multivariate incomplete Hermite matrix at $z_{1},\ldots,z_{D'}$
and with $\alpha\in\mathcal{I}_{D^{+}}$. 
\begin{lem}[\cite{SauerYorkeCasdagli1991}]
 The rank of the incomplete Hermite matrix (\ref{eq:incomplete-Hermite})
is equal to the number of its rows if $D^{+}\geq D'$.\label{lem:4-incomplete-hermite}\end{lem}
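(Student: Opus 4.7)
The plan is to show the incomplete Hermite matrix has full row rank $D'd$. By linear algebra this is equivalent to the assertion that for any target gradients $v_{1},\ldots,v_{D'}\in\mathbb{R}^{d}$ there exists a polynomial $p(z)=\sum_{\alpha}c_{\alpha}p_{\alpha}(z)$ with $\alpha\in\mathcal{I}_{D^{+}}$ satisfying $\nabla p(z_{j})=v_{j}$ for every $j$. I will construct such a $p$ explicitly, extending the Vandermonde-interpolation construction of Lemma~\ref{lem:3-vandermonde} to cover first-order derivative data.

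First, as in that earlier proof, I will pick a Haar-random orthogonal $Q$ so that the numbers $t_{j}=\pi_{1}Qz_{j}$ are pairwise distinct for $j=1,\ldots,D'$. The substitution $z\mapsto Qz$ preserves total degree, and by the chain rule $\nabla(p\circ Q)(z_{j})=Q^{\top}\nabla p(Qz_{j})$, so prescribing arbitrary gradient values at the $z_{j}$ is equivalent to prescribing arbitrary gradient values at the rotated points $w_{j}=Qz_{j}$. Hence I may assume without loss of generality that the first coordinates $t_{j}=z_{j,1}$ of the original points are already distinct.

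Next, I will try the ansatz
\[
p(z)=q_{0}(z_{1})+\sum_{i=2}^{d}z_{i}\,q_{i}(z_{1})
\]
with univariate polynomials $q_{0},q_{2},\ldots,q_{d}$. Direct differentiation gives $\partial p/\partial z_{i}(z_{j})=q_{i}(t_{j})$ for $i\geq 2$ and $\partial p/\partial z_{1}(z_{j})=q_{0}'(t_{j})+\sum_{i=2}^{d}z_{j,i}\,q_{i}'(t_{j})$. Since the $t_{j}$ are distinct, univariate Lagrange interpolation picks, for each $i\geq 2$, a $q_{i}$ of degree at most $D'-1$ with $q_{i}(t_{j})=v_{j,i}$. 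With those $q_{i}$ fixed, the $z_{1}$-partial condition becomes an interpolation problem for $q_{0}'$ at the $t_{j}$, with explicit right-hand sides $v_{j,1}-\sum_{i\geq 2}z_{j,i}q_{i}'(t_{j})$; a further Lagrange interpolation gives $q_{0}'$ of degree at most $D'-1$, and integration then produces $q_{0}$ of degree at most $D'$.

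Finally, the total degree of $p$ satisfies $\deg p\leq\max(\deg q_{0},\,1+\max_{i\geq 2}\deg q_{i})\leq D'$, so $p$ lies in the span of $\{p_{\alpha}:\alpha\in\mathcal{I}_{D^{+}}\}$ as soon as $D^{+}\geq D'$, which yields the required surjectivity and hence full row rank. The point I expect to demand the most care is the off-diagonal coupling $\sum_{i\geq 2}z_{j,i}q_{i}'(t_{j})$ appearing in the $z_{1}$-partial: it forces the sequential ordering (the $q_{i}$ for $i\geq 2$ first, then $q_{0}$) and is precisely why the degree bound must rise from $D'-1$ for pure value interpolation to $D'$ when gradients are prescribed.
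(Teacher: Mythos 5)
Your argument is correct, and it reaches the same degree bound $D^{+}\geq D'$ as the paper, but the construction is genuinely different. The paper chooses $Q$ so that \emph{all} $d$ coordinate projections $\pi_{i}Qz_{1},\ldots,\pi_{i}Qz_{D'}$ are simultaneously distinct (a measure-zero condition on $Q$ in each coordinate, achievable for Haar-random $Q$), and then uses the fully separable ansatz $\mathfrak{p}(z)=\sum_{i=1}^{d}P_{i}(\pi_{i}z)$. Under that ansatz $\partial_{i}\mathfrak{p}=P_{i}'(\pi_{i}z)$ depends on the $i$th coordinate alone, so each component of the prescribed gradient is an independent univariate interpolation problem with no coupling whatsoever; taking each $P_{i}'$ of degree $D'-1$ gives $\deg\mathfrak{p}=D'$. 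You instead require only the first coordinates $t_{j}=\pi_{1}Qz_{j}$ to be distinct -- the same minimal genericity condition already used in Lemma~\ref{lem:3-vandermonde} -- and pay for it with the semi-separable ansatz $q_{0}(z_{1})+\sum_{i\geq2}z_{i}q_{i}(z_{1})$, whose $z_{1}$-partial carries the coupling term $\sum_{i\geq2}z_{j,i}q_{i}'(t_{j})$ and therefore forces a triangular ordering: solve for $q_{i}$, $i\geq2$, then for $q_{0}$. Your degree accounting ($\deg q_{i}\leq D'-1$ so $\deg z_{i}q_{i}\leq D'$, and $\deg q_{0}\leq D'$ after integrating) lands on the same $D'$. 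In short, the paper's decomposition buys full decoupling at the cost of a stronger (but still generic) condition on $Q$; yours buys the weakest possible condition on $Q$ at the cost of a sequential solve. Both are valid proofs of the lemma.
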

\begin{proof}
Arguing as in the previous lemma, we may assume $\pi_{i}Qz_{1},\ldots,\pi_{i}Qz_{D'}$
to be distinct for $i=1,\ldots,d$. Following \cite{SauerYorkeCasdagli1991}
and assuming $Q$ to be the identity without loss of generality, we
may then find a polynomial $\mathfrak{p}_{i}(\pi_{i}z)$ of degree
$D'-1$ that interpolates the $i$th component of the prescribed gradients
at $z_{1},\ldots,z_{D'}$. We may then obtain the prescribed gradients
from $\mathfrak{p}(z)=\int\mathfrak{p}_{1}\,d\pi_{1}z+\cdots+\int\mathfrak{p}_{d}\,d\pi_{d}z$.
Thus, a linear combination of the columns of (\ref{eq:incomplete-Hermite})
can produce any prescribed gradients.
\end{proof}
To obtain prevalence results with a fixed observation function, Lemmas
\ref{lem:3-vandermonde} and \ref{lem:4-incomplete-hermite} need
to be combined into another lemma. Therefore, let 
\begin{equation}
\left(\begin{array}{c}
p_{\alpha}(z_{1})\\
\vdots\\
p_{\alpha}(z_{D'})\\
\nabla p_{\alpha}(z_{1})\\
\vdots\\
\nabla p_{\alpha}(z_{D'})
\end{array}\right)\label{eq:complete-Hermite}
\end{equation}
be the multivariate Hermite matrix at $z_{1},\ldots,z_{D'}$ and with
$\alpha\in\mathcal{I}_{D^{+}}$. 
\begin{lem}
The rank of the Hermite matrix (\ref{eq:complete-Hermite}) is equal
to the number of its rows if $D^{+}\geq2D'-1$.\label{lem:5-complete-Hermite}\end{lem}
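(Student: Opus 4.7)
The plan is to prove surjectivity onto $\mathbb{R}^{D'(d+1)}$ of the row-wise action of the Hermite matrix. Concretely, I will show that for arbitrary prescribed values $v_j\in\mathbb{R}$ and gradients $g_j\in\mathbb{R}^d$ at $z_1,\ldots,z_{D'}$, there exists a polynomial $\mathfrak{p}$ of total degree at most $2D'-1$ with $\mathfrak{p}(z_j)=v_j$ and $\nabla\mathfrak{p}(z_j)=g_j$. Such a $\mathfrak{p}$ is a linear combination of $p_\alpha$ with $\alpha\in\mathcal{I}_{D^+}$ as soon as $D^+\geq 2D'-1$, which witnesses the claim about rank.

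The construction mirrors Lemmas \ref{lem:3-vandermonde} and \ref{lem:4-incomplete-hermite}. Draw a Haar-random orthogonal matrix $Q$; outside a set of Haar measure zero the scalars $\pi_i Q z_1,\ldots,\pi_i Q z_{D'}$ are distinct for each $i=1,\ldots,d$. After replacing $z_j$ by $Qz_j$ and $g_j$ by $Qg_j$, and observing that post-composing the final polynomial with $Q$ preserves total degree and transforms gradients back correctly, I may assume $Q=I$. The key observation is that a univariate polynomial $h(\pi_i z)$ has $z$-gradient $h'(\pi_i z)\,e_i$, concentrated on the $i$th coordinate axis; this lets me build $\mathfrak{p}$ from univariate pieces, each piece affecting only one coordinate of each gradient.

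Let $q_1$ be the univariate Hermite interpolant of degree at most $2D'-1$ with $q_1(\pi_1 z_j)=v_j$ and $q_1'(\pi_1 z_j)=(g_j)_1$; classical univariate Hermite interpolation furnishes $q_1$ because $\pi_1 z_1,\ldots,\pi_1 z_{D'}$ are distinct. For $i=2,\ldots,d$, let $r_i$ be a univariate polynomial of degree at most $2D'-1$ with $r_i(\pi_i z_j)=0$ and $r_i'(\pi_i z_j)=(g_j)_i$, again by univariate Hermite interpolation. Define
\[
\mathfrak{p}(z)\;=\;q_1(\pi_1 z)+\sum_{i=2}^{d}r_i(\pi_i z).
\]
Then $\mathfrak{p}(z_j)=v_j+0=v_j$, and the $i$th partial at $z_j$ is $q_1'(\pi_1 z_j)=(g_j)_1$ for $i=1$ and $r_i'(\pi_i z_j)=(g_j)_i$ for $i\geq 2$, giving $\nabla\mathfrak{p}(z_j)=g_j$. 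The total degree is at most $2D'-1$, as desired.

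The only delicate point is the degree count: matching $D'$ values and $D'$ prescribed derivatives in a single coordinate forces the univariate degree up to $2D'-1$, which is exactly the hypothesis $D^+\geq 2D'-1$. There is no deeper obstacle beyond recognizing that the prescribed values can be absorbed entirely into the single piece $q_1$, while the remaining coordinates of the gradients are corrected by zero-value Hermite pieces $r_i$ that leave the matched values undisturbed.
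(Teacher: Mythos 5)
Your proof is correct and follows the paper's strategy of reducing to univariate interpolation via a Haar-random rotation and assembling the interpolant as a sum of single-coordinate univariate pieces, with the first-coordinate piece a full Hermite interpolant of degree $2D'-1$. The only difference is cosmetic: where the paper reuses the integral-of-Lagrange construction from Lemma \ref{lem:4-incomplete-hermite} for coordinates $2,\ldots,d$ (degree-$D'$ pieces with uncontrolled values, absorbed afterward into the correction $\mathfrak{q}(\pi_1 z)$), you instead use zero-value Hermite interpolants $r_i$ of degree $2D'-1$, which makes the pieces non-interacting and removes the need for a value-correction step.
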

\begin{proof}
Suppose function values as well as gradients are prescribed at $z_{1},\ldots,z_{D'}$.
We may obtain the prescribed gradients at $z_{2},\ldots,z_{D'}$ as
in the previous proof in the form $\mathfrak{p}(z)=\int\mathfrak{p}_{2}\,d\pi_{2}z+\cdots+\int\mathfrak{p}_{d}\,d\pi_{d}z$.
To obtain suitable function values as well as the $\pi_{1}$ component
of the gradients, we may take the polynomial $\mathfrak{p}(z)+\mathfrak{q}(\pi_{1}z)$
with $\mathfrak{q}$ being a suitable univariate Hermite interpolant
of degree $2D'-1$.
\end{proof}
A matrix $M$ is said to be circulant if its subsequent rows are obtained
by rotating the first row. If the number of columns is $\mathfrak{n}$
and the first row is $\mathfrak{a_{1}},\ldots,\mathfrak{a}_{n}$,
the second row must be $\mathfrak{a}_{n},\mathfrak{a}_{1},\ldots,\mathfrak{a}_{n-1}$.
The following lemma about circulant matrices will be used in the next
section to refine the discussion of \cite{SauerYorkeCasdagli1991}.
\begin{lem}
Let $M$ be a $\mathfrak{m}\times D'$ circulant matrix whose first
row is $1,0^{j_{1}},-1,0^{j_{2}}$, where $0^{j_{1}}$ is $0$ repeated
$j_{1}$ times. The rank of $M$ is equal to $\mathfrak{m}$ if $\mathfrak{m}\leq\ceil{D'/2}$.\label{lem:6-circulant}\end{lem}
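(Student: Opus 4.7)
The plan is to identify rows of $M$ with polynomials modulo $x^{D'}-1$. Reading the first row $(1, 0^{j_1}, -1, 0^{j_2})$ as the polynomial $g(x) = 1 - x^{j_1+1}$, the circulant structure means that row $i$ (for $i = 0, \ldots, \mathfrak{m}-1$) corresponds to $x^i g(x) \bmod (x^{D'}-1)$. A linear relation $\sum_{i=0}^{\mathfrak{m}-1} a_i \cdot (\text{row } i) = 0$ therefore translates into
\[
a(x)\, g(x) \equiv 0 \pmod{x^{D'}-1}, \qquad a(x) := \sum_{i=0}^{\mathfrak{m}-1} a_i x^i,
\]
with $\deg a \leq \mathfrak{m} - 1$. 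Linear independence of the rows reduces to showing this forces $a \equiv 0$.

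Next I would pass to the evaluation map $\mathbb{C}[x]/(x^{D'}-1) \to \mathbb{C}^{D'}$ at the $D'$-th roots of unity. For each $\zeta$ with $\zeta^{D'} = 1$, we get $g(\zeta)\, a(\zeta) = 0$. Now $g(\zeta) = 0$ iff $\zeta^{j_1+1} = 1$, and the number of $\zeta$ that are simultaneously $D'$-th roots and $(j_1+1)$-th roots of unity equals $k := \gcd(j_1+1,\, D')$. Consequently $a$ must vanish at the remaining $D' - k$ of the $D'$-th roots of unity.

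The final step is a divisibility bound. Since $k \mid j_1+1$ and $k \mid D' = (j_1+1)+(j_2+1)$, we get $k \mid j_2+1$ as well, so $k \leq \min(j_1+1,\, j_2+1)$. Because $(j_1+1) + (j_2+1) = D'$, this minimum is at most $\lfloor D'/2 \rfloor$, hence $D' - k \geq \lceil D'/2 \rceil \geq \mathfrak{m}$. But then $a$ is a polynomial of degree at most $\mathfrak{m}-1 < D' - k$ with at least $D'-k$ roots, forcing $a \equiv 0$, so all $a_i$ vanish.

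The main step is the brief gcd bookkeeping that converts $k \leq \min(j_1+1, j_2+1)$ into the required inequality $D' - k \geq \lceil D'/2 \rceil$; once that is in hand, the polynomial argument closes immediately. I do not anticipate a serious obstacle, only the need to be careful that the odd-$D'$ case is handled (there the parity forces $j_1 \neq j_2$, so the minimum is strictly less than $D'/2$ and the ceiling comes out right).
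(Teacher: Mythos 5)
Your proof is correct, and it takes a genuinely different route from the paper's. The paper argues combinatorially: after assuming without loss of generality that $j_{1}\leq\floor{D'/2}-1$, it tracks the column occupied by the $-1$ as rows are rotated (column $j_{1}+k+1$ in row $k$), checks via $j_{1}+\mathfrak{m}+1\leq D'$ that these entries never wrap around, and observes that the resulting $\mathfrak{m}$ columns yield a lower triangular submatrix with $-1$ on the diagonal, hence rank $\mathfrak{m}$. You instead diagonalize the circulant structure: identifying row $i$ with $x^{i}g(x)$ in $\mathbb{C}[x]/(x^{D'}-1)$, $g(x)=1-x^{j_{1}+1}$, evaluating at the $D'$-th roots of unity, and noting that a dependency polynomial $a$ of degree $<\mathfrak{m}$ must vanish at the $D'-\gcd(j_{1}+1,D')$ roots where $g$ does not; the gcd bookkeeping, using $D'=(j_{1}+1)+(j_{2}+1)$ so that $\gcd(j_{1}+1,D')\leq\min(j_{1}+1,j_{2}+1)\leq\floor{D'/2}$, then forces $a\equiv0$. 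What your approach buys is a sharper structural statement: it shows the first $\mathfrak{m}$ rows are independent whenever $\mathfrak{m}\leq D'-\gcd(j_{1}+1,D')$, and this threshold equals $\ceil{D'/2}$ only in the extremal case $j_{1}+1=j_{2}+1=D'/2$ and is strictly larger otherwise, so the lemma's hypothesis is not tight in general. The paper's proof is more elementary (no passage to $\mathbb{C}$ or roots of unity) and matches the hands-on style of the other rank lemmas in that section, but either argument is complete and correct.
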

\begin{proof}
We must have $j_{1}+j_{2}=D'-2$. Either $j_{1}$ or $j_{2}$ must
be less than or equal to $(D'-2)/2$. Because they are both integers,
either $j_{1}$ or $j_{2}$ must be $\leq\floor{\frac{D'-2}{2}}$.
Without loss of generality, we assume $j_{1}\leq\floor{D'/2}-1$.
As the rows are rotated, the $-1$ appears in column $j_{1}+k+1$
for $k=1,\ldots,\mathfrak{m}$. The columns do not wrap around because
\[
j_{1}+\mathfrak{m}+1\leq\floor{D'/2}-1+\ceil{D'/2}+1\leq D'.
\]
All those columns are linearly independent.
\end{proof}
The final rank lemma is obvious from elementary linear algebra. We
state it explicitly because it is invoked often and has a key position
in the framework of \cite{SauerYorkeCasdagli1991}. For the most part,
the lemma is invoked silently.
\begin{lem}
If the rank of the matrix $B$ is equal to the number of its rows,
the rank of the product $AB$ is equal to the rank of $A$.\label{lem:7-linalg}
\end{lem}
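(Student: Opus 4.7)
The plan is to exploit the fact that the hypothesis ``$\operatorname{rank}(B)$ equals the number of rows of $B$'' is exactly the statement that $B$ has full row rank, and therefore admits a right inverse. Once a right inverse is in hand, the conclusion falls out of the submultiplicativity of rank applied in both directions.

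In more detail, suppose $B$ is $m\times n$ with $\operatorname{rank}(B)=m$. I would first observe that the rows of $B$ are linearly independent, which forces $BB^{T}$ to be an $m\times m$ symmetric positive definite matrix and hence invertible. Setting $B^{+}=B^{T}(BB^{T})^{-1}$ yields $BB^{+}=I_{m}$, i.e.\ $B$ has a right inverse. (Alternatively, one can appeal directly to the fact that a matrix with linearly independent rows has a right inverse, without writing it out.)

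With $B^{+}$ in hand, the proof is a two-line sandwich. The inequality $\operatorname{rank}(AB)\leq\operatorname{rank}(A)$ is the standard bound that the column space of $AB$ is contained in the column space of $A$. Conversely, writing $A=(AB)B^{+}$ gives $\operatorname{rank}(A)\leq\operatorname{rank}(AB)$ by the same standard bound applied to the product $(AB)B^{+}$. Combining the two inequalities yields $\operatorname{rank}(AB)=\operatorname{rank}(A)$.

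There is really no obstacle here; the only thing to be careful about is to check that the dimensions of $A$ line up so that $AB$ and $(AB)B^{+}$ are well-defined, which is automatic from the hypothesis that $AB$ is a legal product. This is why the authors note that the lemma is ``obvious from elementary linear algebra'' and state it mainly as a bookkeeping device, invoked silently elsewhere.
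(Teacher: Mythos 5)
Your proof is correct: full row rank gives a right inverse $B^{+}$ with $BB^{+}=I$, and the two inequalities $\operatorname{rank}(AB)\leq\operatorname{rank}(A)$ and $\operatorname{rank}(A)=\operatorname{rank}((AB)B^{+})\leq\operatorname{rank}(AB)$ sandwich the result. The paper itself states this lemma without proof, calling it ``obvious from elementary linear algebra,'' so there is nothing to compare against; your argument supplies a clean justification the paper omits.
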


\section{Review of Sauer et al \cite{SauerYorkeCasdagli1991}}

In this section, we review the main results and proofs of \cite{SauerYorkeCasdagli1991}.
Our aim is two-fold. The review helps us prepare the ground for our
results about prevalence with a fixed observation map. Second, we
point out and fix an error in \cite{SauerYorkeCasdagli1991}, while
presenting the proof with greater formal precision and completeness.
The error in \cite{SauerYorkeCasdagli1991} is a minor one relative
to the depth of ideas found in that paper. We also point out errors
and gaps in earlier mathematical treatments that are much more serious.

Let $\phi:\mathbb{R}^{n}\rightarrow\mathbb{R}^{n}$ be a diffeomorphism
that is at least $C^{2}$. We adopt the following convention:
\begin{description}
\item [{Convention}] about $x,y$: If $x_{1}$ is a point in $\mathbb{R}^{n}$,
then $x_{2}=\phi(x_{1})$, $x_{3}=\phi(x_{2})$, and so on. Similarly,
$y_{2}=\phi(y_{1})$, $y_{3}=\phi(y_{2})$, and so on. It must be
noted that this convention does not apply to $z$. For example, $z_{1},\ldots,z_{D'}$
are any distinct points in Lemma \ref{lem:3-vandermonde}.
\end{description}
The observation function is assumed to be the (at least twice continuously
differentiable) function $o:\mathbb{R}^{n}\rightarrow\mathbb{R}$,
which maps every state vector to a real number. If the state vector
is $x_{1}$, the corresponding delay vector is 
\[
F_{0}(x_{1})=\left(\begin{array}{c}
o(x_{1})\\
\vdots\\
o(x_{D})
\end{array}\right),
\]
where $D$ will be referred to as the embedding dimension. 

Let $K\subset\mathbb{R}^{n}$ be a possibly fractal set of box counting
dimension $d$. The set $K$ is assumed to be compact. The delay mapping
$F_{0}$ restricted to $K$ may not be injective. To examine the injectivity
more generally, we perturb the observation function to 
\[
o(x)+\sum_{\alpha\in\mathcal{I}_{2D-1}}c_{\alpha}p_{\alpha}(x)
\]
and examine injectivity in the ball $\norm{c_{\alpha}}\leq a_{0}$
with $a_{0}>0$ and fixed. The perturbed delay vector becomes
\[
F_{\alpha}(x)=F_{0}(x)+\left(\begin{array}{c}
p_{\alpha}(x_{1})\\
\vdots\\
p_{\alpha}(x_{D})
\end{array}\right)(c_{\alpha}),
\]
with $\alpha$ ranging over $\mathcal{I}_{2D-1}$. We use $F_{\alpha}$
instead of $F_{c_{\alpha}}$to denote the delay vector for simplicity
and without risk of confusion. The two assumptions about $C_{K}$
made in the previous section are carried forward.
\begin{thm}[\cite{SauerYorkeCasdagli1991}]
If $D>2d$ and $\phi$ has finitely many periodic points $x$ of
periods less than $2D$, the delay mapping $x\rightarrow F_{\alpha}(x)$
is injective for $x\in K$ for a set of $c_{\alpha}$ of probability
$1$ relative to $\norm{c_{\alpha}}\leq a_{0}$. \label{thm:8-SYC-injectivity}
\end{thm}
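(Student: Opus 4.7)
The plan is to apply the cover-and-transfer strategy reviewed in Section 2 to the injectivity defect $G_{\alpha}(x,y) = F_{\alpha}(x) - F_{\alpha}(y)$. By writing $\{x \neq y\} = \bigcup_{n \geq 1}\{\norm{x-y} \geq 1/n\}$ and taking a countable union, it suffices to fix $\delta > 0$ and show that the set of $c_{\alpha}$ for which $G_{\alpha}(x,y)=0$ for some $(x,y) \in \mathcal{K}(\delta) = \{(x,y) \in K \times K : \norm{x-y} \geq \delta\}$ has measure zero inside $\norm{c_{\alpha}} \leq a_{0}$. Since $G_{\alpha}(x,y) = (M(x)-M(y))\,c_{\alpha} + F_{0}(x) - F_{0}(y)$, where $M(x)$ is the $D\times|\mathcal{I}_{2D-1}|$ matrix with rows $p_{\alpha}(x_{k})$, the whole argument rides on a uniform lower bound for the first $D$ singular values of $M(x)-M(y)$ on the relevant compact region.

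Let $P$ be the finite set of periodic points of period strictly less than $2D$, and split bad pairs $(x,y)$ into three sub-problems: (A) both $x,y \in P$; (B) exactly one of $x,y$ lies in $P$; (C) neither. Case (A) is finite, and each pair contributes a proper affine subspace of parameter space --- $x_{1}\neq y_{1}$ forces some monomial $p_{\alpha}$ to distinguish them, making the corresponding column of $M(x)-M(y)$ nonzero. Case (B), taken one $p\in P$ at a time, reduces to a single-variable covering of $K$ by $C_{K}/\epsilon^{d}$ balls. Case (C) is the main case, covered by $C_{K}/\epsilon^{2d}$ balls in $K\times K$, and closely parallels the Whitney-style argument at the end of Section 2.

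The technical heart is the rank claim: at every $(x,y)$ arising in cases (B) and (C), the matrix $M(x)-M(y)$ has rank $D$, and by continuity together with compactness of $\mathcal{K}(\delta)$ its $D$-th singular value admits a positive lower bound $\sigma_{\delta}>0$. To establish the rank, factor $M(x)-M(y)=EV$, where $V$ is the multivariate Vandermonde at the distinct points of $\{x_{1},\ldots,x_{D},y_{1},\ldots,y_{D}\}$ (at most $2D$ of them). Lemma~\ref{lem:3-vandermonde} applies because $2D-1$ exceeds the number of rows of $V$ minus one, so $V$ has full row rank, and Lemma~\ref{lem:7-linalg} reduces the problem to showing that the selection-difference matrix $E$ has rank $D$. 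In case (C), because neither $x$ nor $y$ is periodic of period $<2D$, the two orbits in the window are either disjoint --- so $E$ is a row-permutation of $(I_{D}\mid -I_{D})$ --- or related by a shift $y_{k}=x_{k+s}$, in which case $E$ has rows $e_{k}-e_{k+s}$ whose pivot structure forces rank $D$. In case (B), a cyclic block appears where the rows take the form $e_{k\bmod m}-e_{(k+s)\bmod m}$, and its rank is controlled by the circulant Lemma~\ref{lem:6-circulant}.

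Once the rank claim is in place, Lemma~\ref{lem:1-transfer-of-volume} with $\mathfrak{r}=D$ and $\sigma=\sigma_{\delta}$ bounds the probability of $\norm{G_{\alpha}(x,y)}\leq L\epsilon$ at any fixed base point by $O(\epsilon^{D})$. Multiplying by the covering count yields $O(\epsilon^{D-2d})$ in case (C) and $O(\epsilon^{D-d})$ in case (B), both of which tend to zero as $\epsilon\to0$ under the hypothesis $D>2d$. The main obstacle I expect is case (B), where the non-periodic tail of $y$ interacts with the cycle of $p\in P$ and the rank analysis of $E$ becomes delicate; this is precisely the point at which the circulant lemma is needed, and presumably where the ``minor gap'' in Sauer et al.\ alluded to in the introduction resides.
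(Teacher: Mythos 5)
Your decomposition into (A) both points periodic of period $<2D$, (B) exactly one, (C) neither mirrors the paper's Case 1 / Case 2.3 / Cases 2.1--2.2, and the factor-the-matrix, establish rank, cover compactly, transfer volume pipeline is exactly right. But there is one genuine gap and a couple of misattributions worth flagging.

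The gap is in the claim that, because $M(x)-M(y)$ has rank $D$ on cases (B) and (C), ``by continuity together with compactness of $\mathcal{K}(\delta)$ its $D$-th singular value admits a positive lower bound $\sigma_{\delta}>0$.'' This does not follow. Your $\mathcal{K}(\delta)$ excludes only the diagonal; it still contains pairs arbitrarily close to the finite exceptional set $P\times P$. At a pair $(p,q)$ with both periods $<D$, the compressed matrix is $D\times(\mathfrak{p}+\mathfrak{q})$ and its rank can be strictly less than $D$ (e.g.\ two period-$2$ points with $D=6$ give rank $\leq4$). The set of case (B)/(C) pairs in $\mathcal{K}(\delta)$ is therefore \emph{not} compact --- it is $\mathcal{K}(\delta)$ minus a finite set --- and the infimum of $\sigma_{D}$ over it equals the infimum over its closure, which picks up the degenerate values at case-(A) pairs and can be zero. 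Consequently, ball centers near $P\times P$ cannot be controlled, and the $O(\epsilon^{D-2d})$ bound collapses. The repair is to carve out a second $\delta$-neighborhood: require $\mathrm{dist}\left((x,y),\mathcal{K}_{1}\right)\geq\delta$ in the definition of the covered set (as the paper does with $\mathcal{K}_{2}(\delta)$), so compactness really does yield $\sigma_{\delta}>0$, and then take $\delta\to0$ through a countable sequence at the end. The paper calls explicit attention to exactly this point after the proof, noting that starting from an arbitrary cover of $K\times K$ is not sufficient.

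Two smaller corrections. First, in case (B) the orbits of the periodic point and the nonperiodic point cannot overlap (overlap would make both periodic), so the $-1$'s live in a separate block $-I_{D}$ and the rank-$D$ conclusion is immediate; neither rows of the form $e_{k\bmod m}-e_{(k+s)\bmod m}$ nor Lemma~\ref{lem:6-circulant} enter at this stage. Second, the ``minor gap'' in Sauer et al.\ is not hiding in your case (B): it arises when a point has period strictly between $D$ and $2D$, so that an overlapping window produces a circulant $J_{c}$ that genuinely wraps around and drops rank below $D$. Your definition of $P$ (period $<2D$) already sweeps that situation into the finite case (A), which is precisely the fix, but your narrative places the difficulty in the wrong case.
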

Theorem \ref{thm:8-SYC-injectivity} is less general than corresponding
statements in \cite{SauerYorkeCasdagli1991}. Our aim is to exhibit
techniques while forsaking generality. The manner in which more general
statements can be obtained is discussed later.
\begin{proof}
Define $G_{\alpha}(x_{1},y_{1})=F_{\alpha}(x_{1})-F_{\alpha}(y_{1})$.
We then have $G_{\alpha}(x_{1},y_{1})=F_{0}(x_{1})-F_{0}(y_{1})+\mathcal{M}(c_{\alpha})$,
where 
\[
\mathcal{M}=J\mathcal{V},\:\:J=\left(\begin{array}{cccccc}
1 &  &  & -1\\
 & \ddots &  &  & \ddots\\
 &  & 1 &  &  & -1
\end{array}\right),\:\:\mathcal{V}=\left(\begin{array}{c}
p_{\alpha}(x_{1})\\
\vdots\\
p_{\alpha}(x_{D})\\
p_{\alpha}(y_{1})\\
\vdots\\
p_{\alpha}(y_{D})
\end{array}\right).
\]
Here $J$ is $D\times2D$ and $\mathcal{V}$ is $2D\times D_{\alpha}$,
where $D_{\alpha}$ is the cardinality of $\mathcal{I}_{2D-1}$. The
proof turns on the determination of the rank of $\mathcal{M}$. If
$x_{i}$ and $y_{i}$, $1\leq i\leq D$, are $2D$ distinct points,
we may apply Lemmas \ref{lem:3-vandermonde} and \ref{lem:7-linalg}
and immediately conclude that the rank of $\mathcal{M}$ is $D$.
However, if not all points are distinct, the rank of $\mathcal{V}$
is obviously not equal to the number of rows. Several cases need to
be considered to determine the rank of $\mathcal{M}.$

\emph{Case 1:} both $x_{1}$ and $y_{1}$ are periodic of period less
than $2D$ with $x_{1}\neq y_{1}$. The set of such pairs $(x_{1},y_{1})$
is finite (by assumption) and will be denoted by $\mathcal{K}_{1}$.
There are two subcases.

\emph{Case 1.1:} $x_{1}$ and $y_{1}$ lie on distinct orbits. If
so $\mathcal{M}$ can be written in a compressed form as $\mathcal{M}=J_{c}\mathcal{V}_{c}$
with 
\[
J_{c}=\left(\begin{array}{cc}
C_{1} & C_{2}\end{array}\right),\:\:\mathcal{V}_{c}=\left(\begin{array}{c}
p_{\alpha}(x_{1})\\
\vdots\\
p_{\alpha}(x_{\mathfrak{p}})\\
p_{\alpha}(y_{1})\\
\vdots\\
p_{\alpha}(y_{\mathfrak{q}})
\end{array}\right),
\]
where $\mathfrak{p,q}$ are the periods of $x_{1},y_{1}$ (or $D$
if the periods are greater than $D$), respectively. Further, $C_{1}$
is a $D\times\mathfrak{p}$ circulant matrix with first row $1,0,\ldots$
and $C_{2}$ is a $D\times\mathfrak{q}$ circulant matrix with first
row $-1,0,\ldots$. The rank of $\mathcal{V}_{c}$ is equal to the
number of its rows by Lemma \ref{lem:3-vandermonde} and $J_{c}$
is nonzero. Therefore, we may assert that the rank of $\mathcal{M}$
is $1$ or greater.

\emph{Case 1.2:} $x_{1}$ and $y_{1}$ lie on the same periodic orbit.
In this case, we may write 
\[
J_{c}=C_{1},\:\:\mathcal{V}_{c}=\left(\begin{array}{c}
p_{\alpha}(x_{1})\\
\vdots\\
p_{\alpha}(x_{\mathfrak{p}})
\end{array}\right),
\]
where $\mathfrak{p}$ is the period of $x_{1}$, $C_{1}$ is a $D\times\mathfrak{p}$
circulant matrix whose first row is of the form $1,0,\ldots,0,-1,0,\ldots.0$.
Again, we conclude that the rank of $\mathcal{M}$ is greater than
$1$.

Suppose $G_{\alpha}(x_{1},y_{1})=0$ for some $(x_{1},y_{1})\in\mathcal{K}_{1}$.
Then $\mathcal{M}c_{\alpha}=0$ and $c_{\alpha}$ must lie on a hyperplane
of co-dimension $1$ or greater. Because $\mathcal{K}_{1}$ is finite,
we may assert $G_{\alpha}(x_{1},y_{1})\neq0$ for all $(x_{1},y_{1})\in\mathcal{K}_{1}$
with probability $1$ relative to the ball $\norm{c_{\alpha}}\leq a_{0}$.
Case 1 is now complete.

\emph{Case 2:} Define $\mathcal{K}_{2}(\delta)$ to be the set of
all $(x_{1},y_{1})\in K\times K$ satisfying 
\begin{enumerate}[nolistsep]
\item $\norm{x_{1}-y_{1}}\geq\delta$,
\item $\mathrm{dist}\left((x_{1},y_{1}),\mathcal{K}_{1}\right)\geq\delta$.
All distances in this paper use the $L_{2}$ or spectral norm.
\end{enumerate}
The matrix $\mathcal{M}$ has a rank equal to $D$ for each point
in $\mathcal{K}_{2}(\delta)$, as we will prove by breaking up case
2 into subcases.

\emph{Case 2.1: }$x_{1},\ldots,x_{D},y_{1},\ldots,y_{D}$ are $2D$
distinct points. In this case, $\mathcal{M}=J\mathcal{V}$ has rank
equal to $D$ as noted at the beginning of the proof.

\emph{Case 2.2: }$x_{1},\ldots,x_{D}$ are distinct, $y_{1},\ldots,y_{D}$
are distinct, and neither $x_{1}$ nor $y_{1}$ is a periodic point
of period less than $2D$, but $y_{1}=x_{j}$ or $x_{1}=y_{j}$ for
$j\in\set{2,\ldots,D}$. Without loss of generality, we assume $y_{1}=x_{j}$.

In this case, the compressed form is $\mathcal{M}=J_{c}\mathcal{V}_{c}$
with 
\[
J_{c}=(C_{1}),\:\:\mathcal{V}_{c}=\left(\begin{array}{c}
p_{\alpha}(x_{1})\\
\vdots\\
p_{\alpha}(x_{D+j-1})
\end{array}\right),
\]
where $C_{1}$ is $D\times(D+j-1)$ circulant matrix with first row
equal to $1,0^{j-2},-1,0^{D-1}$. The $-1$ does not wrap around and
the rank of $C_{1}$ and therefore of $\mathcal{M}$ is $D$.

\emph{Case 2.3: }$x_{1}$ periodic of period less than $2D$ and $y_{1}$
not so (or vice versa, which may be ignored without loss of generality).
In this case, the compressed form is $\mathcal{M}=J_{c}\mathcal{V}_{c}$
with 
\[
J_{c}=(C_{1},C_{2}),\:\:\mathcal{V}_{c}=\left(\begin{array}{c}
p_{\alpha}(x_{1})\\
\vdots\\
p_{\alpha}(x_{\mathfrak{p}})\\
p_{\alpha}(y_{1})\\
\vdots\\
p_{\alpha}(y_{D})
\end{array}\right),
\]
where $\mathfrak{p}$ is the period of $x_{1}$, $C_{1}$ is a $D\times\mathfrak{p}$
circulant matrix with first row $1,0,\ldots$, and $C_{2}$ is a $D\times D$
circulant matrix with first row $-1,0,\ldots$ The column rank of
$C_{2}$ is equal to $D$ and therefore the rank of $\mathcal{M}$
is also $D$.

We can now complete case 2 as follows. Suppose $G_{\alpha}(x_{1},y_{1})=0$
for some $(x_{1},y_{1})\in\mathcal{K}_{2}(\delta)$. By assumption
(2) about $C_{K}$, cover $\mathcal{K}_{2}(\delta)$ with $C_{K}/\epsilon^{2d}$
or fewer $\epsilon$-balls for $\epsilon>0$. At this point, we introduce
an assumption about $L$:
\begin{description}
\item [{Assumption}] about $L$ (1): The Lipshitz constant of $G_{\alpha}(x_{1},y_{1})$
with respect to $(x_{1},y_{1})\in K\times K$ and with $\norm{c_{\alpha}}\leq a_{0}$
is bounded by $L$. The existence of $L$ is a consequence of the
compactness of $K\times K$, the compactness of $\norm{c_{\alpha}}\leq a_{0}$,
and the differentiability assumption about the observation function
$o$ and the diffeomorphism $\phi$.
\end{description}
It then follows that if $G_{\alpha}(x_{1},y_{1})=0$ at some point
$(x_{1},y_{1})\in\mathcal{K}_{2}(\delta)$, then $\norm{G_{\alpha}(x_{1},y_{1})}=\norm{\mathcal{M}(c_{\alpha})}\leq L\epsilon$
at the center of one of the $\epsilon$-balls covering $\mathcal{K}_{2}(\delta)$.
Define 
\[
\sigma_{\delta}=\min_{(x_{1},y_{1})\in\mathcal{K}_{2}(\delta)}\sigma_{D}\left(\mathcal{M}\right).
\]
By compactness of $\mathcal{K}_{2}(\delta)$, $\sigma_{\delta}$ exists
and is positive. By the transfer of volume Lemma \ref{lem:1-transfer-of-volume},
which is applied with $\mathfrak{r}\leftarrow D$, the probability
of $\norm{G_{\alpha}(x_{1},y_{1})}\leq L\epsilon$ relative to the
ball $\norm{c_{\alpha}}\leq a_{0}$ at a point $(x_{1},y_{1})\in\mathcal{K}_{2}(\delta)$
is upper bounded by 
\[
\frac{D_{\alpha}!L^{D}\epsilon^{D}}{\sigma_{\delta}^{D}a_{0}^{D}}.
\]
Because $\mathcal{K}_{2}(\delta)$ can be covered with $C_{K}/\epsilon^{2d}$
or fewer $\epsilon$-balls, the probability that $G_{\alpha}(x_{1},y_{1})=0$
for some $(x_{1},y_{1})\in\mathcal{K}_{2}(\delta)$ is upper bounded
by 
\[
\frac{C_{K}}{\epsilon^{2d}}\times\frac{D_{\alpha}!L^{D}\epsilon^{D}}{\sigma_{\delta}^{D}a_{0}^{D}}.
\]
Because $D>2d$ and by taking $\epsilon\rightarrow0$, we conclude
that the probability of $G_{\alpha}(x_{1},y_{1})=0$ for some $(x_{1},y_{1})\in\mathcal{K}_{2}(\delta)$
relative to $\norm{c_{\alpha}}\leq a_{0}$ is one. Case 2 is now complete.

To complete the proof of injectivity, take the union of the measure
zero sets in case 2 with $\delta=1,\frac{1}{2},\frac{1}{2^{2}},\ldots$
and the measure zero set in case 1. Outside of that measure $0$ subset
of the ball $\norm{c_{\alpha}}\leq a_{0}$, we have $G_{\alpha}(x_{1},y_{1})\neq0$
for $(x_{1},y_{1})\in K\times K$ and $x_{1}\neq y_{1}$.
\end{proof}
The ideas in the proof presented above are from \cite{SauerYorkeCasdagli1991},
although our presentation is more precise and formally complete. Theorem
\ref{thm:8-SYC-injectivity} makes an assumption on periodic points
of period $<2D$ and not $\leq D$ as in \cite{SauerYorkeCasdagli1991}.
To see why the more stringent assumption is needed, we turn to \cite[p. 611, case 3]{SauerYorkeCasdagli1991}.
The case ``$x$ and $y$ are not both periodic with period $\leq w$''
is considered ($w$ is $D$ in our notation) and it is stated that
$J_{xy}$ (which is $J_{c}$ in our notation) is triangular of rank
$D$. Unfortunately, that statement is not correct. 

To understand why that statement is not true, assume $D=6$. Suppose
$x_{1}$ is a periodic point of period $8>D$ and that $y_{1}=x_{5}$.
Then $J_{c}$ will be a $6\times8$ circulant matrix which looks as
follows:
\[
\left(\begin{array}{cccccccc}
1 & 0 & 0 & 0 & -1 & 0 & 0 & 0\\
0 & 1 & 0 & 0 & 0 & -1 & 0 & 0\\
0 & 0 & 1 & 0 & 0 & 0 & -1 & 0\\
0 & 0 & 0 & 1 & 0 & 0 & 0 & -1\\
-1 & 0 & 0 & 0 & 1 & 0 & 0 & 0\\
0 & -1 & 0 & 0 & 0 & 1 & 0 & 0
\end{array}\right).
\]
Evidently, the rank of this matrix is $4<D$. 

The easiest way to fix the minor error is to assume the number of
periodic points of period $<2D$ to be finite as we have done. However,
Sauer et al \cite{SauerYorkeCasdagli1991} place conditions on the
box counting dimension of the set of periodic points of period $p$.
The conditions involving quantities such as $\mathrm{rank}(BC_{pq}^{w})$
are not easy to interpret and it is unclear what they mean. The basic
idea of assuming a bound on the box counting dimension of periodic
points of a certain period is a sound one. It can be developed fully
using Lemma \ref{lem:6-circulant} about the rank of circulant matrices
and variations of that lemma. We have not done so for two reasons.
The proof becomes a great deal more complicated, and at this point
having a clear and complete account of the main ideas appears more
important than a slightly more general theorem. Additionally, if the
box counting dimension of the set of periodic points is greater than
$1$, then $1$ will be a characteristic multiplier that is repeated
more than once, which is excluded in the immersivity theorem.

The gaps in \cite{Takens1981} and \cite{Aeyels1980} are much less
minor. In \cite{Takens1981}, it is assumed that the delay map is
an embedding in some neighborhood of the periodic points. The proof
of that assumption is unlikely to be as straightforward as assumed.
Even granting that assumption, the argument for transversality \cite[p. 371]{Takens1981}
appears incomplete. In particular, it does not consider the possibility
that perturbing the delay map of $x$ may also perturb the delay map
of $x'$, for example, when $x'=\phi(x)$ and the orbits of $x,x'$
overlap. There are yet other aspects of the proof we were not able
to verify. For example, \cite[p. 370, case iii]{Takens1981} seems
to require $x$ to be close to a periodic point and $x'$ to be away
from a periodic point. It is then asserted that $x,\ldots,\phi^{2m}(x),x',\ldots,\phi^{2m}(x')$
are distinct. How could that be true if $x$ is a fixed point? How
is the possibility $x'=\phi(x)$ handled?

The gaps in \cite{Aeyels1980} also occur in handling overlaps of
orbits and periodic points. The main argument \cite[p. 598]{Aeyels1980}
entirely ignores the possibility that orbits of $x^{\ast}$ and $y^{\ast}$
may overlap. Further, it is suggested that difficulties associated
with fixed points can be handled by adjusting the delays but no details
are provided about carrying out that suggestion. 

Going back to the work of Sauer et al \cite{SauerYorkeCasdagli1991},
a point in our proof of Theorem \ref{thm:8-SYC-injectivity} is worth
calling to attention. In the proof, $\mathcal{K}_{2}(\delta)$ is
covered with $\epsilon$-balls and it is assumed that every ball center
is in $\mathcal{K}_{2}(\delta)$. It is not sufficient to start with
any cover of $K\times K$ because a ball center can be arbitrarily
close to the diagonal or to a pair of periodic points and $\sigma_{D}(\mathcal{M})$
may become arbitrarily small. 

If we say that a certain compact set $\mathfrak{s}$ is covered by
a certain number of $\epsilon$-balls, it is assumed that each ball
has a center that lies in $\mathfrak{s}$. That assumption comes up
repeatedly in the proof of immersivity, which we now turn to. Once
again all the ideas are from \cite{SauerYorkeCasdagli1991}. Here
$K$ is assumed to be a smooth, closed, and compact submanifold of
dimension $d$ and $T_{1}K$ denotes its unit tangent bundle. If $x\in K$
and $v$ is tangent to $K$ at $x$, then $(x,v)\in T_{1}K$ if and
only if $\norm v=1$.
\begin{thm}
\cite{SauerYorkeCasdagli1991} If $D>2d-1$, $K$ is invariant under
$\phi$, $\phi$ has finitely many points $x\in K$ of period less
than $D$, and all characteristic multipliers of each of those points
are distinct, then $x\rightarrow F_{\alpha}(x)$ is immersive over
$K$ with probability $1$ relative to the ball $\norm{c_{\alpha}}\leq a_{0}$.\label{thm:9-SYC-immersivity}\end{thm}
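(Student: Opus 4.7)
The plan is to mirror the structure of Theorem~\ref{thm:8-SYC-injectivity}, replacing pairs $(x_1,y_1) \in K\times K$ by $(x,v) \in T_1 K$ and the difference map $G_\alpha$ by the tangent map $dF_\alpha(x,v)$. Set $v_i = d\phi^{i-1}(x)v$, so that $dF_\alpha(x,v) = dF_0(x,v) + \mathcal{M}(c_\alpha)$, where $\mathcal{M}$ is the $D \times D_\alpha$ matrix with $(i,\alpha)$-entry $v_i^T \nabla p_\alpha(x_i)$. The goal is to rule out $dF_\alpha(x,v)=0$ anywhere on $T_1 K$ with probability one, and the two cases parallel those of Theorem~\ref{thm:8-SYC-injectivity}: whether or not $x$ is periodic of period less than $D$.

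\emph{Case 2: $x$ is not periodic of period less than $D$.} Then $x_1,\ldots,x_D$ are distinct and each $v_i \neq 0$ since $d\phi^{i-1}$ is invertible. By Lemma~\ref{lem:4-incomplete-hermite} one may prescribe $\nabla P(x_i) \in \mathbb{R}^n$ independently at each distinct $x_i$ using a polynomial $P$ of degree at most $2D-1$; combined with Lemma~\ref{lem:7-linalg}, this gives $\mathrm{rank}\,\mathcal{M} = D$. Let $\mathcal{T}(\delta) \subset T_1 K$ be the compact subset whose base point lies at distance at least $\delta$ from the finite set of periodic points of period $<D$. Compactness and continuity give $\sigma_D(\mathcal{M}) \geq \sigma_\delta > 0$ on $\mathcal{T}(\delta)$, and Lipschitz continuity of $dF_\alpha$ over $T_1 K \cap \{\norm{c_\alpha} \leq a_0\}$ furnishes a constant $L$. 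Covering $\mathcal{T}(\delta)$ by $O(\epsilon^{-(2d-1)})$ balls of radius $\epsilon$ (since $\dim T_1 K = 2d-1$) and applying Lemma~\ref{lem:1-transfer-of-volume} with $\mathfrak{r} \leftarrow D$ at each ball center yields total probability $O(\epsilon^{D-(2d-1)}) \to 0$ as $\epsilon \to 0$, since $D > 2d-1$. Unioning over $\delta = 1, 1/2, 1/4, \ldots$ handles the entire non-periodic stratum.

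\emph{Case 1: $x$ is periodic of period $p < D$.} By hypothesis, only finitely many such $x$ exist. Fix one, and let $M = d\phi^p(x)|_{T_x K}$, whose eigenvalues (the characteristic multipliers) are distinct by hypothesis. Viewing $B(x,c_\alpha) := dF_\alpha(x,\cdot)$ as a linear map from $T_x K$ to $\mathbb{R}^D$ depending algebraically on $c_\alpha$, the condition ``$\mathrm{rank}\,B(x,c_\alpha) < d$'' is Zariski-closed, so it defines a measure-zero set provided rank $d$ is achievable for some $c_\alpha$. A compressed-row analysis, in the spirit of Case~1 of Theorem~\ref{thm:8-SYC-injectivity}, rewrites the rows of $B(x,c_\alpha)$, indexed by $i = r+kp$, as covectors $(M^T)^k \tilde w_r$ on $T_x K$, where $\tilde w_r \in (T_x K)^*$ is built from $\nabla P(x_r)$ for $r=1,\ldots,p$; by Lemma~\ref{lem:4-incomplete-hermite} the $\tilde w_r$ can be prescribed independently. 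Distinct eigenvalues of $M^T$ make every generic covector a cyclic vector, and since the total Krylov budget satisfies $\sum_{r=1}^p (K_r+1) = D > 2d-1 \geq d$ with $K_r = \lfloor(D-r)/p\rfloor$, one can iteratively select the $\tilde w_r$ so that the combined Krylov covectors span all of $(T_x K)^*$, forcing rank $d$. Thus the bad set has measure zero for this $x$, and unioning over the finite set of periodic base points completes Case~1 and the theorem.

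The main obstacle is the rank degeneracy of $\mathcal{M}$ at periodic base points: along eigenvector tangent directions of $M$, the Krylov span collapses and the smallest relevant singular value of $\mathcal{M}$ cannot be uniformly bounded from below over the unit tangent sphere at $x$, which would defeat a naive covering argument on $T_1 K$. Replacing that covering bound with the Zariski argument in Case~1 sidesteps the degeneracy, and the distinct-characteristic-multipliers hypothesis is used precisely to ensure that Krylov covectors from different starting $\tilde w_r$ can be made to inject new directions until $(T_x K)^*$ is filled.
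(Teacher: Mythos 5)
Your Case 2 (non-periodic base point) tracks the paper's handling of $\mathcal{K}_{D}(\delta)$ exactly: Lemma~\ref{lem:4-incomplete-hermite} combined with Lemma~\ref{lem:7-linalg} gives rank $D$, compactness gives the singular-value floor $\sigma_{\delta}>0$, the covering is by $O(\epsilon^{-(2d-1)})$ balls, Lemma~\ref{lem:1-transfer-of-volume} transfers volume, and the net probability is $O(\epsilon^{D-(2d-1)})\to 0$ since $D>2d-1$.

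Your Case 1 is where you diverge, and the divergence is genuine. The paper does not treat the periodic base points as a single measure-zero problem; it stratifies the periodic part of $T_{1}K$ into sets $\mathcal{K}_{1},\mathcal{K}_{2}(\delta),\ldots,\mathcal{K}_{d}(\delta)$ according to how many eigenvectors of the monodromy matrix enter the tangent direction $v_{1}$, proves $\mathrm{rank}\,\mathcal{N}\geq r$ on the $(r-1)$-dimensional stratum $\mathcal{K}_{r}(\delta)$ via a Vandermonde-in-$\lambda_{i}$ argument, and runs the covering argument on each stratum (with $\mathcal{K}_{1}$ finite, so even rank $1$ suffices there). You instead observe that since there are only finitely many periodic base points, it is enough for each fixed $x$ to show that the set of $c_{\alpha}$ with $\mathrm{rank}\,dF_{\alpha}(x,\cdot)|_{T_{x}K}<d$ has measure zero; this follows from the Zariski-closedness of the rank-drop locus once rank $d$ is achievable for \emph{some} $c_{\alpha}$, which you verify with a Krylov argument using the distinct-multipliers hypothesis. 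Your route is shorter and avoids the bookkeeping of stratum dimension versus rank; the paper's route is better adapted to generalizations where the periodic set is not finite but has low box-counting dimension (which the authors allude to after Theorem~\ref{thm:8-SYC-injectivity}). Both hinge on distinct multipliers (the paper via a Vandermonde in $\lambda_{i}$; you via cyclic vectors for $M^{T}$), so the hypothesis is used equally essentially.

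Two points to tighten. First, the Krylov spanning step is asserted ("one can iteratively select") rather than proved. A concrete instantiation: pick subsets $S_{1}\cup\cdots\cup S_{p}=\{1,\ldots,d\}$ with $|S_{r}|\leq K_{r}+1$ (possible since $\sum_{r}(K_{r}+1)=D\geq d$), set $\tilde{w}_{r}=\sum_{j\in S_{r}}e_{j}$ in the eigenbasis of $M^{T}$; then the $|S_{r}|\times(K_{r}+1)$ Vandermonde on the distinct $\{\lambda_{j}\}_{j\in S_{r}}$ shows $\mathrm{span}\{(M^{T})^{k}\tilde{w}_{r}:0\leq k\leq K_{r}\}=\mathrm{span}\{e_{j}:j\in S_{r}\}$, and these span $(T_{x}K)^{*}$. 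Second, your Krylov argument produces a $c_{\alpha}$ for which the \emph{linear} part $\mathcal{N}(c_{\alpha})$ has rank $d$, but $B(x,c_{\alpha})=dF_{0}(x,\cdot)+\mathcal{N}(c_{\alpha})$ carries a fixed constant summand; the standard remedy is to note that the $d\times d$ minors of $dF_{0}(x,\cdot)+t\,\mathcal{N}(c_{\alpha}^{0})$ are degree-$d$ polynomials in $t$ with leading coefficient a nonzero minor of $\mathcal{N}(c_{\alpha}^{0})$, so the rank is $d$ for large $t$, supplying the witness the Zariski argument needs.
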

\begin{proof}
If $x\rightarrow F_{\alpha}(x)$ and $v$ is a tangent vector to $K$
at $x$, then we denote the vector that $v$ is mapped to by $dF_{\alpha}(x,v)$.
The following convention about $v$ is an extension of the convention
about $x,y$ explained earlier.
\begin{description}
\item [{Convention}] about $v$: If $v_{1}$ is tangent to $K$ at $x_{1}$,
then $v_{2}=\frac{\partial\phi}{\partial x}\Bigl|_{x_{1}}v_{1}$,
$v_{3}=\frac{\partial\phi}{\partial x}\Bigl|_{x_{2}}v_{2}$, and so
on. Because $\phi$ is a diffeomorphism, $v_{i}$ are all nonzero
like $v_{1}$.
\end{description}
We write $dF_{\alpha}(x_{1},v_{1})=dF_{0}(x_{1},v_{1})+\mathcal{N}(c_{\alpha})$,
where 
\[
\mathcal{N}=J\mathcal{H},\:\:J=\left(\begin{array}{ccc}
v_{1}^{T}\\
 & \ddots\\
 &  & v_{D}^{T}
\end{array}\right),\:\:\mathcal{H}=\left(\begin{array}{c}
\nabla p_{\alpha}(x_{1})\\
\vdots\\
\nabla p_{\alpha}(x_{D})
\end{array}\right).
\]
The proof will turn on the rank of $\mathcal{N}=J\mathcal{H}$. If
$x_{1},\ldots,x_{D}$ are distinct, the rank of $\mathcal{N}$ is
$D$ because the rank of $\mathcal{H}$ is equal to the number of
its rows by Lemma \ref{lem:4-incomplete-hermite} and the rank of
$J$ is obviously $D$.

To study the rank of $\mathcal{N}$, it is useful to define the following
disjoint sets of $T_{1}K$.
\begin{itemize}
\item $\mathcal{K}_{1}$ is the set of all $(x_{1},v_{1})$ such that $x_{1}$
is a periodic point of period less than $D$ and $v_{1}$ is an eigenvector
of the periodic point $x_{1}$. By eigenvector of a periodic point,
we mean an eigenvector of the corresponding monodromy matrix. 
\item $\mathcal{K}_{2}(\delta)$ is the set of all $(x_{1},v_{1})$ such
that $x_{1}$ is a periodic point of period less than $D$ and $v_{1}$
is a linear combination of two eigenvectors of $x_{1}$. It is also
required that 
\[
\mathrm{dist}((x_{1},v_{1}),\mathcal{K}_{1})\geq\delta.
\]
We will denote $\mathcal{K}_{2}(0)$, where this last condition is
not operative, by $\mathcal{K}_{2}$. Evidently, $\mathcal{K}_{1}$
is a subset of $\mathcal{K}_{2}$.
\item In general, $\mathcal{K}_{r}(\delta)$, where $r=2,\ldots,d$, is
defined as the set of $(x_{1},v_{1})\in T_{1}K$ such that $x_{1}$
is a periodic point of period $D$ or less and $v_{1}$ is a linear
combination of $r$ eigenvectors of the periodic point $x_{1}$.It
is also required that 
\[
\mathrm{dist}((x_{1},v_{1}),\mathcal{K}_{r-1})\geq\delta.
\]
We will denote $\mathcal{K}_{r}(0)$, where this last condition is
not operative, by $\mathcal{K}_{r}$. Evidently, $\mathcal{K}_{r-1}$
is a subset of $\mathcal{K}_{r}$.\\
 This sequence of cases stops at $r=d$ and does not go up to $r=n$
because we are only interested in those eigenvectors of the periodic
point $x_{1}$that are also tangent to $K$. The assumption about
the invariance of $K$ is used here. 
\item The final case is $\mathcal{K}_{D}(\delta)$ which consists of all
points $(x_{1},v_{1})\in T_{1}K$ such that $x_{1}$is not periodic
of period less than $D$ and the distance to $\mathcal{K}_{d}$ is
$\geq\delta$. 
\end{itemize}

The final case $\mathcal{K}_{D}(\delta)$ is the easiest to handle.
In this case, $x_{1},\ldots,x_{D}$ are distinct and the rank of $\mathcal{N}$
is $D$ as already mentioned. 

In the case $\mathcal{K}_{r}(\delta)$, the rank of $\mathcal{N}$
is in fact $r$ or greater. To verify, suppose $(x_{1},v_{1})\in\mathcal{K}_{r}(\delta)$.
Assume $v_{1}=\mathfrak{u}_{1}+\cdots+\mathfrak{u}_{r}$, where $\mathfrak{u}_{i}$
are eigenvectors at the periodic point $x_{1}$. Assume $v_{2}=\mathfrak{v}_{1}+\cdots+\mathfrak{v}_{r}$,
where $\mathfrak{v}_{i}$ are eigenvectors at point $x_{2}$ obtained
by pushing $\mathfrak{u}_{i}$ along with the map $\phi$. Likewise,
if $x_{1}$ is of period $p$, assume that $v_{p}=\mathfrak{w}_{1}+\cdots+\mathfrak{w}_{r}$.

Then the compressed form of $\mathcal{N}$ is $\mathcal{N}=J_{c}\mathcal{H}_{c}$
with 
\[
J_{c}=\left(\begin{array}{cccc}
\mathfrak{u}_{1}^{T}+\cdots+\mathfrak{u}_{r}^{T}\\
 & \mathfrak{v}_{1}^{T}+\cdots+\mathfrak{v}_{r}^{T}\\
 &  & \ddots & \mathfrak{w}_{1}^{T}+\cdots+\mathfrak{w}_{r}^{T}\\
\lambda_{1}\mathfrak{u}_{1}^{T}+\cdots+\lambda_{r}\mathfrak{u}_{r}^{T}\\
 & \ddots\\
 &  &  & \lambda_{1}\mathfrak{w}_{1}^{T}+\cdots+\lambda_{r}\mathfrak{w}_{r}^{T}\\
\lambda_{1}^{2}\mathfrak{u}_{1}^{T}+\cdots+\lambda_{r}^{2}\mathfrak{u}_{r}^{T}\\
 & \ddots
\end{array}\right),
\]
where $\lambda_{1},\ldots,\lambda_{r}$ are characteristic multipliers
and the pattern is repeated until $D$ rows are obtained, and 
\[
\mathcal{H}_{c}=\left(\begin{array}{c}
\nabla p_{\alpha}(x_{1})\\
\vdots\\
\nabla p_{\alpha}(x_{p})
\end{array}\right).
\]
The rank of $\mathcal{H}_{c}$ is equal to the number of its rows
by Lemma \ref{lem:4-incomplete-hermite}. The rank of $J_{c}$ is
$\min(rp,D)$ because the Vandermonde matrix 
\[
\left(\begin{array}{cccc}
1 & 1 & \ldots & 1\\
\lambda_{1} & \lambda_{2} & \ldots & \lambda_{r}\\
 & \vdots\\
\lambda_{1}^{r-1} & \lambda_{2}^{r-1} &  & \lambda_{r}^{r-1}
\end{array}\right)
\]
has full rank, the $\lambda_{i}$ being distinct by assumption. Therefore,
the rank of $\mathcal{N}$ is $r$ or greater for each $(x_{1},y_{1})\in\mathcal{K}_{r}(\delta)$.

To complete the proof, we note that $\mathcal{K}_{r}$ is of dimension
$r-1$ for $r=1,\ldots,d$ and that $T_{1}K$ is of dimension $2d-1$.
A new assumption about $C_{K}$ is useful.
\begin{description}
\item [{Assumption}] about $C_{K}$ (3): It is assumed that $\mathcal{K}_{r}$
can be covered with $C_{K}/\epsilon^{r-1}$ $\epsilon$-balls for
$r=1,\ldots,d$. It is assumed that $T_{1}K$ and therefore $\mathcal{K}_{D}(\delta)$
can be covered with $C_{K}/\epsilon^{2d-1}$ $\epsilon$-balls. 
\end{description}

We also extend the assumption about the Lipshitz bound $L$.
\begin{description}
\item [{Assumption}] about $L$ (2): It is assumed that the Lipshitz constant
of $dF_{\alpha}$ with respect to $(x_{1},v_{1})\in T_{1}K$ for $\norm{c_{\alpha}}\leq a_{0}$
is upper bounded by $L$. This assumption too may be verified using
compactness like the first assumption about $L$.
\end{description}

The proof may now be completed easily. Suppose $dF_{\alpha}(x_{1},v_{1})=0$
for some $(x_{1},v_{1})\in\mathcal{K}_{r}(\delta)$. Then $\norm{dF_{\alpha}(x_{1},v_{1})}\leq L\epsilon$
at the center of one of the $C_{K}/\epsilon^{r-1}$ balls covering
$\mathcal{K}_{r}(\delta)$. By the transfer of volume Lemma \ref{lem:1-transfer-of-volume},
the probability of such an event is upper bounded by 
\[
\frac{C_{K}}{\epsilon^{r-1}}\times\frac{D_{\alpha}!L^{r}\epsilon^{r}}{\sigma_{\delta}^{r}a_{0}^{r}},
\]
where $\sigma_{\delta}=\min\sigma_{r}(\mathcal{N})$ over $(x_{1},v_{1})\in\mathcal{K}_{r}(\delta)$.
The probability evidently goes to $0$ as $\epsilon\rightarrow0$
leaving us with a measure zero set of $c_{\alpha}$ where $F_{\alpha}$
is not immersive at some point in $\mathcal{K}_{r}(\delta)$ for $r=2,\ldots,d$.
The sets $\mathcal{K}_{1}$ and $\mathcal{K}_{D}(\delta)$ are handled
similarly.
\end{proof}
Theorem \ref{thm:9-SYC-immersivity} assumes $K$ to be a closed and
compact submanifold. That assumption implies $\mathcal{K}_{D}(\delta)$
to be compact. If $\mathcal{K}_{D}(\delta)$ is compact, we may conclude
that $\min\sigma_{D}(\mathcal{N})$ over $(x_{1},v_{1})\in\mathcal{K}_{D}(\delta)$
exists and is positive. The assumptions on $K$ can be reduced. However,
the technicalities that arise (see \cite{Hirsch1976}) are extraneous
to the main ideas in this paper.

\section{Perturbing the dynamical system}

Let $\phi:\mathbb{R}^{d}\rightarrow\mathbb{R}^{d}$ be a diffeomorphism,
which is as before but with $n=d$. Let $\psi(x)$ denote $\frac{\partial\phi}{\partial x}$.
The vector in $\mathbb{R}^{d}$ with first component $1$ and the
others zero is denoted by $\mathbf{e}_{1}$. The perturbed dynamical
system is
\[
\phi_{\alpha}(x)=\phi(x)+\mathbf{e}_{1}\left(p_{\alpha}(x)\right)(c_{\alpha})
\]
with $\alpha\in\mathcal{I}_{2D-1}$, where $D$ is the embedding dimension.
It may be noted we are only perturbing the first coordinate of $\phi$.
Because the observation function will be assumed to be $o=\pi_{1}$,
it is enough to perturb only the first coordinate.

The delay vector under $\phi$ is 
\[
F_{0}(x_{1})=\left(\begin{array}{c}
\pi_{1}x_{1}\\
\vdots\\
\pi_{1}x_{D}
\end{array}\right).
\]

\begin{description}
\item [{Convention}] about $\tilde{x}$: It is assumed that $\tilde{x}_{1}=x_{1}$
. Thereafter, it is assumed that $\tilde{x}_{2}=\phi_{\alpha}(\tilde{x}_{1})$,
$\tilde{x}_{3}=\phi_{\alpha}(\tilde{x}_{2}),$and so on.
\end{description}
The delay vector under $\phi_{\alpha}$ is therefore
\[
F_{\alpha}(x_{1})=\left(\begin{array}{c}
\pi_{1}\tilde{x}_{1}\\
\vdots\\
\pi_{1}\tilde{x}_{D}
\end{array}\right).
\]
It is worthy of notice that $\phi_{\alpha}$ perturbs only the first
component of $\phi$. Because the delay vector is built up using $\pi_{1}$,
$\phi_{\alpha}$ must perturb the first component. If not, the perturbation
may not propagate to the delay vector at all. It turns out that perturbing
only the first component is also sufficient to obtain a prevalence
theorem.

Our first task is to express $F_{\alpha}$ as a perturbation of $F_{0}$.
That can be done by simply iterating the definition of $\phi_{\alpha}$:
\begin{align*}
\tilde{x}_{1} & =x_{1}\\
\tilde{x}_{2} & =x_{2}+\mathbf{e}_{1}\left(p_{\alpha}(x_{1})\right)(c_{\alpha})\\
\tilde{x}_{3} & =x_{3}+\mathbf{e}_{1}\left(p_{\alpha}(x_{2})\right)(c_{\alpha})+\psi(x_{2})\mathbf{e}_{1}\left(p_{\alpha}(x_{1})\right)(c_{\alpha})+\mathcal{O}(c_{\alpha}^{2}).
\end{align*}
Above and later, $\mathcal{O}(c_{\alpha}^{2})$ is the same as $\mathcal{O}\left(\norm{c_{\alpha}}^{2}\right)$.
By following the pattern, we obtain
\begin{equation}
\tilde{x}_{j}=x_{j}+\mathbf{e}_{1}\left(p_{\alpha}(x_{j-1})\right)(c_{\alpha})+\rho_{j-1}(x_{2},\ldots,x_{j-1},p_{\alpha}(x_{1}),\ldots,p_{\alpha}(x_{j-2}))(c_{\alpha})+\mathcal{O}(c_{\alpha}^{2})\label{eq:rho-definition}
\end{equation}
for $j=2,\ldots,D$. Here it is important to note that $\rho_{j-1}$
is linear in $p_{\alpha}(x_{1}),\ldots,p_{\alpha}(x_{j-2})$. For
brevity, we will rewrite (\ref{eq:rho-definition}) as 
\begin{equation}
\tilde{x}_{j}=x_{j}+\mathbf{e}_{1}\left(p_{\alpha}(x_{j-1})\right)(c_{\alpha})+\rho_{j-1}(c_{\alpha})+\mathcal{O}(c_{\alpha}^{2}).\label{eq:rho-defn-brief}
\end{equation}
We then get 
\[
F_{\alpha}(x_{1})=F_{0}(x_{1})+\left(\begin{array}{c}
0\\
V(x_{1})
\end{array}\right)(c_{\alpha})+\mathcal{O}(c_{\alpha}^{2}),
\]
with the matrix $V(x_{1})$ defined by 
\[
V(x_{1})=\left(\begin{array}{c}
p_{\alpha}(x_{1})\\
p_{\alpha}(x_{2})+\pi_{1}\rho_{2}\\
\vdots\\
p_{\alpha}(x_{D-1})+\pi_{1}\rho_{D-1}
\end{array}\right).
\]
The next lemma is about the rank of $V(x_{1})$.
\begin{lem}
If $x_{1},\ldots,x_{D-1}$ are distinct, the rank of $V(x_{1})$ is
equal to the number of its rows.\label{lem:10-V(x1)-rank}\end{lem}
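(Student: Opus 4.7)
The plan is to factor $V(x_{1})$ as a product $T\cdot W$, where $T$ is a lower triangular matrix with ones on the diagonal and $W$ is a multivariate Vandermonde matrix, and then to invoke Lemma \ref{lem:3-vandermonde}. The key observation that makes this work is the triangular dependency structure of the remainders $\rho_{j}$: since $\rho_{j}$ is linear in $p_{\alpha}(x_{1}),\ldots,p_{\alpha}(x_{j-1})$ by (\ref{eq:rho-definition}), the scalar $\pi_{1}\rho_{j}$ is a fixed linear combination (with coefficients depending on $x_{2},\ldots,x_{j-1}$ but not on $\alpha$) of the row vectors $p_{\alpha}(x_{1}),\ldots,p_{\alpha}(x_{j-1})$. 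Hence row $j$ of $V(x_{1})$ equals $p_{\alpha}(x_{j})$ plus a linear combination of the earlier rows of the Vandermonde matrix.

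First, I would make this factorization explicit. Let $W$ be the $(D-1)\times|\mathcal{I}_{2D-1}|$ Vandermonde matrix whose $j$-th row is $p_{\alpha}(x_{j})$. Define the $(D-1)\times(D-1)$ matrix $T$ by $T_{jj}=1$, $T_{jk}=0$ for $k>j$, and $T_{jk}$ for $k<j$ equal to the coefficient of $p_{\alpha}(x_{k})$ in the expansion of $\pi_{1}\rho_{j}$ as a linear combination of $p_{\alpha}(x_{1}),\ldots,p_{\alpha}(x_{j-1})$. A direct reading of the row formula for $V(x_{1})$ then gives $V(x_{1})=TW$.

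Next, since $T$ is lower triangular with unit diagonal, it is invertible, so $\operatorname{rank}(V(x_{1}))=\operatorname{rank}(W)$ by Lemma \ref{lem:7-linalg} applied to $V(x_{1})=TW$ (together with the fact that multiplying on the left by an invertible matrix does not change the rank). Finally, the distinctness of $x_{1},\ldots,x_{D-1}$ together with Lemma \ref{lem:3-vandermonde}, applied with $D'=D-1$ and $D^{+}=2D-1$ (for which the hypothesis $D^{+}\geq D'-1$ is trivially satisfied), shows that $W$ has rank equal to the number of its rows, $D-1$. Therefore $V(x_{1})$ does too.

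The only real point requiring care is the bookkeeping of the triangular structure, that is, verifying that $\pi_{1}\rho_{j}$ has no contribution from $p_{\alpha}(x_{k})$ for $k\geq j$; this is immediate from the iterative derivation of (\ref{eq:rho-definition}), since each new iterate $\tilde{x}_{j}$ introduces dependence on $p_{\alpha}(x_{k})$ only through compositions with earlier iterates. Once that is noted, the proof is essentially a one-line reduction to the existing Vandermonde lemma.
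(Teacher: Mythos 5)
Your proposal is correct and is essentially the paper's argument, recast in slightly cleaner linear-algebraic language: the paper's back-substitution scheme for producing a preimage $(\mathfrak{a}_1',\ldots,\mathfrak{a}_{D-1}')$ from a target $(\mathfrak{a}_1,\ldots,\mathfrak{a}_{D-1})$ is exactly the application of $T^{-1}$ to the target, and your factorization $V(x_1)=TW$ packages that once and for all. Both proofs reduce to the same two facts — Lemma \ref{lem:3-vandermonde} applied to the Vandermonde block, and the strictly lower-triangular dependence of $\pi_1\rho_j$ on $p_\alpha(x_1),\ldots,p_\alpha(x_{j-1})$ — so there is no substantive difference.
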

\begin{proof}
Suppose we consider 
\[
\mathcal{V}=\left(\begin{array}{c}
p_{\alpha}(x_{1})\\
p_{\alpha}(x_{2})\\
\vdots\\
p_{\alpha}(x_{D-1})
\end{array}\right).
\]
The rank lemma \ref{lem:3-vandermonde} tells us that the rank of
$\mathcal{V}$ is equal to the number of its rows. 

Now to produce a vector $(\mathfrak{a}_{1},\ldots,\mathfrak{a}_{D-1})^{T}$
in the range of $V(x_{1})$, we proceed as follows. Define 
\begin{align*}
\mathfrak{a}_{1}' & =\mathfrak{a}_{1}\\
\mathfrak{a}_{2}' & =\mathfrak{a}_{2}-\rho(x_{1},\mathfrak{a}_{1}')\\
\mathfrak{a}_{3}' & =\mathfrak{a}_{3}-\rho(x_{1},x_{2},\mathfrak{a}_{1}',\mathfrak{a}_{2}')
\end{align*}
and so on. Because of the linearity of $\rho$ in $\mathfrak{a}_{i}$,
the vector $(c_{\alpha})$ that satisfies $\mathcal{V}(c_{\alpha})=(\mathfrak{a}_{1}',\ldots,\mathfrak{a}_{D-1}')^{T}$
also satisfies $V(x_{1})(c_{\alpha})=(\mathfrak{a}_{1},\ldots,\mathfrak{a}_{D-1})^{T}.$
\end{proof}
The next lemma is similar. Part (c) of the following lemma is more
general than Lemma \ref{lem:10-V(x1)-rank} because we allow $D^{+}>D$.
\begin{lem}
The following matrices have rank equal to the number of rows: \label{lem:11-V(x1)-V(y1)-rank}\end{lem}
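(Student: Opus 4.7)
The strategy is to extend the triangular-reduction idea from the proof of Lemma \ref{lem:10-V(x1)-rank}. Each correction term $\rho_{j-1}$ that appears inside a row of $V(x_1)$ is linear in $p_\alpha(x_1),\ldots,p_\alpha(x_{j-2})$, so the block built from the $x$-orbit differs from the pure Vandermonde block on $x_1,\ldots,x_{D-1}$ by a lower-triangular perturbation in the row index. The same is true, independently, for any block built from a $y$-orbit. To show a stacked matrix $M$ formed out of such blocks (and, where the part involves immersivity, the analogous gradient blocks $\nabla p_\alpha$) has rank equal to its number of rows, I would prescribe an arbitrary target $(\mathfrak{a}_1,\ldots)^T$ in the row space, then peel off the $\rho$-contributions inductively row by row to obtain an adjusted target $(\mathfrak{a}'_1,\ldots)^T$. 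Solving the reduced pure Vandermonde/Hermite system for $c_\alpha$ then solves the original system by the linearity of $\rho$ in $p_\alpha$.

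The reduced system is governed by Lemmas \ref{lem:3-vandermonde}, \ref{lem:4-incomplete-hermite}, and \ref{lem:5-complete-Hermite}. If the matrix in question combines $V(x_1)$ with $V(y_1)$, then after triangular reduction one is left with a multivariate Vandermonde matrix over the $2(D-1)$ points $x_1,\ldots,x_{D-1},y_1,\ldots,y_{D-1}$, and Lemma \ref{lem:3-vandermonde} gives full row rank provided these points are distinct and $D^+$ is sufficiently large (namely $D^+ \geq 2D-3$). If the matrix additionally incorporates gradient rows of the form $\nabla p_\alpha(x_j)$ coming from a tangent-vector computation, Lemma \ref{lem:5-complete-Hermite} is the relevant tool and forces $D^+$ up by roughly a factor of two. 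For part (c), where $D^+ > D$ is permitted, exactly the same argument applies: enlarging $\mathcal{I}_{D^+}$ only adds columns to $\mathcal{V}$, so the rank can only grow and the distinctness hypothesis together with Lemma \ref{lem:3-vandermonde} (or its Hermite refinements) suffices.

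The main thing to verify carefully is that the two orbits really decouple under the triangular back-substitution, i.e., that $\rho_{j-1}$ evaluated on the $y$-orbit does not pull in $x$-orbit data and vice versa. This is immediate from the recursive construction \eqref{eq:rho-definition}: the iteration of $\phi_\alpha$ producing $\tilde y_j$ only ever references $y_1,\ldots,y_{j-1}$ and $p_\alpha(y_1),\ldots,p_\alpha(y_{j-2})$, so the correction in the $y$-block depends only on $y$-quantities. Consequently the triangular reduction can be performed blockwise, and the reduction to the pure Vandermonde (or Hermite) matrix goes through. The remaining work is just the casework verifying that under the distinctness hypotheses stated in (a), (b), (c), the relevant underlying point sets are genuinely distinct so that Lemmas \ref{lem:3-vandermonde}--\ref{lem:5-complete-Hermite} apply with the stated $D^+$ thresholds.
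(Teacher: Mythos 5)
Your triangular back-substitution argument (peel off the $\rho$-contributions row by row, reduce to the pure Vandermonde block, invoke Lemma~\ref{lem:3-vandermonde}, with the blockwise decoupling of the $x$- and $y$-orbits justified by the recursive structure of~\eqref{eq:rho-definition}) is exactly the paper's intended argument; the paper's own proof simply says ``similar to the previous proof,'' deferring to Lemma~\ref{lem:10-V(x1)-rank}. One small correction: the aside about gradient rows and Lemma~\ref{lem:5-complete-Hermite} is out of place here, since none of parts (a)--(c) involve $\nabla p_\alpha$ (that belongs to Lemma~\ref{lem:12-H(x1,v1)-rank}), and in part (c) the index set $\mathcal{I}_{2D-1}$ is fixed --- the hypothesis $D^+\leq 2D$ bounds the number of rows so that Lemma~\ref{lem:3-vandermonde} still applies, rather than enlarging the polynomial family.
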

\begin{enumerate}[label=\alph{enumi}]
\item  $\left(\begin{array}{c}
V(x_{1})\\
V(y_{1})
\end{array}\right)$ assuming $x_{1},\ldots,x_{D-1},y_{1},\ldots,y_{D-1}$ to be distinct.
\item $\left(\begin{array}{c}
V(x_{1})\\
\mathfrak{m}_{k}
\end{array}\right),$ where $\mathfrak{m}_{k}$ is the first $k$ rows of $V(y_{1})$,
assuming $x_{1},\ldots,x_{D-1},y_{1},\ldots,y_{k}$ to be distinct. 
\item $\left(\begin{array}{c}
p_{\alpha}(x_{1})\\
p_{\alpha}(x_{2})+\pi_{1}\rho_{2}\\
\vdots\\
p_{\alpha}(x_{D^{+}-1})+\pi_{1}\rho_{D^{+}-1}
\end{array}\right)$ assuming $x_{1},\ldots,x_{D^{+}-1}$ are distinct and $D^{+}\leq2D$.\end{enumerate}
\begin{proof}
Similar to the previous proof.
\end{proof}
Our second task in this section is to obtain $dF_{\alpha}(x_{1},v_{1})$
as a perturbation of $dF_{0}(x_{1},v_{1})$. It is helpful to introduce
another convention:
\begin{description}
\item [{Convention}] about $w$: $w_{1}=v_{1}$, $w_{2}$ is obtained as
$\frac{\partial\phi_{\alpha}}{\partial x}\Bigl|_{\tilde{x}_{1}}w_{1}$,
$w_{3}$ is obtained as $\frac{\partial\phi_{\alpha}}{\partial x}\Bigl|_{\tilde{x}_{2}}w_{2}$,
and so on.
\end{description}
Thus, in effect we need to obtain perturbative expansions of $w_{i}$.
To do so, let us first note that 
\[
\frac{\partial\phi_{\alpha}}{\partial x}=\psi(x)+\mathbf{e}_{1}\left(\nabla p_{\alpha}(x)^{T}\right)(c_{\alpha}).
\]
We substitute the above equation into the iteration that defines $w_{i}$
and obtain
\begin{align*}
w_{1} & =v_{1}\\
w_{2} & =\psi(\tilde{x}_{1})w_{1}+\mathbf{e}_{1}\left(v_{1}^{T}\nabla p_{\alpha}(x_{1})\right)(c_{\alpha})\\
w_{3} & =\psi(\tilde{x}_{2})w_{2}+\mathbf{e}_{1}\left(v_{2}^{T}\nabla p_{\alpha}(x_{2})\right)(c_{\alpha})+\varrho_{2}+\mathcal{O}(c_{\alpha}^{2})
\end{align*}
and so on. If we now use (\ref{eq:rho-definition}) to substitute
for $\tilde{x}_{j}$, we obtain 
\[
w_{j}=v_{j}+\mathbf{e}_{1}\left(v_{j-1}^{T}\nabla p_{\alpha}(x_{j-1})\right)(c_{\alpha})+\varrho_{j-1}(c_{\alpha})+\mathcal{O}(c_{\alpha}^{2}),
\]
where $\rho_{j-1}$ is linear in 
\[
p_{\alpha}(x_{1}),\ldots,p_{\alpha}(x_{j-2}),\nabla p_{\alpha}(x_{1}),\ldots,\nabla p_{\alpha}(x_{j-2}).
\]
We may then write
\begin{align*}
dF_{\alpha}(x_{1},v_{1}) & =\left(\begin{array}{c}
\pi_{1}w_{1}\\
\vdots\\
\pi_{1}w_{D}
\end{array}\right)\\
 & =dF_{0}(x_{1},v_{1})+\left(\begin{array}{c}
0\\
H(x_{1},v_{1})
\end{array}\right)(c_{\alpha})+\mathcal{O}(c_{\alpha}^{2}),
\end{align*}
where 
\[
H(x_{1},v_{1})=\left(\begin{array}{c}
v_{1}^{T}\nabla p_{\alpha}(x_{1})\\
v_{2}^{T}\nabla p_{\alpha}(x_{2})+\pi_{1}\varrho_{2}\\
v_{3}^{T}\nabla p_{\alpha}(x_{3})+\pi_{1}\varrho_{3}\\
\vdots\\
v_{D-1}^{T}\nabla p_{\alpha}(x_{D-1})+\pi_{1}\varrho_{D-1}
\end{array}\right).
\]
The second task for this section concludes with a lemma about the
rank of $H(x_{1},v_{1})$.
\begin{lem}
If $x_{1},\ldots,x_{D-1}$ are distinct, the rank of $H(x_{1},v_{1})$
is equal to the number of its rows.\label{lem:12-H(x1,v1)-rank}\end{lem}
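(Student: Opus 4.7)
The plan is to mimic the proofs of Lemma \ref{lem:10-V(x1)-rank} and Lemma \ref{lem:11-V(x1)-V(y1)-rank} but replacing the Vandermonde input (Lemma \ref{lem:3-vandermonde}) by the complete Hermite input (Lemma \ref{lem:5-complete-Hermite}). Concretely, given an arbitrary target vector $(\mathfrak{a}_{1},\ldots,\mathfrak{a}_{D-1})^{T}\in\mathbb{R}^{D-1}$, I will produce a $c_{\alpha}$ with $H(x_{1},v_{1})(c_{\alpha})=(\mathfrak{a}_{1},\ldots,\mathfrak{a}_{D-1})^{T}$; this proves surjectivity of $H(x_{1},v_{1})$ as a linear map and hence that its rank equals $D-1$.

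First I would recall, from the derivation of $H(x_{1},v_{1})$ preceding the lemma, the two structural facts I will exploit. (a)~The $j$th row of $H(x_{1},v_{1})$ has the shape $v_{j}^{T}\nabla p_{\alpha}(x_{j})+\pi_{1}\varrho_{j}$, where $\varrho_{j}$ is a linear combination of $p_{\alpha}(x_{i})$ and $\nabla p_{\alpha}(x_{i})$ with $i\leq j-2$ only; in particular $\varrho_{j}(c_{\alpha})$ depends only on the Hermite data at $x_{1},\ldots,x_{j-2}$. (b)~The tangent vectors $v_{j}=\psi(x_{j-1})\cdots\psi(x_{1})v_{1}$ are all nonzero, because $\phi$ is a diffeomorphism (exactly as in the convention about $v$ invoked in the proof of Theorem \ref{thm:9-SYC-immersivity}).

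Since $x_{1},\ldots,x_{D-1}$ are distinct and the index set is $\mathcal{I}_{2D-1}$, i.e.\ $D^{+}=2D-1\geq 2(D-1)-1$, Lemma \ref{lem:5-complete-Hermite} applies to the complete Hermite matrix $\mathcal{W}$ built from the rows $p_{\alpha}(x_{i})$ and $\nabla p_{\alpha}(x_{i})$ for $i=1,\ldots,D-1$: its rank equals its number of rows, so the Hermite evaluation map $c_{\alpha}\mapsto\bigl(p_{\alpha}(x_{i})c_{\alpha},\nabla p_{\alpha}(x_{i})c_{\alpha}\bigr)_{i=1}^{D-1}$ is surjective. Thus it suffices to exhibit Hermite data $(f_{i},g_{i})\in\mathbb{R}\times\mathbb{R}^{d}$, $i=1,\ldots,D-1$, such that $v_{j}^{T}g_{j}+\pi_{1}\varrho_{j}(f_{i},g_{i}\colon i\leq j-2)=\mathfrak{a}_{j}$ for every $j$. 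This is done inductively in $j$: having fixed $(f_{i},g_{i})$ for $i<j$, the contribution $\pi_{1}\varrho_{j}$ is a known scalar, so the equation reduces to $v_{j}^{T}g_{j}=\mathfrak{a}_{j}-\pi_{1}\varrho_{j}$, which is solvable for $g_{j}$ because $v_{j}\neq 0$, while $f_{i}$ for $i\geq j-1$ and the remaining components of $g_{j}$ may be chosen arbitrarily. Pulling this Hermite data back through $\mathcal{W}$ yields the desired $c_{\alpha}$, and the conclusion follows via Lemma \ref{lem:7-linalg} applied to the factorization of $H(x_{1},v_{1})$ through $\mathcal{W}$.

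The only mildly delicate point, and the one I would be most careful about, is the bookkeeping of which Hermite data $\varrho_{j}$ actually consumes. The derivation of $w_{j}$ above the lemma shows $\varrho_{j}$ is linear in $p_{\alpha}(x_{i})$ and $\nabla p_{\alpha}(x_{i})$ for $i\leq j-2$ (strictly earlier than $j-1$ as well), so the induction does close and no circular dependency on $g_{j}$ itself is created. Everything else is a routine assembly from the earlier rank lemmas.
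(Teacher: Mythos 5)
Your proof is correct and takes essentially the same approach as the paper's: factor $H(x_1,v_1)$ through the complete Hermite matrix (Lemma~\ref{lem:5-complete-Hermite}), use $v_j\neq 0$ to solve the contracted row conditions, and exploit the triangular dependence of $\pi_1\varrho_j$ on earlier Hermite data to close the induction. One harmless off-by-one in your bookkeeping: from $w_j=v_j+\mathbf{e}_1\left(v_{j-1}^{T}\nabla p_{\alpha}(x_{j-1})\right)(c_{\alpha})+\varrho_{j-1}(c_{\alpha})+\mathcal{O}(c_{\alpha}^{2})$, the remainder $\varrho_j$ appearing in the $j$th row of $H(x_1,v_1)$ depends on the Hermite data at $x_1,\ldots,x_{j-1}$, not only $x_1,\ldots,x_{j-2}$; this is still strictly earlier than index $j$, so the induction closes exactly as you describe.
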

\begin{proof}
The proof is similar to that of Lemma \ref{lem:10-V(x1)-rank}. First
consider
\[
\left(\begin{array}{c}
p_{\alpha}(x_{1})\\
\vdots\\
p_{\alpha}(x_{D-1})\\
v_{1}^{T}\nabla p_{\alpha}(x_{1})\\
\vdots\\
v_{D-1}^{T}\nabla p_{\alpha}(x_{D-1})
\end{array}\right).
\]
By Lemma \ref{lem:5-complete-Hermite}, the rank of this matrix is
equal to the number of its rows. Suppose we want to find $(c_{\alpha})$
such that $H(x_{1},v_{1})(c_{\alpha})$ equals a specified vector
$(\mathfrak{a}_{1},\ldots,\mathfrak{a}_{D-1})^{T}$. To do so, we
find a vector $(c_{\alpha})$ such that the matrix displayed above
applied to $c_{\alpha}$ is equal to 
\[
\left(\begin{array}{c}
0\\
\vdots\\
0\\
\mathfrak{a}_{1}'\\
\vdots\\
\mathfrak{a}_{D-1}'
\end{array}\right),
\]
where $\mathfrak{a}_{1}'=\mathfrak{a}_{1}$, $\mathfrak{a}_{2}'=\mathfrak{a}_{2}-\mathfrak{r}_{2}$,
where $\mathfrak{r}_{2}$ is $\pi\varrho_{2}$ evaluated by replacing
$p_{\alpha}(x_{1})$ by $0$ and $v_{1}^{T}\nabla p_{\alpha}(x_{1})$
by $\mathfrak{a_{1}}',$ and so on.
\end{proof}
The third and final task of this section is to track the perturbation
of fixed points when the map $\phi$ is perturbed to $\phi_{\alpha}$.
\begin{lem}
Suppose $z_{0}=\phi(z_{0})$ and $\psi(z_{0})$ has no eigenvalue
equal to $1.$ Under $\phi\rightarrow\phi_{\alpha}$, the fixed point
$z_{0}$ perturbs to 
\[
z_{0}(c_{\alpha})=z_{0}+(I-\psi(z_{0}))^{-1}\mathbf{e}_{1}\left(p_{\alpha}(z_{0})\right)(c_{\alpha})+\mathcal{O}(c_{\alpha}^{2}).
\]
\label{lem:13-fixed-point-tracking}\end{lem}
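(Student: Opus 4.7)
The plan is to apply the implicit function theorem to the fixed-point equation and then read off the linear term in a Taylor expansion around $c_\alpha = 0$.

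First I would define
\[
\Phi(z,c_\alpha) = \phi_\alpha(z) - z = \phi(z) - z + \mathbf{e}_1\left(p_\alpha(z)\right)(c_\alpha),
\]
so that the fixed points of $\phi_\alpha$ are exactly the zeros of $\Phi$. The hypothesis $\phi(z_0) = z_0$ gives $\Phi(z_0, 0) = 0$. Since $\phi$ is at least $C^2$ and the perturbation is polynomial in $z$ and linear in $c_\alpha$, $\Phi$ is as smooth as we need.

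Next I would compute the partials at $(z_0, 0)$. Directly from the definition,
\[
\frac{\partial \Phi}{\partial z}\Bigl|_{(z_0,0)} = \psi(z_0) - I,
\]
which is invertible by the assumption that $\psi(z_0)$ has no eigenvalue equal to $1$. The $c_\alpha$-partial is
\[
\frac{\partial \Phi}{\partial c_\alpha}\Bigl|_{(z_0,0)} = \mathbf{e}_1\left(p_\alpha(z_0)\right),
\]
a $d \times D_\alpha$ matrix whose only nonzero row is $(p_\alpha(z_0))^T$ in the first coordinate. The implicit function theorem now produces a $C^r$ function $c_\alpha \mapsto z_0(c_\alpha)$ defined in a neighborhood of $c_\alpha = 0$ with $z_0(0) = z_0$ and $\Phi(z_0(c_\alpha), c_\alpha) \equiv 0$.

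Finally, differentiating the identity $\Phi(z_0(c_\alpha), c_\alpha) \equiv 0$ at $c_\alpha = 0$ and solving for the Jacobian yields
\[
\frac{\partial z_0}{\partial c_\alpha}\Bigl|_{0} = -\bigl(\psi(z_0) - I\bigr)^{-1} \mathbf{e}_1\left(p_\alpha(z_0)\right) = \bigl(I - \psi(z_0)\bigr)^{-1} \mathbf{e}_1\left(p_\alpha(z_0)\right),
\]
and Taylor's theorem gives the claimed expansion with an $\mathcal{O}(c_\alpha^2)$ remainder. There is no substantive obstacle: the only thing to be careful about is that the $c_\alpha$-partial is the linear operator $c_\alpha \mapsto \mathbf{e}_1 p_\alpha(z_0) (c_\alpha)$, i.e., the matrix with the row $(p_\alpha(z_0))^T$ sitting in the first coordinate and zeros elsewhere, so that the final expression has exactly the form stated in the lemma.
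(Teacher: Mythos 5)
Your proposal is correct and takes essentially the same route as the paper: apply the implicit function theorem to the fixed-point equation $\phi_\alpha(z)=z$, note that the hypothesis on $\psi(z_0)$ makes the $z$-partial invertible, and read off the linear term by implicit differentiation at $c_\alpha=0$. You simply spell out the computation in more detail than the paper does.
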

\begin{proof}
The function $z_{0}(c_{\alpha})$ exists by the implicit function
theorem. To obtain the expansion given in the lemma, start with 
\[
\phi(z_{0})+\mathbf{e}_{1}\left(p_{\alpha}(z_{0})\right)(c_{\alpha})=z_{0}
\]
differentiate with respect to $c_{\alpha}$ and obtain $\frac{\partial z_{0}}{\partial c_{\alpha}}$
at $c_{\alpha}=0$ using implicit differentiation.
\end{proof}

\section{The setting for injectivity and immersivity theorems}

In the case where $\phi$ is fixed and only the observation function
$o$ is perturbed, injectivity and immersivity are proved with respect
to the ball $\norm{c_{\alpha}}\leq a_{0}$, where $a_{0}>0$ can be
anything. Such a thing is plainly impossibly when $\phi$ is perturbed
to $\phi_{\alpha}$. Under a perturbation, the map may even fail to
be well defined or might blow-up in finite time. Therefore, we have
to specify the setting for injectivity and immersivity theorems more
carefully.

We will assume that $K$ is a compact sphere in $\mathbb{R}^{d}$
centered at the origin. The map $\phi_{\alpha}$ will be proved to
be injective and immersive over $K$. It is assumed that $K^{+}$
is a compact sphere bigger than $K$ and containing $K$. If $x_{1}\in K$,
it is assumed that $x_{1},\ldots,x_{D}$ all remain in $K^{+}$. In
addition, $a_{0}$ is assumed to be so small that $\tilde{x}_{1},\ldots,\tilde{x}_{D}$
all remain in $K^{+}$ for all $\norm{c_{\alpha}}\leq a_{0}$. Further
assumptions are enumerated below:
\begin{enumerate}
\item $\phi_{\alpha}:\mathbb{R}^{d}\rightarrow\mathbb{R}^{d}$ is assumed
to be a diffeomorphism (for $\norm{c_{\alpha}}\leq a_{0}$), that
is $C^{3}$ or better.
\item The map $\phi_{\alpha}$ has exactly $m$ fixed points and those will
be denoted by $\xi_{1}(c_{\alpha}),\ldots,\xi_{m}(c_{\alpha})$.
\item The map $\phi_{\alpha}$ has no other periodic points of period less
than $2D$.
\item All the fixed points are hyperbolic and $\pi_{1}\xi_{i}(c_{\alpha})\neq\pi_{1}\xi_{j}(c_{\alpha})$
if $i\neq j$. This assumption is made with the intention of simplifying
the proof so as to bring out the main techniques with greater clarity.
Here we are essentially assuming injectivity between fixed points.
\item We will also assume that $dF_{\alpha}$ is immersive at each fixed
point for the same reason.
\end{enumerate}
Now we will recall a few basic facts about Lebesgue points. A point
$\mathfrak{a\in}\mathbb{R}^{n}$ is a Lebesgue point of a measurable
set $A\subset\mathbb{R}^{n}$ if 
\[
\lim_{\epsilon\rightarrow0}\frac{\mu\left(A\cap\set{u\bigl|\norm{u-\mathfrak{a}}<\epsilon}\right)}{\mu\left(\set{u\bigl|\norm{u-\mathfrak{a}}<\epsilon}\right)}=1.
\]
We will need the following basic lemma.
\begin{lem}
If every point of the measurable set $B$ is a Lebesgue point of the
measurable set $A$, then $\mu(B-A)=0$.\label{lem:14-Lebesgue-points}\end{lem}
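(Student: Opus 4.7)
The plan is to apply the classical Lebesgue density theorem, which states that for any measurable set $C \subset \mathbb{R}^{n}$, almost every point of $C$ is a density point of $C$; that is, for $\mu$-a.e.\ $\mathfrak{a} \in C$,
\[
\lim_{\epsilon\to 0}\frac{\mu\left(C\cap\set{u\bigl|\norm{u-\mathfrak{a}}<\epsilon}\right)}{\mu\left(\set{u\bigl|\norm{u-\mathfrak{a}}<\epsilon}\right)} = 1.
\]
This is a standard consequence of the Lebesgue differentiation theorem applied to the indicator function $\chi_{C}$.

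Set $C = B - A$. Since $B$ and $A$ are both measurable, so is $C$. Pick any $\mathfrak{a} \in C$. Then $\mathfrak{a} \in B$, so by hypothesis $\mathfrak{a}$ is a Lebesgue point of $A$, meaning the density of $A$ at $\mathfrak{a}$ is $1$. Equivalently, the density of $A^{c}$ at $\mathfrak{a}$ is $0$. Since $C \subset A^{c}$, monotonicity of measure yields
\[
\frac{\mu\left(C\cap\set{u\bigl|\norm{u-\mathfrak{a}}<\epsilon}\right)}{\mu\left(\set{u\bigl|\norm{u-\mathfrak{a}}<\epsilon}\right)} \leq \frac{\mu\left(A^{c}\cap\set{u\bigl|\norm{u-\mathfrak{a}}<\epsilon}\right)}{\mu\left(\set{u\bigl|\norm{u-\mathfrak{a}}<\epsilon}\right)} \longrightarrow 0
\]
as $\epsilon \to 0$. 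Hence every point of $C$ has $C$-density equal to $0$.

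To conclude, suppose for contradiction that $\mu(C) > 0$. By the Lebesgue density theorem applied to $C$, almost every point of $C$ would have $C$-density equal to $1$. This contradicts the previous paragraph, which shows that every point of $C$ has $C$-density $0$. Therefore $\mu(C) = \mu(B - A) = 0$, as required. The argument is essentially a direct invocation of the Lebesgue density theorem; the only point that requires any care is verifying that $C = B \cap A^{c}$ is measurable, which is immediate from the measurability of $A$ and $B$.
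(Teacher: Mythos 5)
Your proof is correct and follows essentially the same route as the paper's: both invoke the Lebesgue density theorem and observe that a point of $B-A$ lies in $A^{c}$ yet has $A$-density $1$, which can only occur on a null set. The paper concludes directly that $B\cap A^{c}$ is contained in the exceptional null set of $A^{c}$, while you pass through the $C$-density of $C=B-A$ and derive a contradiction; these are cosmetic variations on the same argument.
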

\begin{proof}
Almost every point of $A$ is a Lebesgue point of $A$ \cite{Rudin1986}.
Similarly, almost every point of $A^{c}$, the complement of $A$,
is a Lebesgue point of $A^{c}$. If $\mathfrak{a}$ is a Lebesgue
point of $A^{(c)}$, 

\[
\lim_{\epsilon\rightarrow0}\frac{\mu\left(A\cap\set{u\bigl|\norm{u-\mathfrak{a}}<\epsilon}\right)}{\mu\left(\set{u\bigl|\norm{u-\mathfrak{a}}<\epsilon}\right)}=0.
\]
The lemma follows from these observations.
\end{proof}
Lemma \ref{lem:14-Lebesgue-points} will be crucial to our proof that
$\phi_{\alpha}$ is an embedding with probability $1$ relative to
$\norm{c_{\alpha}}<a_{0}$. In the case where $\phi$ is fixed and
only the observation function is perturbed, the proofs of injectivity
and immersivity consider the ball $\norm{c_{\alpha}}\leq a_{0}$ all
at once. Such a thing is not possible here. Instead, we have to pick
$c_{\alpha}^{*}$ satisfying $\norm{c_{\alpha}^{\ast}}<a_{0}$ and
localize around it and that is where Lemma \ref{lem:14-Lebesgue-points}
comes in. 

In order to localize around $c_{\alpha}^{\ast}$, we adopt new notation
that is centered at $c_{\alpha}^{\ast}$. The re-centered diffeomorphism
$\phi(x)+\mathbf{e}_{1}(p_{\alpha}(x))(c_{\alpha}^{\ast})$ is denoted
by $\Phi(x)$. Similarly, $\Psi$ denotes $\psi(x)+\mathbf{e}_{1}(\nabla p_{\alpha}(x))(c_{\alpha}^{\ast})$.
When we localize around $c_{\alpha}^{\ast}$, $\Phi_{\alpha}(x)$
will denote $\Phi(x)+\mathbf{e}_{1}(p_{\alpha}(x))(c_{\alpha})$.
The fixed point $\xi_{j}(c_{\alpha}^{\ast})$ is denoted $\Sigma_{j}$.
The fixed point $\xi_{j}(c_{\alpha}^{\ast}+c_{\alpha})$ is denoted
$\Sigma_{j}(c_{\alpha})$.
\begin{description}
\item [{Convention}] about $x,y$ updated: $x_{1},x_{2},\ldots$ are iterates
of $x_{1}$ under $\Phi$. Similarly, $y_{1},y_{2},\ldots$ are iterates
of $y_{1}$ under $\Phi$.
\item [{Convention}] about $\tilde{x}$ updated: $\tilde{x}_{1}=x_{1}$
and $\tilde{x}_{1},\tilde{x}_{2},\ldots$ are iterates of $x_{1}$
under $\Phi_{\alpha}$.
\item [{Convention}] about $v$ updated: we assume $(x_{1},v_{1})\in T_{1}K$
and $v_{2},v_{3},\ldots$ are obtained by iterating $d\Phi$.
\end{description}
All the lemmas of the previous section continue to hold after re-centering.
The delay vector $F_{\alpha}(x)$ defined in the previous section
will be denoted by $\mathbb{F}_{0}(x)$ if $c_{\alpha}$ is replaced
by $c_{\alpha}^{\ast}$. Similarly, if $c_{\alpha}$ is replaced by
$c_{\alpha}^{\ast}+c_{\alpha}$ in the definition of $F_{\alpha}(x)$,
we will denote the re-centered delay vector by $\mathbb{F}_{\alpha}(x)$.

We may write
\[
\mathbb{F}_{\alpha}(x_{1})=\mathbb{F}_{0}(x_{1})+\left(\begin{array}{c}
0\\
\mathbb{V}(x_{1})
\end{array}\right)(c_{\alpha})+\mathcal{O}(c_{\alpha}^{2}),
\]
with the definition of $\mathbb{V}(x_{1})$ being the same as that
of $V(x_{1})$ but with $\psi$ replaced by $\Psi$. Likewise,
\[
d\mathbb{F}_{\alpha}(x_{1},v_{1})=d\mathbb{F}_{0}(x_{1},v_{1})+\left(\begin{array}{c}
0\\
\mathbb{H}(x_{1},v_{1})
\end{array}\right)(c_{\alpha})+\mathcal{O}(c_{\alpha}^{2}),
\]
with a similar alteration of the definition of $H(x_{1},v_{1})$ to
get $\mathbb{H}(x_{1},v_{1})$.

Finally, we note that the centered analogue of $G_{\alpha}(x_{1},y_{1})=F_{\alpha}(x_{1})-F_{\alpha}(y_{1})$
is $\mathbb{G}_{\alpha}(x_{1},y_{1})=\mathbb{F}_{\alpha}(x_{1})-\mathbb{F}_{\alpha}(y_{1})$.

\section{Proof of injectivity}

In this section, our purpose is to prove that $F_{\alpha}(x_{1})$,
defined in section 5, is injective for $x_{1}\in K$. The assumptions
about $C_{K}$ and $L$ are carried forward from earlier sections,
although the third assumption about $C_{K}$ is not necessary in its
entirety. Further assumptions will be stated as the need arises. Let
us define $\Delta$ is the minimum distance between fixed points of
$F_{\alpha}$ in $K$ for $\norm{c_{\alpha}}\leq a_{0}$.

Let us define $\mathcal{A}_{1,\delta}$ to be the set of $c_{\alpha}$
satisfying 
\begin{enumerate}[nolistsep]
\item $\norm{c_{\alpha}}<a_{0}$
\item $G_{\alpha}(\xi_{j}(\alpha),x_{1})\neq0$ for $j\in\{1,\ldots,m\}$
and $x_{1}\in K$ with $\norm{x_{1}-\xi_{j}(\alpha}\geq3\delta$ for
each $j\in\{1,\ldots,m\}$.
\end{enumerate}
In this section and the next, we always assume $\delta<\Delta/3$.
\begin{lem}
If $D\geq2d+2$, every point of $\norm{c_{\alpha}}<a_{0}$ is a Lebesgue
point of $\mathcal{A}_{1,\delta}$ and therefore the probability of
$\mathcal{A}_{1,\delta}$ relative to the open ball $\norm{c_{\alpha}}<1$
is $1$.\label{lem:15-injectivity-A1}\end{lem}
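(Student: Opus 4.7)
The plan is to fix an arbitrary $c_\alpha^\ast \in B(0,a_0)$, pass to the re-centered coordinates of Section 5, and show directly that $c_\alpha=0$ (in re-centered coordinates) is a Lebesgue point of $\mathcal{A}_{1,\delta}$; once this is done for every $c_\alpha^\ast$, Lemma \ref{lem:14-Lebesgue-points} delivers the claimed probability-one statement. Concretely, I want to bound, relative to $B(0,\epsilon^{1/2})$, the measure of the bad set
\[
\mathcal{B}_\epsilon = \set{ c_\alpha : \norm{c_\alpha} \leq \epsilon^{1/2},\ \exists j,\ \exists x_1 \in K,\ \norm{x_1 - \Sigma_j(c_\alpha)} \geq 3\delta,\ \mathbb{G}_\alpha(\Sigma_j(c_\alpha), x_1) = 0 }
\]
and show it tends to $0$ as $\epsilon\to 0$. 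The choice of radius $\epsilon^{1/2}$ rather than $\epsilon$ is forced by the nonlinear transfer-of-volume Lemma \ref{lem:2-transfer-of-volume-nonlinear}.

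For $\epsilon$ small, Lemma \ref{lem:13-fixed-point-tracking} gives $\norm{\Sigma_j(c_\alpha) - \Sigma_j} \leq \delta$ whenever $\norm{c_\alpha} \leq \epsilon^{1/2}$, so the condition $\norm{x_1 - \Sigma_j(c_\alpha)} \geq 3\delta$ forces $x_1$ into the compact set $K_{j,2\delta} := \set{x \in K : \norm{x - \Sigma_j} \geq 2\delta}$. I cover $K_{j,2\delta}$ with $\leq C_K/\epsilon^d$ balls of radius $\epsilon$; by the uniform Lipschitz hypothesis on $\mathbb{G}_\alpha$, whenever an $x_1$ in such a ball witnesses the bad event, its center $x_1^\ast$ satisfies $\norm{\mathbb{G}_\alpha(\Sigma_j(c_\alpha),x_1^\ast)} \leq L\epsilon$. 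For fixed $x_1^\ast$, Taylor-expanding in $c_\alpha$ about $0$ and combining $\mathbb{F}_\alpha(\Sigma_j(c_\alpha)) = \mathbf{1}\,\pi_1\Sigma_j(c_\alpha)$ (since $\Sigma_j(c_\alpha)$ is a fixed point of $\Phi_\alpha$), Lemma \ref{lem:13-fixed-point-tracking}, and the expansion of $\mathbb{F}_\alpha(x_1^\ast)$ from Section 5 gives
\[
\mathbb{G}_\alpha(\Sigma_j(c_\alpha),x_1^\ast) = g_0 + A\,c_\alpha + h(c_\alpha),\qquad \norm{h(c_\alpha)} \leq L\norm{c_\alpha}^2,
\]
where row $1$ of $A$ is $\mathfrak{c}_j\,p_\alpha(\Sigma_j)^T$ with $\mathfrak{c}_j := \pi_1(I - \Psi(\Sigma_j))^{-1}\mathbf{e}_1$, and row $k$ ($k \geq 2$) is $\mathfrak{c}_j\,p_\alpha(\Sigma_j)^T - [p_\alpha(x_{k-1}^\ast) + \pi_1 \rho_{k-1}]^T$.

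The technical heart of the argument is a uniform lower bound on $\mathrm{rank}(A)$. The key observation is that because $\Sigma_j$ is a fixed point of the diffeomorphism $\Phi$, its only $\Phi$-preimage is $\Sigma_j$ itself; combined with the absence of periodic points of period $<2D$, this guarantees that $\Sigma_j, x_1^\ast, x_2^\ast, \ldots, x_{D-1}^\ast$ are $D$ distinct points whenever $x_1^\ast \in K_{j,2\delta}$. Subtracting row $1$ from each row $k \geq 2$ and then clearing the $\pi_1\rho_{k-1}$ terms as in the proof of Lemma \ref{lem:11-V(x1)-V(y1)-rank}(c), one sees that $A$ has rank $D$ when $\mathfrak{c}_j \neq 0$ and rank exactly $D-1$ when $\mathfrak{c}_j = 0$ (in the latter case rows $2$ through $D$ reduce to $-\mathbb{V}(x_1^\ast)$, which has rank $D-1$ by Lemma \ref{lem:11-V(x1)-V(y1)-rank}(c)). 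Either way $\mathrm{rank}(A) \geq D-1 \geq 2d+1$, so by continuity of singular values and compactness of $K_{j,2\delta}$, $\sigma_{D-1}(A)$ is bounded below uniformly by some $\sigma_\delta > 0$.

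Applying Lemma \ref{lem:2-transfer-of-volume-nonlinear} with $\mathfrak{r} = D-1$ at each fixed $x_1^\ast$ bounds the relative measure of $\set{c_\alpha : \norm{c_\alpha} \leq \epsilon^{1/2},\ \norm{\mathbb{G}_\alpha(\Sigma_j(c_\alpha),x_1^\ast)} \leq L\epsilon}$ by $D_\alpha!\,2^{D-1} L^{D-1}\epsilon^{(D-1)/2}/\sigma_\delta^{D-1}$. Summing over the $\leq m\,C_K/\epsilon^d$ pairs $(j,x_1^\ast)$ yields a relative measure for $\mathcal{B}_\epsilon$ of order $\epsilon^{(D-1)/2 - d}$, which tends to $0$ as $\epsilon\to 0$ precisely because $D \geq 2d+2$. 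Thus $c_\alpha^\ast$ is a Lebesgue point of $\mathcal{A}_{1,\delta}$, and Lemma \ref{lem:14-Lebesgue-points} finishes the proof. The main obstacle is accommodating the possibility $\mathfrak{c}_j = 0$; it is resolved by the fact that the bottom $D-1$ rows of $A$ already furnish rank $D-1$ without any help from the $\Sigma_j$ row, which is precisely what forces the hypothesis $D \geq 2d+2$ (one more than the naive Whitney-type $D \geq 2d+1$).
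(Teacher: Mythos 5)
Your overall structure matches the paper's proof — re-center at $c_\alpha^\ast$, use Lemma \ref{lem:13-fixed-point-tracking} and the expansion of Section 5 to get $\mathbb{G}_\alpha(\Sigma_j(c_\alpha),x_1)=g_0+Ac_\alpha+h(c_\alpha)$, bound $\mathrm{rank}(A)\geq D-1$ via Lemma \ref{lem:11-V(x1)-V(y1)-rank}, take a uniform singular-value floor by compactness, cover with $\epsilon$-balls, and apply Lemma \ref{lem:2-transfer-of-volume-nonlinear}. Your explicit two-case dichotomy in $\mathfrak{c}_j$ is a clean account of the rank claim.

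However, there is a genuine gap in your choice of compact set. You take $K_{j,2\delta}=\set{x\in K:\norm{x-\Sigma_j}\geq2\delta}$, which keeps $x_1$ away from $\Sigma_j$ only, and you claim that this plus ``absence of periodic points of period $<2D$'' forces $\Sigma_j,x_1^\ast,\ldots,x_{D-1}^\ast$ to be $D$ distinct points. This is false: the $m$ fixed points themselves are periodic of period $1<2D$, and $K_{j,2\delta}$ does not exclude $x_1^\ast=\Sigma_i$ for $i\neq j$. In that case $x_1^\ast=x_2^\ast=\cdots=x_{D-1}^\ast$, every $\pi_1\rho_k$ row of $\mathbb{V}(\Sigma_i)$ is a scalar multiple of $p_\alpha(\Sigma_i)$, and $\mathrm{rank}\,\mathbb{V}(\Sigma_i)=1$. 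Consequently $\sigma_{D-1}(A)$ degenerates to $0$ on part of $K_{j,2\delta}$ and the uniform floor $\sigma_\delta>0$ you invoke does not exist; Lemma \ref{lem:2-transfer-of-volume-nonlinear} then gives no useful bound. Note also that your superset $\mathcal{B}_\epsilon$ correspondingly only requires $\norm{x_1-\Sigma_j(c_\alpha)}\geq3\delta$ for the one index $j$, whereas the definition of $\mathcal{A}_{1,\delta}$ requires $\norm{x_1-\xi_i(\alpha)}\geq3\delta$ for every $i\in\set{1,\ldots,m}$. The fix is exactly what the paper does: work on $\mathcal{K}_{1,\delta}=\set{x\in K:\norm{x-\Sigma_i}\geq2\delta\text{ for all }i}$, which is the set the hypothesis actually permits and on which the iterates $x_1,\ldots,x_{D-1}$ are guaranteed distinct and distinct from $\Sigma_j$. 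With that correction, the rest of your argument (rank $\geq D-1$, compactness, covering count $C_K/\epsilon^d$, nonlinear transfer of volume, $\epsilon^{(D-1)/2-d}\to0$ under $D\geq2d+2$) is sound and identical in substance to the paper.
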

\begin{proof}
Pick $c_{\alpha}^{\ast}$ satisfying $\norm{c_{\alpha}^{\ast}}<a_{0}$.
We will use an argument centered at $c_{\alpha}^{\ast}$ to show that
$c_{\alpha}^{\ast}$ is a Lebesgue point of $\mathcal{A}_{1,\delta}$.

Pick $a_{1}>0$ so small that $\norm{\Sigma_{j}(\alpha)-\Sigma_{j}}<\delta$
for $\norm{c_{\alpha}}\leq a_{1}$. Define $\mathcal{K}_{1,\delta}$
as the set of $x_{1}\in K$ such that $\norm{x_{1}-\Sigma_{j}}\geq2\delta$
for each $j\in\{1,\ldots,m\}$.

Let us look at $\mathbb{G}_{\alpha}(\Sigma_{j}(c_{\alpha}),x_{1})$.
Using Lemma \ref{lem:13-fixed-point-tracking} and the definition
of $\mathbb{V}(x_{1})$, we get 
\begin{equation}
\mathbb{G}_{\alpha}(\Sigma_{j}(c_{\alpha}),x_{1})=\mathbb{G}_{0}(\Sigma_{j},x_{1})+\mathcal{M}(c_{\alpha})+\mathcal{O}(c_{\alpha}^{2})\label{eq:lem15-1}
\end{equation}
with $\mathcal{M}=J\mathcal{V}$ and 
\[
J=\left(\begin{array}{ccccc}
 &  &  &  & 1\\
-1 &  &  &  & 1\\
 & -1 &  &  & 1\\
 &  & \ddots &  & \vdots\\
 &  &  & -1 & 1
\end{array}\right),\:\:\mathcal{V}=\left(\begin{array}{c}
\mathbb{V}(x_{1})\\
\pi_{1}(I-\Psi(\Sigma_{j}))^{-1}\mathbf{e}_{1}p_{\alpha}(\Sigma_{j})
\end{array}\right).
\]
There are two cases here. Suppose $\pi_{1}(I-\Psi(\Sigma_{j}))^{-1}\mathbf{e}_{1}$
is nonzero. Then by Lemma \ref{lem:11-V(x1)-V(y1)-rank} (b), the
rank of $\mathcal{V}$ is equal to the number of its rows. Therefore,
the rank $J\mathcal{V}$ is $D$. If in fact the corner entry $\pi_{1}(I-\Psi(\Sigma_{j}))^{-1}\mathbf{e}_{1}$
is zero, we can drop the last column and first row of $J$ and conclude
that the rank of $J\mathcal{V}$ is $D-1$. In either case, the rank
of $\mathcal{M}$ is $D-1$ or greater.

Define $\sigma_{\delta}=\min\sigma_{D-1}(\mathcal{M})$, where the
minimum is over $x_{1}\in\mathcal{K}_{1,\delta}$ and $\norm{c_{\alpha}}\leq a_{1}$.
Cover $\mathcal{K}_{1}(\delta)$ with $C_{K}/\epsilon^{d}$ $\epsilon$-balls. 
\begin{description}
\item [{Assumption}] about $L$ (3): In (\ref{eq:lem15-1}), the $\mathcal{O}(c_{\alpha}^{2})$
term is upper bounded by $L\norm{c_{\alpha}}^{2}$. Like the earlier
assumptions about $L$, this assumption too is a direct consequence
of compactness. The earlier assumptions used $L$ as a bound on Lipshitz
constants. Here $L$ is used as a bound on the Taylor series remainder.
\end{description}

Now suppose $\mathbb{G}_{\alpha}(\Sigma_{j}(c_{\alpha}),x_{1})=0$
for some $j\in\set{1,\ldots,m}$ and some $x_{1}\in\mathcal{K}_{1,\delta}$.
Because the Lipshitz constant of $\mathbb{G}_{\alpha}(\Sigma_{j}(c_{\alpha}),x_{1})$
with respect to $x_{1}$ is bounded by $L$, we must have $\norm{\mathbb{G}_{\alpha}(\Sigma_{j}(c_{\alpha}),x_{1})}\leq L\epsilon$
at an $x_{1}$ that is at the center of one the balls covering $\mathcal{K}_{1,\delta}$. 

Applying the nonlinear transfer of volume Lemma \ref{lem:2-transfer-of-volume-nonlinear}
with $\mathfrak{r}\leftarrow D-1$ and $\sigma\leftarrow\sigma_{\delta}$,
we find that the probability of $\norm{\mathbb{G}_{\alpha}(\Sigma_{j}(c_{\alpha}),x_{1})}\leq L\epsilon$
relative to $\norm{c_{\alpha}}\leq\epsilon^{1/2}<a_{1}$ is upper
bounded by 
\[
D_{\alpha}!2^{D-1}L^{D-1}\epsilon^{(D-1)/2}\Bigl/\sigma_{\delta}^{D-1}.
\]
Because the number of fixed points is $m$ and the number balls covering
$\mathcal{K}_{1,\delta}$ is $C_{K}/\epsilon^{d}$, the probability
of $\mathbb{G}_{\alpha}(\Sigma_{j}(c_{\alpha}),x_{1})=0$ for some
$j\in\set{1,\ldots,m}$ and some $x_{1}\in\mathcal{K}_{1,\delta}$
relative to $\norm{c_{\alpha}}\leq\epsilon^{1/2}$ is upper bounded
by 
\[
m\times\frac{C_{K}}{\epsilon^{d}}\times\frac{D_{\alpha}!2^{D-1}L^{D-1}\epsilon^{(D-1)/2}}{\sigma_{\delta}^{D-1}}.
\]
Evidently, the probability goes to zero as $\epsilon\rightarrow0$
if $D\geq2d+2$. Thus, we have shown that $c_{\alpha}^{\ast}$ is
a Lebesgue point of $\mathcal{A}_{1,\delta}$ proving the lemma.
\end{proof}
Now define $\mathcal{A}_{2,\delta}$ to be the set of $c_{\alpha}$
satisfying 
\begin{enumerate}[nolistsep]
\item $\norm{c_{\alpha}}<a_{0}$
\item $G_{\alpha}(x_{1},\phi_{\alpha}(x_{1}))\neq0$ for $x_{1}\in K$ with
$\norm{x_{1}-\xi_{j}(\alpha}\geq3\delta$ for each $j\in\{1,\ldots,m\}$.\end{enumerate}
\begin{lem}
If $D\geq2d+1$, every point of $\norm{c_{\alpha}}<a_{0}$ is a Lebesgue
point of $\mathcal{A}_{2,\delta}$ and therefore the probability of
$\mathcal{A}_{2,\delta}$ relative to $\norm{c_{\alpha}}<a_{0}$ is
$1$.\label{lem:16-injectivity-A2}\end{lem}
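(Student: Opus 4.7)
The plan is to mimic the proof of Lemma \ref{lem:15-injectivity-A1} almost step for step, with the critical change that the rank of the relevant perturbation matrix will be $D$ (rather than $D-1$), so that the bound $D\ge 2d+1$ will just suffice. Pick $c_\alpha^\ast$ with $\norm{c_\alpha^\ast}<a_0$ and re-center, choosing $a_1>0$ small enough that $\norm{\Sigma_j(c_\alpha)-\Sigma_j}<\delta$ for all $\norm{c_\alpha}\le a_1$. Define $\mathcal{K}_{2,\delta}$ to be the compact set of $x_1\in K$ with $\norm{x_1-\Sigma_j}\ge 2\delta$ for every $j$; as in the previous proof, it suffices to show that $\mathbb{G}_\alpha(x_1,\Phi_\alpha(x_1))\ne 0$ on $\mathcal{K}_{2,\delta}$ with probability one relative to $\norm{c_\alpha}\le a_1$. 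Since $x_1\in \mathcal{K}_{2,\delta}$ is away from every fixed point of $\Phi$ and (by assumption 3 of section 5) there are no other periodic points of period less than $2D$, the iterates $x_1,\ldots,x_{2D-1}$ under $\Phi$ are distinct. In particular $x_1,\ldots,x_D$ are distinct, and $\mathcal{K}_{2,\delta}$ is compact.

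The key calculation is the linearization. Because iterating $\Phi_\alpha$ starting from $\Phi_\alpha(x_1)$ produces $\tilde x_2,\tilde x_3,\ldots,\tilde x_{D+1}$, the $j$th component of $\mathbb{G}_\alpha(x_1,\Phi_\alpha(x_1))$ equals $\pi_1\tilde x_j-\pi_1\tilde x_{j+1}$. Substituting the expansion (\ref{eq:rho-definition}) for $\tilde x_j$ and $\tilde x_{j+1}$ yields
\[
\mathbb{G}_\alpha(x_1,\Phi_\alpha(x_1))=\mathbb{G}_0(x_1,\Phi(x_1))+\mathcal{M}(c_\alpha)+\mathcal{O}(c_\alpha^2),
\]
where $\mathcal{M}=J\mathcal{V}$ with $\mathcal{V}$ the $D\times D_\alpha$ matrix whose rows are $p_\alpha(x_j)+\pi_1\rho_j$ for $j=1,\ldots,D$ (with $\rho_1=0$), and $J$ the $D\times D$ lower bidiagonal matrix with $-1$ on the diagonal and $+1$ on the subdiagonal (first row $(-1,0,\ldots,0)$, subsequent rows exhibiting the telescoping $\mathbf{v}_{j-1}-\mathbf{v}_j$). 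The matrix $J$ is triangular with nonzero diagonal, hence invertible. By Lemma \ref{lem:11-V(x1)-V(y1)-rank}(c), applied with $D^+=D+1\le 2D$, the matrix $\mathcal{V}$ has rank equal to the number of its rows, namely $D$. Therefore $\mathcal{M}$ has rank exactly $D$ at every $(x_1,c_\alpha)$ with $x_1\in \mathcal{K}_{2,\delta}$ and $\norm{c_\alpha}\le a_1$.

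With the rank in hand, the rest is routine. Set $\sigma_\delta=\min\sigma_D(\mathcal{M})$, the minimum taken over $x_1\in\mathcal{K}_{2,\delta}$ and $\norm{c_\alpha}\le a_1$; compactness and continuity give $\sigma_\delta>0$. Cover $\mathcal{K}_{2,\delta}$ with $C_K/\epsilon^d$ $\epsilon$-balls. If $\mathbb{G}_\alpha(x_1,\Phi_\alpha(x_1))=0$ for some $x_1\in\mathcal{K}_{2,\delta}$, the Lipshitz bound forces $\norm{\mathbb{G}_\alpha(x_1,\Phi_\alpha(x_1))}\le L\epsilon$ at the center of one of the covering balls. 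Apply Lemma \ref{lem:2-transfer-of-volume-nonlinear} with $\mathfrak{r}\leftarrow D$ and $\sigma\leftarrow\sigma_\delta$ and multiply by the number of balls to obtain an upper bound
\[
m\cdot\frac{C_K}{\epsilon^d}\cdot\frac{D_\alpha!\,2^D L^D \epsilon^{D/2}}{\sigma_\delta^D}
\]
(no factor of $m$ is actually needed here, but the bound is of order $\epsilon^{D/2-d}$), which tends to $0$ as $\epsilon\to 0$ precisely when $D\ge 2d+1$. Thus $c_\alpha^\ast$ is a Lebesgue point of $\mathcal{A}_{2,\delta}$, and Lemma \ref{lem:14-Lebesgue-points} delivers the full-measure conclusion.

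The main obstacle, and the place that deserves careful writing, is the rank argument: one must correctly identify the telescoping $J\mathcal{V}$ factorization and recognize that the $D$th row involves $p_\alpha(x_D)+\pi_1\rho_D$, requiring Lemma \ref{lem:11-V(x1)-V(y1)-rank}(c) with $D^+=D+1$ rather than $D^+=D$. A minor bookkeeping nuisance is that the iterate $\tilde x_{D+1}$ appears in the expansion, so one should either enlarge the invariant region assumption in section 5 by one iterate or observe that the perturbative argument only needs existence and the $C^2$ Taylor remainder bound for $\tilde x_{D+1}$ for $\norm{c_\alpha}\le a_1$, which follows from the same compactness arguments used elsewhere.
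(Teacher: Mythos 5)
Your proposal is correct and follows essentially the same route as the paper's proof: re-center at $c_\alpha^\ast$, exhibit the telescoping factorization $\mathcal{M}=J\mathcal{V}$ with $J$ lower bidiagonal and $\mathcal{V}$ the $D$-row matrix with entries $p_\alpha(x_j)+\pi_1\rho_j$, invoke Lemma~\ref{lem:11-V(x1)-V(y1)-rank}(c) to get rank $D$, then apply the compactness/covering argument and Lemma~\ref{lem:2-transfer-of-volume-nonlinear}. Your flagging of the $\tilde x_{D+1}$ bookkeeping (the iterate count in the section~6 setup must run one past $D$) is a small but real point the paper silently elides.
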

\begin{proof}
As before, we pick $c_{\alpha}^{\ast}$ satisfying $\norm{c_{\alpha}^{\ast}}<a_{0}$
and will give an argument centered at $c_{\alpha}^{\ast}$ to show
that $c_{\alpha}^{\ast}$ is a Lebesgue point of $\mathcal{A}_{1,\delta}$.
As before, pick $a_{1}>0$ so small that $\norm{\Sigma_{j}(\alpha)-\Sigma_{j}}<\delta$
for $\norm{c_{\alpha}}\leq a_{1}$. As before, define $\mathcal{K}_{1,\delta}$
as the set of $x_{1}\in K$ such that $\norm{x_{1}-\Sigma_{j}}\geq2\delta$
for each $j\in\{1,\ldots,m\}$.

Using (\ref{eq:rho-defn-brief}), we get 
\begin{equation}
\mathbb{G}_{\alpha}(\tilde{x}_{1},\tilde{x}_{2})=\left(\begin{array}{c}
\pi_{1}x_{1}-\pi_{1}x_{2}\\
\vdots\\
\pi_{1}x_{D}-\pi_{1}x_{D+1}
\end{array}\right)+\mathcal{M}(c_{\alpha})+\mathcal{O}(c_{\alpha}^{2})\label{eq:lem16-1}
\end{equation}
with $\mathcal{M}=J\mathcal{V}$ and 
\[
J=\left(\begin{array}{cccc}
-1\\
1 & -1\\
 & 1 & -1\\
 &  & 1 & -1
\end{array}\right),\:\:\mathcal{V}=\left(\begin{array}{c}
p_{\alpha}(x_{1})\\
p_{\alpha}(x_{2})+\pi_{1}\rho_{2}\\
\vdots\\
p_{\alpha}(x_{D})+\pi_{1}\rho_{D}
\end{array}\right).
\]
By Lemma \ref{lem:11-V(x1)-V(y1)-rank} (c), the rank of $\mathcal{V}$
is equal to the number of its rows. Therefore, the rank of $\mathcal{M}=J\mathcal{V}$
is equal to $D$.

Define $\sigma_{\delta}=\min\sigma_{D}(\mathcal{M})$, where the minimum
is over $x_{1}\in\mathcal{K}_{1,\delta}$ and $\norm{c_{\alpha}}\leq a_{1}$.
Cover $\mathcal{K}_{1}(\delta)$ with $C_{K}/\epsilon^{d}$ $\epsilon$-balls.
\begin{description}
\item [{Assumption}] about $L$ (4): In (\ref{eq:lem16-1}), the $\mathcal{O}(c_{\alpha}^{2})$
term is upper bounded by $L\norm{c_{\alpha}^{2}}$. The first two
assumptions about $L$ are both obtained from upper bounds on the
derivative of $F_{\alpha}(x)$ or $\mathbb{F}_{\alpha}(x)$ with respect
to $x$. This assumption as well as the preceding one are obtained
from upper bounds on the second derivative. In all cases, the assumptions
are direct consequences of the compactness of $K$ and the ball $\norm{c_{\alpha}}\leq a_{0}$.
\end{description}

If $\mathbb{G}_{\alpha}(\tilde{x}_{1},\tilde{x}_{2})=0$ for some
$x_{1}\in\mathcal{K}_{1,\delta}$, we must have $\norm{\mathbb{G}_{\alpha}(\tilde{x}_{1},\tilde{x}_{2})}\leq L\epsilon$
for some $x_{1}$ that is the center of one of the balls covering
$\mathcal{K}_{1,\delta}$. Using the nonlinear transfer of volume
Lemma \ref{lem:2-transfer-of-volume-nonlinear}, we find the probability
of $\mathbb{G}_{\alpha}(\tilde{x}_{1},\tilde{x}_{2})=0$ for some
$x_{1}\in\mathcal{K}_{1,\delta}$ relative to the ball $\norm{c_{\alpha}}\leq\epsilon^{1/2}<a_{1}$
to be upper bounded by 
\[
\frac{C_{K}}{\epsilon^{d}}\times\frac{D_{\alpha}!2^{D}L^{D}\epsilon^{D/2}}{\sigma_{\delta}^{D}}.
\]
The limit of this probability as $\epsilon\rightarrow0$ is zero.
It follows that $c_{\alpha}^{\ast}$ is a Lebesgue point of $\mathcal{A}_{2,\delta}$
completing the proof of this lemma.
\end{proof}
Lemma \ref{lem:16-injectivity-A2} allows us to conclude that the
delay vectors of $x_{1}$ and $\phi_{\alpha}(x_{1})$ do not coincide
typically if $x_{1}$is a little removed from the fixed points of
$\phi_{\alpha}$. More generally, we need to argue that the delay
vectors of $x_{1}$ and $\phi_{\alpha}^{k-1}(x)$ do not coincide
for $k=3,\ldots,D$. To make that argument, we define $\mathcal{A}_{k,\delta}$
to be the set of $c_{\alpha}$ satisfying
\begin{enumerate}[nolistsep]
\item $\norm{c_{\alpha}}<a_{0}$
\item $G_{\alpha}(x_{1},\phi_{\alpha}^{k-1}(x_{1}))\neq0$ for $x_{1}\in K$
with $\norm{x_{1}-\xi_{j}(\alpha}\geq3\delta$ for each $j\in\{1,\ldots,m\}$
\end{enumerate}
for $k=2,\ldots,D$. 
\begin{lem}
For $D\geq2d+1$ and $k=2,\ldots,D$, every point of $\norm{c_{\alpha}}<a_{0}$
is a Lebesgue point of $\mathcal{A}_{k,\delta}$ and therefore the
probability of $\mathcal{A}_{k,\delta}$ relative to the ball $\norm{c_{\alpha}}<a_{0}$
is $1$.\label{lem:17-injectivity-Ak}\end{lem}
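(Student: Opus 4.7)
The plan is to mimic Lemma \ref{lem:16-injectivity-A2}, with all changes concentrated in the combinatorics of a single matrix $J$. First I pick $c_\alpha^\ast$ with $\norm{c_\alpha^\ast}<a_0$, pass to re-centered notation, and choose $a_1>0$ so that $\norm{\Sigma_j(c_\alpha)-\Sigma_j}<\delta$ for $\norm{c_\alpha}\leq a_1$. Let $\mathcal{K}_{1,\delta}$ denote the set of $x_1\in K$ with $\norm{x_1-\Sigma_j}\geq 2\delta$ for every $j$. For such $x_1$, the point is not a fixed point of $\Phi$, and by assumption 3 of Section~5 together with the bijectivity of $\Phi$, the iterates $x_1,\ldots,x_{2D-1}$ are pairwise distinct. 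The aim is to show that $c_\alpha^\ast$ is a Lebesgue point of $\mathcal{A}_{k,\delta}$ and conclude via Lemma \ref{lem:14-Lebesgue-points}.

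Using (\ref{eq:rho-defn-brief}) and the conventions for $\tilde{x}$, the $j$-th component of $\mathbb{G}_\alpha(\tilde{x}_1,\tilde{x}_k)=\mathbb{F}_\alpha(x_1)-\mathbb{F}_\alpha(\Phi_\alpha^{k-1}(x_1))$ has first-order term in $c_\alpha$ equal to $(p_\alpha(x_{j-1})+\pi_1\rho_{j-1})-(p_\alpha(x_{j+k-2})+\pi_1\rho_{j+k-2})$ for $j\geq 2$, and equal to $-(p_\alpha(x_{k-1})+\pi_1\rho_{k-1})$ for $j=1$. This gives
\[
\mathbb{G}_\alpha(\tilde{x}_1,\tilde{x}_k) = \mathbb{G}_0(x_1,x_k) + \mathcal{M}(c_\alpha) + \mathcal{O}(c_\alpha^2), \qquad \mathcal{M}=J\mathcal{V},
\]
where $\mathcal{V}$ has rows $p_\alpha(x_1),\,p_\alpha(x_2)+\pi_1\rho_2,\ldots,p_\alpha(x_{D+k-2})+\pi_1\rho_{D+k-2}$, and $J$ is the $D\times(D+k-2)$ matrix whose row $j$ (for $j\geq 2$) has $+1$ in column $j-1$ and $-1$ in column $j+k-2$, while row $1$ has only a $-1$ in column $k-1$. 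Since $D+k-2\leq 2D-2$ and $x_1,\ldots,x_{D+k-2}$ are distinct, Lemma \ref{lem:11-V(x1)-V(y1)-rank}(c) makes $\mathcal{V}$ of full row rank. For $J$, I would look at the submatrix on columns $k-1,k,\ldots,D+k-2$: the row-$j$ entry $-1$ lies in column $j+k-2$, which is diagonal position $j$ of this submatrix, and the only other nonzero entries (the $+1$'s from rows $j\geq k$ at column $j-1$) sit $k-1$ columns to the left of the diagonal. The submatrix is thus lower triangular with determinant $(-1)^D\neq 0$, so $J$ and hence $\mathcal{M}$ have rank $D$.

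The rest is a routine replay of Lemma \ref{lem:16-injectivity-A2}: set $\sigma_\delta=\min\sigma_D(\mathcal{M})$ over $x_1\in\mathcal{K}_{1,\delta}$ and $\norm{c_\alpha}\leq a_1$, which is positive by compactness, cover $\mathcal{K}_{1,\delta}$ by $C_K/\epsilon^d$ $\epsilon$-balls, and use the Lipshitz-in-$x_1$/quadratic-remainder-in-$c_\alpha$ bound $L$ to argue that $\mathbb{G}_\alpha(\tilde{x}_1,\tilde{x}_k)=0$ somewhere on $\mathcal{K}_{1,\delta}$ forces $\norm{\mathbb{G}_\alpha}\leq L\epsilon$ at one of the ball centers. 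Lemma \ref{lem:2-transfer-of-volume-nonlinear} with $\mathfrak{r}\leftarrow D$ then bounds the offending probability relative to $\norm{c_\alpha}\leq\epsilon^{1/2}$ by $(C_K/\epsilon^d)\cdot D_\alpha!\,2^D L^D\epsilon^{D/2}/\sigma_\delta^D$, which tends to $0$ as $\epsilon\to 0$ because $D\geq 2d+1$. I expect the rank of $J$ to be the main obstacle: the row-$1$ irregularity (no $+1$) and the overlapping $\pm 1$ columns when $k\leq D$ make direct linear-dependence checks awkward, but isolating the lower-triangular submatrix on columns $k-1$ through $D+k-2$ settles the matter cleanly, after which everything reduces to the $k=2$ case.
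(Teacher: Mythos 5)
Your proof is correct and follows the paper's route exactly: factor $\mathcal{M}=J\mathcal{V}$ for the re-centered difference $\mathbb{G}_{\alpha}(\tilde{x}_{1},\tilde{x}_{k})$, apply Lemma \ref{lem:11-V(x1)-V(y1)-rank}(c) to get full row rank of $\mathcal{V}$ since $D+k-2\leq2D$, deduce $\operatorname{rank}\mathcal{M}=D$, and run the cover-and-nonlinear-transfer argument with $\mathfrak{r}\leftarrow D$, just as in Lemma \ref{lem:16-injectivity-A2}. The paper's own proof is essentially a one-liner that records only the enlarged $\mathcal{V}$; your explicit general $J$ (row $1$ carries only $-1$ in column $k-1$; row $j\geq2$ carries $+1$ in column $j-1$ and $-1$ in column $j+k-2$) together with the lower-triangular $D\times D$ submatrix on columns $k-1,\ldots,D+k-2$ supplies the rank verification that the paper leaves implicit, and this is worth spelling out since $J$ genuinely changes shape with $k$. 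One trivial slip: the periodicity and hyperbolicity assumptions you invoke are the enumerated conditions of Section~6, not Section~5.
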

\begin{proof}
The proof is almost identical to that of the previous lemma, which
is a special case. The only significant difference occurs in the definition
of $\mathcal{V}$. In the general case, 
\[
\mathcal{V}=\left(\begin{array}{c}
p_{\alpha}(x_{1})\\
p_{\alpha}(x_{2})+\pi_{1}\rho_{2}\\
\vdots\\
p_{\alpha}(x_{D+k-2})+\pi_{1}\rho_{D+k-2}
\end{array}\right).
\]
Note that Lemma \ref{lem:11-V(x1)-V(y1)-rank} (c) still applies,
implying the rank of $\mathcal{V}$ to be equal to the number of its
rows, because $D+k-2\leq2D$.
\end{proof}
The final lemma of this section pertains to the set $\mathcal{A}_{xy}(\delta)$.
It is defined as the set of all $c_{\alpha}$ such that $\norm{c_{\alpha}}<a_{0}$
and $G_{\alpha}(x_{1},y_{1})\neq0$ provided 
\begin{enumerate}[nolistsep]
\item $x_{1},y_{1}\in K$
\item $\norm{x_{1}-y_{1}}\geq\delta$ (which excludes the diagonal of $K\times K$)
\item $\norm{x_{1}-\xi_{j}(\alpha)}\geq3\delta$ and $\norm{y_{1}-\xi_{j}(\alpha)}\geq3\delta$
for $j\in\{1,\ldots,m\}$(so that both $x_{1}$ and $y_{1}$ stay
away from fixed points)
\item $\norm{x_{1}-\phi^{k-1}(y_{1})}\geq2\delta$ and $\norm{y_{1}-\phi^{k-1}(x_{1})}\geq2\delta$
for $k=2,\ldots,D$ (so that $x_{1}$ does not come too close to the
iterates of $y_{1}$ and vice versa).\end{enumerate}
\begin{lem}
For $D\geq4d+2$, every point of $\norm{c_{\alpha}}<a_{0}$ is a Lebesgue
point of $\mathcal{A}_{xy,\delta}$ and therefore the probability
of $\mathcal{A}_{xy,\delta}$ relative to the ball $\norm{c_{\alpha}}<a_{0}$
is $1$.\label{lem:18-injectivity-xy}\end{lem}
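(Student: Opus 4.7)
Following the template of Lemmas~\ref{lem:15-injectivity-A1}--\ref{lem:17-injectivity-Ak}, I fix $c_{\alpha}^{\ast}$ with $\|c_{\alpha}^{\ast}\|<a_{0}$, recenter around it, and verify that $c_{\alpha}^{\ast}$ is a Lebesgue point of $\mathcal{A}_{xy,\delta}$. Pick $a_{1}>0$ so small that $\|\Sigma_{j}(c_{\alpha})-\Sigma_{j}\|<\delta$ for $\|c_{\alpha}\|\le a_{1}$ and that the first $D$ iterates of $\Phi_{\alpha}$ starting in $K$ remain within $\delta$ of the corresponding $\Phi$-iterates. Let $\mathcal{K}_{xy,\delta}^{\ast}\subset K\times K$ denote the compact set obtained by recentering conditions~(2)--(4) around $c_{\alpha}^{\ast}$ and relaxing the separation thresholds by $\delta$, so that every pair $(x_{1},y_{1})$ witnessing $c_{\alpha}^{\ast}+c_{\alpha}\notin\mathcal{A}_{xy,\delta}$ for some $\|c_{\alpha}\|\le a_{1}$ falls inside $\mathcal{K}_{xy,\delta}^{\ast}$.

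The central computation is the first-order expansion
\[
\mathbb{G}_{\alpha}(x_{1},y_{1})=\mathbb{G}_{0}(x_{1},y_{1})+\mathcal{M}(c_{\alpha})+\mathcal{O}(c_{\alpha}^{2}),\qquad\mathcal{M}=\binom{0}{\mathbb{V}(x_{1})-\mathbb{V}(y_{1})},
\]
obtained by subtracting the expansion $\mathbb{F}_{\alpha}(x_{1})=\mathbb{F}_{0}(x_{1})+\binom{0}{\mathbb{V}(x_{1})}(c_{\alpha})+\mathcal{O}(c_{\alpha}^{2})$ from its $y_{1}$-counterpart. On $\mathcal{K}_{xy,\delta}^{\ast}$ the $2(D-1)$ iterates $x_{1},\ldots,x_{D-1},y_{1},\ldots,y_{D-1}$ (under $\Phi$) are pairwise distinct: distinctness within each orbit uses assumption~3 of Section~5 (no periodic points of period $<2D$ other than fixed points), while distinctness across orbits follows from condition~(4). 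Lemma~\ref{lem:11-V(x1)-V(y1)-rank}(a) therefore gives $\binom{\mathbb{V}(x_{1})}{\mathbb{V}(y_{1})}$ full row rank $2(D-1)$, and the factorization $\mathbb{V}(x_{1})-\mathbb{V}(y_{1})=(I_{D-1},\,-I_{D-1})\binom{\mathbb{V}(x_{1})}{\mathbb{V}(y_{1})}$ combined with Lemma~\ref{lem:7-linalg} pins the rank of $\mathcal{M}$ at exactly $D-1$.

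The remainder is a standard transfer-of-volume argument. Set $\sigma_{\delta}=\min\sigma_{D-1}(\mathcal{M})$ over $(x_{1},y_{1})\in\mathcal{K}_{xy,\delta}^{\ast}$ and $\|c_{\alpha}\|\le a_{1}$; this minimum is strictly positive by compactness and continuity. Cover $\mathcal{K}_{xy,\delta}^{\ast}$ with $C_{K}/\epsilon^{2d}$ $\epsilon$-balls centered in $\mathcal{K}_{xy,\delta}^{\ast}$, using assumption about $C_{K}$~(2). If $\mathbb{G}_{\alpha}$ vanishes somewhere in $\mathcal{K}_{xy,\delta}^{\ast}$, the Lipshitz bound forces $\|\mathbb{G}_{\alpha}\|\le L\epsilon$ at a ball center, and Lemma~\ref{lem:2-transfer-of-volume-nonlinear} with $\mathfrak{r}\leftarrow D-1$ and $\sigma\leftarrow\sigma_{\delta}$ then bounds the probability at each center (relative to $\|c_{\alpha}\|\le\epsilon^{1/2}<a_{1}$) by $D_{\alpha}!\,2^{D-1}L^{D-1}\epsilon^{(D-1)/2}/\sigma_{\delta}^{D-1}$. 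Summing over the cover yields a total bound of order $\epsilon^{(D-1)/2-2d}$, which tends to $0$ as $\epsilon\to 0$ precisely when $D\ge 4d+2$. This certifies the Lebesgue point property, and Lemma~\ref{lem:14-Lebesgue-points} delivers the full-probability statement.

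The principal subtlety is the rank computation. Because $\pi_{1}\tilde{x}_{1}=\pi_{1}x_{1}$ carries no $c_{\alpha}$-dependence, the top row of $\mathcal{M}$ is identically zero and one can only harvest rank $D-1$ rather than the $D$ that would be available if the observation function itself were perturbed. Coupled with the $2d$-dimensional box-counting cover of $K\times K$, this loss of one in rank is exactly what raises the embedding dimension threshold from the naive $4d+1$ to $4d+2$.
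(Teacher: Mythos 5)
Your proposal is correct and follows essentially the same route as the paper's proof: recenter at $c_{\alpha}^{\ast}$, expand $\mathbb{G}_{\alpha}$ to first order, observe that the top row of $\mathcal{M}$ vanishes while the bottom $D-1$ rows equal $\mathbb{V}(x_1)-\mathbb{V}(y_1)$ (your factorization through $(I_{D-1},-I_{D-1})$ is exactly the paper's $J\mathcal{V}$ with Lemma~\ref{lem:7-linalg}), then cover $\mathcal{K}_{xy,\delta}$ with $C_K/\epsilon^{2d}$ balls and apply Lemma~\ref{lem:2-transfer-of-volume-nonlinear} with $\mathfrak{r}=D-1$. Your bound has the exponent $\sigma_{\delta}^{D-1}$, which is the correct form (the paper's displayed $\sigma_{\delta}^{D}$ is a typo), and your identification of the rank loss at the top row as the source of the $4d+2$ threshold matches the paper's reasoning; the only cosmetic slip is the reference to "Section~5" for the periodic-point assumption, which lives in Section~6.
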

\begin{proof}
Again the argument begins by centering at some $c_{\alpha}^{\ast}$
satisfying $\norm{c_{\alpha}^{\ast}}<a_{0}$. However, the conditions
on $a_{1}$ this time are different. The radius $a_{1}$ must be so
small that for $\norm{c_{\alpha}}\leq a_{1}$ the following conditions
are satisfied:
\begin{enumerate}[nolistsep]
\item  $\norm{\Sigma_{j}(\alpha)-\Sigma_{j}}<\delta$
\item For any $x_{1}\in K$, $\norm{\tilde{x}_{j}-x_{j}}\leq\delta$ for
$j=1,\ldots,D$.
\end{enumerate}
The set $\mathcal{K}_{xy,\delta}$ is defined as the set of $(x_{1},y_{1})\in K\times K$
satisfying the following conditions:
\begin{enumerate}[nolistsep]
\item $\norm{x_{1}-\Sigma_{j}}\geq2\delta$ and $\norm{y_{1}-\Sigma_{j}}\geq2\delta$
for $j\in\{1,\ldots,m\}$
\item $\norm{x_{1}-y_{1}}\geq\delta$
\item $\norm{x_{1}-y_{j}}\geq\delta$ and $\norm{y_{1}-x_{j}}\geq\delta$
for $j\in\{2,\ldots,m\}$.
\end{enumerate}

We have 
\begin{equation}
\mathbb{G}_{\alpha}(x_{1},y_{1})=\mathbb{G}_{0}(x_{1},y_{1})+\mathcal{M}(c_{\alpha})+\mathcal{O}(c_{\alpha}^{2}).\label{eq:lem18-1}
\end{equation}
The top row of $\mathcal{M}$ is zero. The rest of the $D-1$ rows
below are given by $J\mathcal{V}$ 
\[
J=\left(\begin{array}{cccccc}
1 &  &  & -1\\
 & \ddots\\
 &  & 1 &  &  & -1
\end{array}\right),\:\:\mathcal{V}=\left(\begin{array}{c}
\mathbb{V}(x_{1})\\
\mathbb{V}(y_{1})
\end{array}\right).
\]
By Lemma \ref{lem:11-V(x1)-V(y1)-rank}, the rank of $\mathcal{V}$
is equal to the number of its rows. Therefore the ranks of $J\mathcal{V}$
and $\mathcal{M}$ are both equal to $D-1$.

Define $\sigma_{\delta}=\min\sigma_{D-1}(\mathcal{M})$, where the
minimum is over $(x_{1},y_{1})\in\mathcal{K}_{xy,\delta}$ and $\norm{c_{\alpha}}\leq a_{1}$.
Cover $\mathcal{K}_{xy}$ with $C_{K}/\epsilon^{2d}$ balls. 
\begin{description}
\item [{Assumption}] about $L$ (5): The $\mathcal{O}(c_{\alpha}^{2})$
term in (\ref{eq:lem18-1}) is upper bounded by $L\norm{c_{\alpha}}^{2}$.
\end{description}

Suppose $\mathbb{G}_{\alpha}(x_{1},y_{1})=0$ for some $(x_{1},y_{1})\in\mathcal{K}_{xy,\delta}$.
Then we must have $\norm{\mathbb{G}_{\alpha}(x_{1},y_{1})}\leq L\epsilon$
for an $(x_{1},y_{1})$ that is at the center of one of the balls
covering $\mathcal{K}_{xy,\delta}$. Applying the nonlinear transfer
of volume Lemma \ref{lem:2-transfer-of-volume-nonlinear}, we find
the probability of $\mathbb{G}_{\alpha}(x_{1},y_{1})=0$ for some
$(x_{1},y_{1})\in\mathcal{K}_{xy,\delta}$ relative to the ball $\norm{c_{\alpha}}\leq\epsilon^{1/2}<a_{1}$
to be upper bounded by 
\[
\frac{C_{K}}{\epsilon^{2d}}\times\frac{D_{\alpha}!2^{D-1}L^{D-1}\epsilon^{\frac{D-1}{2}}}{\sigma_{\delta}^{D}}.
\]
If $D\geq4d+2$, the limit of this probability as $\epsilon\rightarrow0$
is $0$. Therefore, every $c_{\alpha}^{\ast}$ satisfying $\norm{c_{\alpha}^{\ast}}<a_{0}$
is a Lebesgue point of $\mathcal{A}_{xy,\delta}$, which completes
the proof of the lemma.
\end{proof}
We are now prepared to state and prove the main theorem of this section.
\begin{thm}
Assuming $a_{0}$ and $\phi_{\alpha}$ satisfy the conditions laid
down in section 6 and $D\geq4d+2$, the delay mapping $F_{\alpha}$
is injective on the set $K$ with probability one relative to the
ball $\norm{c_{\alpha}}<a_{0}$.\end{thm}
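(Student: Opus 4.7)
The plan is to assemble the injectivity statement from the probability-one sets produced by Lemmas \ref{lem:15-injectivity-A1}, \ref{lem:17-injectivity-Ak}, and \ref{lem:18-injectivity-xy}, and then argue pointwise. I would fix a sequence $\delta_{n}=2^{-n}$ and define
\[
\mathcal{B}\;=\;\bigcap_{n\geq1}\Bigl(\mathcal{A}_{1,\delta_{n}}\;\cap\;\bigcap_{k=2}^{D}\mathcal{A}_{k,\delta_{n}}\;\cap\;\mathcal{A}_{xy,\delta_{n}}\Bigr).
\]
Each of the countably many factors has probability one relative to the open ball $\norm{c_{\alpha}}<a_{0}$ once $D\geq4d+2$ (the binding constraint comes from Lemma \ref{lem:18-injectivity-xy}), hence $\mathcal{B}$ has probability one. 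It remains to verify that for every $c_{\alpha}\in\mathcal{B}$, the delay map $F_{\alpha}$ is injective on $K$.

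Fix $c_{\alpha}\in\mathcal{B}$ and suppose for contradiction that $x_{1}\neq y_{1}$ in $K$ satisfy $F_{\alpha}(x_{1})=F_{\alpha}(y_{1})$. I would split into four cases based on proximity to fixed points and on whether one of $x_{1},y_{1}$ is an iterate of the other. If both $x_{1}$ and $y_{1}$ are fixed points of $\phi_{\alpha}$, they are distinct fixed points $\xi_{i}(c_{\alpha})\neq\xi_{j}(c_{\alpha})$, and assumption 4 of section 6 forces $\pi_{1}\xi_{i}(c_{\alpha})\neq\pi_{1}\xi_{j}(c_{\alpha})$, contradicting equality of the first components of $F_{\alpha}$. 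If exactly one of $x_{1},y_{1}$ is a fixed point, say $x_{1}=\xi_{j}(c_{\alpha})$, then $\eta=\min_{i}\norm{y_{1}-\xi_{i}(c_{\alpha})}>0$ because there are finitely many fixed points and $y_{1}$ is not one of them; choosing $n$ with $\delta_{n}<\eta/3$ places $(\xi_{j}(c_{\alpha}),y_{1})$ in the regime governed by $\mathcal{A}_{1,\delta_{n}}$, yielding $G_{\alpha}(\xi_{j}(c_{\alpha}),y_{1})\neq0$, a contradiction.

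The two more delicate cases treat pairs neither of which is a fixed point. If $y_{1}=\phi_{\alpha}^{k-1}(x_{1})$ for some $k\in\{2,\ldots,D\}$ (or the symmetric situation), assumption 3 of section 6 guarantees $x_{1}\neq y_{1}$ is consistent with $x_{1}$ not being periodic of period $<2D$; I would then pick $n$ so that $\delta_{n}<\tfrac{1}{3}\min_{i}\norm{x_{1}-\xi_{i}(c_{\alpha})}$ and invoke $\mathcal{A}_{k,\delta_{n}}$ from Lemma \ref{lem:17-injectivity-Ak}. Otherwise, the finitely many distances $\norm{x_{1}-y_{1}}$, $\norm{x_{1}-\xi_{i}(c_{\alpha})}$, $\norm{y_{1}-\xi_{i}(c_{\alpha})}$, $\norm{x_{1}-\phi_{\alpha}^{k-1}(y_{1})}$ and $\norm{y_{1}-\phi_{\alpha}^{k-1}(x_{1})}$ (over $1\leq i\leq m$ and $2\leq k\leq D$) are all strictly positive; taking $\delta_{n}$ smaller than one-third of their minimum places $(x_{1},y_{1})$ in the separation regime defining $\mathcal{A}_{xy,\delta_{n}}$, producing the final contradiction via Lemma \ref{lem:18-injectivity-xy}.

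The main obstacle is not in any individual case but in the bookkeeping: one must be sure that the four cases are exhaustive and that a single $\delta_{n}$ can be chosen to satisfy all the required separation inequalities simultaneously. This is possible precisely because, once overlaps of orbits and coincidences with fixed points are excluded, only finitely many positive quantities need to be dominated, and the sequence $\delta_{n}\downarrow0$ eventually undercuts them all. The hypothesis $D\geq4d+2$ enters only through Lemma \ref{lem:18-injectivity-xy}, where the $C_{K}/\epsilon^{2d}$ cover of $K\times K$ is balanced against the $\epsilon^{(D-1)/2}$ bound produced by the nonlinear transfer-of-volume Lemma \ref{lem:2-transfer-of-volume-nonlinear} applied with rank $D-1$.
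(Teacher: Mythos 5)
Your proposal is correct and takes essentially the same approach as the paper's one-sentence proof, which simply says to combine Lemmas \ref{lem:15-injectivity-A1}, \ref{lem:17-injectivity-Ak}, and \ref{lem:18-injectivity-xy} by letting $\delta\rightarrow0$ through a countable sequence; you have filled in the case analysis (both fixed, one fixed, orbit overlap, generic pair) and the choice of a single $\delta_{n}$ dominating finitely many positive separations, which is exactly the content the paper leaves implicit.
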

\begin{proof}
The proof follows from Lemmas \ref{lem:15-injectivity-A1}, \ref{lem:17-injectivity-Ak},
and \ref{lem:18-injectivity-xy} by taking the limit $\delta\rightarrow0$
through a countable sequence.
\end{proof}

\section{Proof of immersivity}

All the main techniques have been demonstrated in the proof of injectivity
of the delay mapping $F_{\alpha}$. The assumption in section 6 that
$dF_{\alpha}$ is immersive at all fixed points in $K$ simplifies
the proof of immersivity considerably.

Define $\mathcal{A}_{T,\delta}$ as the set of all $c_{\alpha}$ satisfying
$\norm{c_{\alpha}}<a_{0}$ and $F_{\alpha}$ is immersive at all $x_{1}\in K$
satisfying $\norm{x_{1}-\xi_{j}(\alpha)}\geq3\delta$ for $j\in\{1,\ldots,m\}$.
In other words, we are requiring $dF_{\alpha}(x_{1},v_{1})\neq0$
if $(x_{1},v_{1})\in T_{1}K$ and $x_{1}$ is removed from each periodic
point by at least $3\delta$.
\begin{lem}
For $D\geq4d$, every point of $\norm{c_{\alpha}}<a_{0}$ is a Lebesgue
point of $\mathcal{A}_{T,\delta}$ and therefore the probability of
$\mathcal{A}_{T,\delta}$ relative to $\norm{c_{\alpha}}<a_{0}$ is
$1$.\label{lem:20-immersivity}\end{lem}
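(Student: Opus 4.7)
The plan is to adapt the argument of Lemma \ref{lem:18-injectivity-xy} to the tangent bundle, using the Hermite rank result of Lemma \ref{lem:12-H(x1,v1)-rank} in place of the Vandermonde rank result of Lemma \ref{lem:11-V(x1)-V(y1)-rank}. Fix $c_\alpha^\ast$ with $\norm{c_\alpha^\ast}<a_0$ and re-center as in section 6. Choose $a_1>0$ small enough that $\norm{\Sigma_j(\alpha)-\Sigma_j}<\delta$ for each $j$ and $\norm{\tilde{x}_j-x_j}\leq\delta$ for $j=1,\dots,D$ whenever $x_1\in K$ and $\norm{c_\alpha}\leq a_1$. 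Define the compact set
\[
\mathcal{K}_{T,\delta}=\set{(x_1,v_1)\in T_1K : \norm{x_1-\Sigma_j}\geq 2\delta\text{ for each }j\in\set{1,\dots,m}}.
\]

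From the perturbative expansion derived in section 5,
\[
d\mathbb{F}_\alpha(x_1,v_1)=d\mathbb{F}_0(x_1,v_1)+\binom{0}{\mathbb{H}(x_1,v_1)}(c_\alpha)+\mathcal{O}(c_\alpha^2),
\]
I would argue first that the $\Phi$-iterates $x_1,\dots,x_{D-1}$ are distinct for every $(x_1,v_1)\in\mathcal{K}_{T,\delta}$. Indeed, any coincidence $x_i=x_j$ with $i<j\leq D-1$ would force $x_1$ to be periodic of period at most $D-2<2D$; by assumption (3) of section 6 such a periodic point must be a fixed point of $\Phi$, contradicting $\norm{x_1-\Sigma_k}\geq 2\delta$ for all $k$. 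Lemma \ref{lem:12-H(x1,v1)-rank} then gives that $\mathbb{H}(x_1,v_1)$ has rank $D-1$, and hence so does $\mathcal{M}:=\binom{0}{\mathbb{H}(x_1,v_1)}$. By continuity and compactness, $\sigma_\delta:=\min\sigma_{D-1}(\mathcal{M})$, taken over $(x_1,v_1)\in\mathcal{K}_{T,\delta}$ and $\norm{c_\alpha}\leq a_1$, is strictly positive.

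I then cover $\mathcal{K}_{T,\delta}$ by $C_K/\epsilon^{2d-1}$ $\epsilon$-balls using assumption (3) about $C_K$, and extend the assumption about $L$ to bound the Lipshitz constant of $d\mathbb{F}_\alpha$ in $(x_1,v_1)$ and the Taylor remainder by $L\norm{c_\alpha}^2$. If $d\mathbb{F}_\alpha(x_1,v_1)=0$ for some $(x_1,v_1)\in\mathcal{K}_{T,\delta}$, then $\norm{d\mathbb{F}_\alpha(x_1,v_1)}\leq L\epsilon$ at the center of one of the covering balls. Applying the nonlinear transfer of volume Lemma \ref{lem:2-transfer-of-volume-nonlinear} with $\mathfrak{r}\leftarrow D-1$ and $\sigma\leftarrow\sigma_\delta$, the probability of such an event relative to $\norm{c_\alpha}\leq\epsilon^{1/2}<a_1$ is bounded by
\[
\frac{C_K}{\epsilon^{2d-1}}\times\frac{D_\alpha!\,2^{D-1}L^{D-1}\epsilon^{(D-1)/2}}{\sigma_\delta^{D-1}}.
\]
The exponent on $\epsilon$ is $(D-1)/2-(2d-1)$, which is strictly positive precisely when $D\geq 4d$. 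The bound therefore vanishes as $\epsilon\to 0$, showing $c_\alpha^\ast$ is a Lebesgue point of $\mathcal{A}_{T,\delta}$; Lemma \ref{lem:14-Lebesgue-points} then delivers the stated probability-one conclusion.

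The only step requiring real thought is the distinctness of the $\Phi$-iterates on $\mathcal{K}_{T,\delta}$, which is where assumption (3) of section 6 enters; the rest is a mechanical instantiation of the Lebesgue-point framework used in Lemmas \ref{lem:15-injectivity-A1}--\ref{lem:18-injectivity-xy}.
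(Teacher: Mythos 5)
Your proof matches the paper's argument step for step: same re-centering at $c_\alpha^\ast$, same definition of $\mathcal{K}_{T,\delta}$, same use of Lemma \ref{lem:12-H(x1,v1)-rank} to get rank $D-1$ for $\mathcal{N}=\bigl(\begin{smallmatrix}0\\\mathbb{H}\end{smallmatrix}\bigr)$, same covering count $C_K/\epsilon^{2d-1}$, same application of Lemma \ref{lem:2-transfer-of-volume-nonlinear}, and the same exponent bookkeeping $(D-1)/2-(2d-1)>0\iff D\geq 4d$. The one place you go beyond the paper is in explicitly verifying the hypothesis of Lemma \ref{lem:12-H(x1,v1)-rank} — that $x_1,\dots,x_{D-1}$ are distinct on $\mathcal{K}_{T,\delta}$ via assumption (3) of section 6 and invertibility of $\Phi$ — a detail the paper leaves implicit; this is a genuine and worthwhile clarification, but it does not change the route.
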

\begin{proof}
We center at $c_{\alpha}^{\ast}$ satisfying $\norm{c_{\alpha}^{\ast}}<a_{0}$
as before. Again as before, we assume $a_{1}$ to be so small that
$\norm{\Sigma_{j}(c_{\alpha})-\Sigma_{j}}<\delta$ for $\norm{c_{\alpha}}\leq a_{1}$.

Define $\mathcal{K}_{T,\delta}$ to be the set of all $(x_{1},v_{1})\in T_{1}K$
satisfying $\norm{x_{1}-\Sigma_{j}}\geq2\delta$ for $j\in\{1,\ldots,m\}$.
Then 
\begin{equation}
d\mathbb{F}_{\alpha}(x_{1},v_{1})=d\mathbb{F}_{0}(x_{1},v_{1})+\mathcal{N}(c_{\alpha})+\mathcal{O}(c_{\alpha}^{2})\label{eq:lem20-1}
\end{equation}
with 
\[
\mathcal{N}=\left(\begin{array}{c}
0\\
\mathbb{H}(x_{1},v_{1})
\end{array}\right).
\]
By Lemma \ref{lem:12-H(x1,v1)-rank}, the rank of $\mathcal{N}$ is
$D-1$. 

Define $\sigma_{\delta}=\min\sigma_{D-1}(\mathcal{N})$, where the
minimum is taken over $(x_{1},v_{1})\in\mathcal{K}_{T,\delta}$ and
$\norm{c_{\alpha}}\leq a_{1}$. Cover $\mathcal{K}_{T,\delta}$ with
$C_{K}/\epsilon^{2d-1}$ $\epsilon$-balls. 
\begin{description}
\item [{Assumption}] about $L$ (5): In (\ref{eq:lem20-1}), the $\mathcal{O}(c_{\alpha}^{2})$
term is upper bounded by $L\norm{c_{\alpha}}^{2}$. Here, we are effectively
assuming a bound on the third derivative of $F_{\alpha}(x_{1})$ with
respect to $x_{1}$ over the compact sets $x_{1}\in T_{1}K$ and $\norm{c_{\alpha}}\leq a_{0}$.
\end{description}

If $d\mathbb{F}_{\alpha}(x_{1},v_{1})=0$ for some $(x_{1},v_{1})\in\mathcal{K}_{T,\delta}$,
then we must have $\norm{d\mathbb{F}_{\alpha}(x_{1},v_{1})}$ for
some $(x_{1},v_{1})$ that is at the center of one of the $\epsilon$-balls
covering $\mathcal{K}_{T,\delta}$. The nonlinear transfer of volume
lemma \ref{lem:2-transfer-of-volume-nonlinear} implies that the probability
of $d\mathbb{F}_{\alpha}(x_{1},v_{1})=0$ for some $(x_{1},v_{1})\in\mathcal{K}_{T,\delta}$
relative to $\norm{c_{\alpha}}\leq\epsilon^{1/2}<a_{1}$ is upper
bounded by 
\[
\frac{C_{K}}{\epsilon^{2d-1}}\times\frac{D_{\alpha}!2^{D-1}L^{D-1}\epsilon^{\frac{D-1}{2}}}{\sigma_{\delta}^{D-1}}.
\]
If $D\geq4d$, this probability goes to zero as $\epsilon\rightarrow0$.
Therefore, every $\norm{c_{\alpha}^{\ast}}<a_{0}$ is a Lebesgue point
of $\mathcal{A}_{T,\delta}$, proving the lemma.
\end{proof}
We are now prepared to state and prove the immersivity theorem. 
\begin{thm}
Suppose $a_{0}$ and $\phi_{\alpha}$ satisfy the assumptions laid
down in section 6 and suppose $D\geq2d$. The delay map $F_{\alpha}$
is then immersive at every point of $K$ with probability $1$ relative
to the ball $\norm{c_{\alpha}}<a_{0}$.\end{thm}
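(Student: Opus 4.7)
The plan is to combine Lemma \ref{lem:20-immersivity} with assumption (5) of section 6 via a continuity argument. Lemma \ref{lem:20-immersivity} delivers immersivity outside a $3\delta$-neighborhood of each fixed point with probability one (for $D\geq 4d$, as required by that lemma), while the section 6 hypothesis asserts immersivity at the fixed points themselves. A continuity argument fills in the gap between these two regimes.

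First, I would form the countable intersection
\[
\mathcal{A}=\bigcap_{k=1}^{\infty}\mathcal{A}_{T,1/k},
\]
which has probability one relative to $\norm{c_{\alpha}}<a_{0}$ since each $\mathcal{A}_{T,1/k}$ does. For any $c_{\alpha}\in\mathcal{A}$ and any $(x_{1},v_{1})\in T_{1}K$ with $x_{1}\neq\xi_{j}(c_{\alpha})$ for every $j\in\{1,\ldots,m\}$, there exists $k$ with $\norm{x_{1}-\xi_{j}(c_{\alpha})}\geq 3/k$ for all $j$, and membership in $\mathcal{A}_{T,1/k}$ then yields $dF_{\alpha}(x_{1},v_{1})\neq 0$. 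Thus $F_{\alpha}$ is immersive on $K$ away from its fixed points.

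Next, fix $c_{\alpha}\in\mathcal{A}$ and a fixed point $\xi_{j}(c_{\alpha})$. By assumption (5) of section 6, $dF_{\alpha}(\xi_{j}(c_{\alpha}),v_{1})\neq 0$ for every unit tangent vector $v_{1}$ at $\xi_{j}(c_{\alpha})$. The map $(x_{1},v_{1})\mapsto dF_{\alpha}(x_{1},v_{1})$ is continuous on the compact space $T_{1}K$, and the fiber over $\xi_{j}(c_{\alpha})$ is compact; hence by a standard compactness argument there is an open neighborhood $U_{j}\subset T_{1}K$ of this fiber on which $dF_{\alpha}\neq 0$. The union $U_{1}\cup\cdots\cup U_{m}$ contains an open set of the form $\{(x_{1},v_{1})\in T_{1}K:\norm{x_{1}-\xi_{j}(c_{\alpha})}<3\delta_{\ast}\text{ for some }j\}$ for some $\delta_{\ast}>0$, and the complement is covered by $\mathcal{A}_{T,1/k}$ for any $k$ with $1/k<\delta_{\ast}$. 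Together these give immersivity on all of $T_{1}K$, hence $F_{\alpha}$ is immersive on $K$.

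The main obstacle is conceptual rather than technical: the continuity neighborhood $\delta_{\ast}=\delta_{\ast}(c_{\alpha})$ depends on $c_{\alpha}$, so one must resist the temptation to seek a single $\delta$ uniform in $c_{\alpha}$ and work pointwise within the probability-one set $\mathcal{A}$ instead. The countable intersection construction is what enables this, because any pointwise choice of $k$ with $1/k<\delta_{\ast}(c_{\alpha})$ suffices. Routine but important to check is that the assumption (5) of section 6, ``$dF_{\alpha}$ is immersive at each fixed point,'' is meant to hold for every $c_{\alpha}$ in the parameter ball and not merely at $c_{\alpha}=0$, so that the continuity step applies to an arbitrary $c_{\alpha}\in\mathcal{A}$.
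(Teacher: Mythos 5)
Your proof is correct and takes essentially the same approach as the paper's one-line argument: form the countable intersection of $\mathcal{A}_{T,1/k}$ from Lemma \ref{lem:20-immersivity} and invoke the section 6 hypothesis that $dF_{\alpha}$ is immersive at fixed points. Two minor remarks. First, your continuity/open-neighborhood paragraph around each fixed point is superfluous: once the countable intersection gives $dF_{\alpha}(x_{1},v_{1})\neq 0$ pointwise for every non-fixed $x_{1}$ and assumption (5) covers the fixed points themselves, you already have immersivity on all of $T_{1}K$, so there is nothing left to fill in by continuity. Second, you correctly flag (parenthetically) that Lemma \ref{lem:20-immersivity} requires $D\geq 4d$ whereas the theorem statement reads $D\geq 2d$; the theorem hypothesis is evidently a misprint and should read $D\geq 4d$ for the appeal to Lemma \ref{lem:20-immersivity} to be valid.
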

\begin{proof}
The proof follows by taking $\delta\rightarrow0$ through a countable
sequence in the previous Lemma \ref{lem:20-immersivity} and using
the assumption made in section 6 about immersivity at fixed points.
\end{proof}

\section{Discussion}

The delay map may be viewed in light of the Whitney embedding theorem
\cite{Hirsch1976}. However, it has some characteristics of its own.
One of these is the possibility that orbits of two distinct points
can overlap. There are other distinctive characteristics related to
periodic orbits and eigenvectors.

In this article, we showed how to prove that the delay map is an embedding
using the concept of Lebesgue points. For the delay map $F_{\alpha}(x)$
with $o=\pi_{1}$ to be an embedding with probability $1$ relative
to the ball $\norm{c_{\alpha}}<1$, we require the embedding dimension
to satisfy $D\geq4d+2$.

We conjecture that the delay mapping is an embedding for $D\geq2d+1$.
The more restrictive $4d+2$ requirement comes in when applying the
nonlinear transfer of volume lemma. The extra dimensions are used
to absorb the effect of the nonlinear term. Some evidence for this
conjecture may be found in our earlier work \cite{NavarreteViswanath17}.

In our opinion, it would be desirable to obtain prevalence versions
of classical theorems such as the Kupka-Smale theorem \cite{Robinson1998}.
The differential topology proofs rely heavily on the bump function
and genericity is weaker than almost sureness in probability. It is
hoped that the technique based on Lebesgue points introduced here
will be useful in that regard.

\bibliographystyle{plain}
\bibliography{references}

\begin{thebibliography}{10}

\bibitem{Aeyels1980}
D.~Aeyels.
\newblock Generic observability of differentiable systems.
\newblock {\em SIAM Journal on Control and Optimization}, 19(5):595--603, 1981.

\bibitem{Hirsch1976}
M.~W. Hirsch.
\newblock {\em Differential Topology}.
\newblock Springer, 1976.

\bibitem{Madych2006}
W.R. Madych.
\newblock An estimate for multivariate interpolation {II}.
\newblock {\em Journal of Approximation Theory}, 142:116--128, 2006.

\bibitem{NavarreteViswanath17}
R.~Navarrete and D.~Viswanath.
\newblock Delay embedding of periodic orbits using a fixed observation
  function.
\newblock {\em arxiv.org}, 2017.

\bibitem{Robinson1998}
C.~Robinson.
\newblock {\em Dynamical Systems: Stability, Symbolic Dynamics, and Chaos}.
\newblock CRC Press, 1998.

\bibitem{Robinson2005}
J.~C. Robinson.
\newblock A topological delay embedding theorem for infinite-dimensional
  dynamical systems.
\newblock {\em Nonlinearity}, 18(5):2135--2143, 2005.

\bibitem{Robinson2011}
J.C. Robinson.
\newblock {\em Dimensions, Embeddings, and Attractors}.
\newblock Cambridge, 2011.

\bibitem{Rudin1986}
W.~Rudin.
\newblock {\em Real and Complex Analysis}.
\newblock McGraw-Hill, 1986.

\bibitem{SauerYorkeCasdagli1991}
T.~Sauer, J.~A. Yorke, and M.~Casdagli.
\newblock Embedology.
\newblock {\em Journal of Statistical Physics}, 65(3):579--616, 1991.

\bibitem{Takens1981}
F.~Takens.
\newblock Detecting strange attractors in turbulence.
\newblock {\em Lecture Notes in Mathematics}, 898(1):366--381, 1981.

\bibitem{TrefethenBau1997}
L.N. Trefethen and D.~Bau III.
\newblock {\em Numerical Linear Algebra}.
\newblock SIAM, 1997.

\end{thebibliography}

\end{document}